\newtheorem{assumption}[theorem]{Assumption}
\newcommand*{\N}{\ensuremath{\mathbb{N}}}
\newcommand*{\R}{\ensuremath{\mathbb{R}}}
\newcommand*{\Z}{\ensuremath{\mathbb{Z}}}
\newcommand*{\C}{\ensuremath{\mathbb{C}}}
\newcommand{\A}{{\mathcal{A}}}
\newcommand{\UC}{\mathbb{S}^1}
\newcommand{\p}{{per}}
\renewcommand{\i}{\mathrm{i}}
\newcommand{\grad}{\nabla}
\newcommand{\K}{{\mathcal{K}}}
\newcommand{\B}{{\mathcal{B}}}
\newcommand{\T}{{\mathcal{T}}}
\renewcommand{\d}[1]{\,\mathrm{d}#1 \,}
\newcommand{\F}{\mathcal{F}} % Fourier transform
\newcommand{\I}{{\mathcal{I}}}
\newcommand{\high}[1]{{\color{black}{#1}}}
\begin{document}

\title{Numerical methods for scattering problems in periodic waveguides%\thanks{Grants or other notes
%about the article that should go on the front page should be
%placed here. General acknowledgments should be placed at the end of the article.}
}

\titlerunning{Numerical methods periodic waveguides}        % if too long for running head

\author{Ruming Zhang
}

%\authorrunning{Short form of author list} % if too long for running head

\institute{R. Zhang \at
              Institute of Applied and Numerical mathematics, Karlsruhe Institute of Technology, Karlsruhe, Germany \\
              Tel.: +49-721-608-42062\\
              Fax: +49-721-608-43197\\
              \email{ruming.zhang@kit.edu}           %  \\
%             \emph{Present address:} of F. Author  %  if needed
}

\date{Received: date / Accepted: date}
% The correct dates will be entered by the editor

\maketitle

\begin{abstract}
In this paper, we propose new numerical methods for scattering problems in periodic waveguides. As well-posedness is not always guaranteed, we are looking for solutions obtained via the { Limiting Absorption Principle (LAP)}, which are called LAP solutions. The method is based on a newly established contour integral representation of the LAP solutions.  Based on the Floquet-Bloch transform and analytic Fredholm theory, when the wavenumber satisfies certain conditions, the LAP solution can be written as an integral of quasi-periodic solutions on a contour. The definition of the contour depends on both the wavenumber and the periodic structure.  Compared with other numerical methods, we do not need the LAP process during numerical approximations, thus a standard error estimation is easily carried out. Based on this method, we also develop a numerical solver for  halfguide problems. The method is based on the result that any LAP solution of a halfguide problem can be extended into the LAP solution of a fullguide problem. We first approximate the source term from the boundary data by a regularization method, and then the LAP solution is computed from the corresponding fullguide problem. The method is also extended to more general wavenumbers with an interpolation technique. At the end of this paper, we also give some numerical results to show the efficiency of our numerical methods.
\keywords{scattering problems \and periodic waveguide \and limiting absorption principle \and curve integration \and finite element method}
% \PACS{PACS code1 \and PACS code2 \and more}
\subclass{35P25 \and 35A35 \and 65M60}
\end{abstract}

\section{Introduction}
\label{intro}

Numerical simulation for scattering problems in periodic waveguides is an interesting topic in both mathematics and related technologies, due to its wide applications in optics, nanotechnology, etc. As the well-posedness of the scattering problems is not always true, the {\em Limiting Absorption Principle (LAP)} is commonly used to find out the ``physically meaningful'' solution. That is, the ``physically meaningful \high{solution}'' is assumed to be the limit of a family of solutions with absorbing material, when the positive absorption parameter tends to $0$. In this paper, the limit is called the {\em LAP solution}.  It has been  proved that LAP solutions exist for planar waveguides filled up with periodic material, and we refer to \cite{Hoang2011,Fliss2015,Kirsc2017,Kirsc2017a} for different proofs.

In recent years, some numerical methods have been developed to solve this kind of problems based on the LAP. 
As the problem with absorbing medium is uniquely solvable and the unique solution decays exponentially along the periodic waveguide, the solution is easily approximated by problems defined in bounded domains and then computed by standard numerical methods. We refer to \high{\cite{Ehrhardt2009,Yuan2006}} for a numerical method with the help of a so-called {\em recursive doubling process} to approximate Robin-to-Robin maps on left-and right-boundaries of a  periodic cell, and the original solution is approximated by the one with a sufficiently small positive absorption parameter. Based on this idea, the solution is also approximated by an extrapolation technique with respect to small positive absorption parameters in \cite{Ehrhardt2009a}, and the method is also  extended to  scattering problems in locally perturbed periodic layers in \cite{Sun2009}. On the other hand, the LAP has also been applied to pick up ``proper'' propagation modes. With the asymptotic behaviour of propagating modes when the absorption parameter tends to $0$, it is possible to determine whether certain modes propagate to the left or to the right. With these propagating modes, Dirichlet-to-Neumann maps on the boundaries of periodic cells are approximated and the solution for the whole problem is computed. For details we refer to \cite{Joly2006,Fliss2009a}. For other numerical methods we also refer to \cite{Lecam2007,Alcaz2013,Dohna2018}.

Recently, the Floquet-Bloch transform has been applied to both theoretical analysis and numerical simulation for scattering problems in periodic structures. We refer to \cite{Fliss2012,Coatl2012,Hadda2016} for scattering problems with (perturbed) periodic media, to \cite{Lechl2016,Lechl2016a,Lechl2017,Zhang2017e} for problems with periodic surfaces, and to \cite{Fliss2015,Kirsc2017,Kirsc2017a} for periodic waveguides. From the Floquet-Bloch theory, the unique solution of the periodic waveguide problem with absorption is written as a contour integral on the unit circle, where the integrand is a family of quasi-periodic solutions depending analytically on the quasi-periodicities. When the absorbing parameter tends to $0$, the integrand may become irregular as 
poles may approach the unit circle during this process. Based on \cite{Hoang2011,Joly2006,Fliss2009a}, we replace the unit circle by a small modification of it. For the choice of the new curve, we require that the integral does not change for any sufficiently small absorbing parameter, and the integrand does not have any poles on the new curve when the parameter is $0$. We also show that when the wavenumber satisfies certain conditions, such a curve always exists.  Luckily, we have proved that the wavenumbers such that the conditions are not satisfied compose a discrete subset of $\R_+$. Thus for any positive wavenumber except for this discrete set, we can write the LAP solution as a contour integral on a closed curve, where the integrand is a family of quasi-periodic solutions of cell problems. As the quasi-periodic cell problems are classical, the numerical simulation of LAP solutions is also easily carried out. The curve can be easily chosen as a piecewise analytic one, as there are only finite number of eigenvalues on the unit circle. Finally, as the solution depends piecewise analytically on the curve, a high order numerical method is designed based on the contour integral. Moreover, we can also extend this method to more general wavenumbers, with an interpolation technique inspired by the paper \cite{Ehrhardt2009a}.

The numerical method is also extended to  halfguide problems. From \cite{Zhang2019a}, any LAP solution of a halfguide problem is (not uniquely) extended to an LAP solution of a fullguide problem. Thus the numerical method can be designed as two steps. The first step is to find out the source term for the fullguide problems such that its solution approximates that of the halfguide problem, for given boundary data. We compute the boundary values for basis functions in the space where the source term lies in, and then find out the corresponding coefficients by the least square method that approximates the boundary data. As the choice of source terms is not unique and the problem is severely ill-posed, a Tikhonov regularization technique is adopted. With this source term, we go to the second step, i.e., to solve the fullguide problem with the method developed in this paper and approximate the solution of the halfguide problem. Moreover, the methods can also be extended to more general wavenumbers in the same way.

The rest of this paper is organized as follows. We recall the mathematical model of the scattering problem in the second section, and introduce the Floquet-Bloch theory in the third section. Then we consider the quasi-periodic cell problems in the fourth section. In Section 5 and 6, we apply the Floquet-Bloch transform to obtain a simplified integral representation of the LAP solutions. In Section 7, we develop a numerical method to compute the numerical solutions, based on the integral representation. Then we extend the method to halfguide problems in Section 8 and more general wavenumbers in Section 9. In the last section, we present numerical results to show the efficiency of our algorithms.

\section{The mathematical model of direct scattering problems}
\label{sec:math_model}

Let $\Omega=\R\times\Sigma\subset\R^2$ be a closed waveguide, where $\Sigma\subset\R$ is an interval in $\R$ (see Figure \ref{waveguide}). In this paper, we set $\Sigma=(0,1)$ for simplicity. So the boundary of $\Omega$ is
\[\partial\Omega=\{x\in\R^2:\,x_2=0\text{ or }x_2=1\},\]
and it is assumed to be impenetrable. 
Suppose $\Omega$ is filled up with periodic material with a real-valued refractive index $q$ \high{satisfying} the following conditions:
\begin{equation*}
 q(x_1+1,x_2)=q(x_1,x_2),\quad q\geq c_0>0 \quad\forall\,x\in\Omega.
\end{equation*}
\high{Moreover, we require that $q\in L^2_{loc}(\Omega)$.}

\begin{figure}[ht]
\centering
\includegraphics[width=12cm,height=2.5cm]{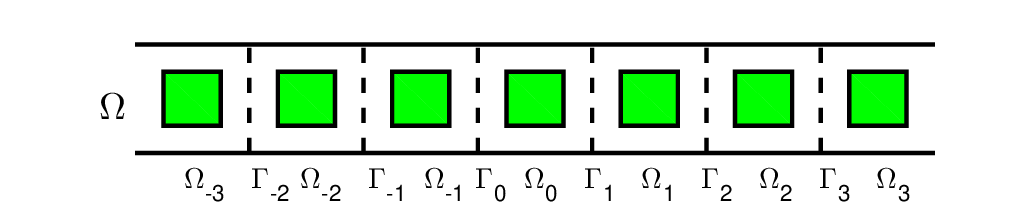}
\caption{Periodic waveguide.}
\label{waveguide}
\end{figure}

The scattering problem is modelled by the following equations:
\begin{eqnarray}
 \Delta u+k^2 q u&=&f\quad\text{ in }\Omega;\label{eq:wg1}\\
 \frac{\partial u}{\partial x_2}&=&0\quad\text{ on }\partial\, \Omega,\label{eq:wg2}
\end{eqnarray}
where  $f$ is a function in $L^2(\Omega)$ with compact support, \high{and $k>0$ is a real and positive wavenumber}.
\begin{remark}
 We can also consider different boundary conditions on $\partial\,\Omega$, e.g., the Dirichlet boundary condition or the Robin boundary condition, in the similar way. In this paper, we only want to take the Neumann boundary condition as an example for our methods.
\end{remark}

% \begin{remark}
% The method in this paper may be extended to more generalized cases, such as more complicated periodic geometric structures or elliptic partial differential equations with periodic coefficients.  Here we only use the settings as an example to show how the method works.
% \end{remark}

To find a  ``physically meaningful'' solution of  \eqref{eq:wg1}-\eqref{eq:wg2}, we introduce the well-known Limiting Absorption Principle (LAP). That is,  given any $\epsilon>0$, consider the following {\em damped Helmholtz equation}:
\begin{eqnarray}\label{eq:wg_ep1}
\Delta u_\epsilon+(k^2+\i\epsilon)q u_\epsilon&=&f\quad\text{ in }\Omega;\\\label{eq:wg_ep2}
\frac{\partial u_\epsilon}{\partial\high{x_2}}&=&0\quad\text{ on }\partial\,\Omega.
\end{eqnarray}
From the Lax-Milgram theorem, the problem is uniquely solvable in $H^1(\Omega)$. Moreover, the solution $u_\epsilon$ decays exponentially as $x_1\rightarrow\pm\infty$ (see \cite{Joly2006}).  The LAP assumes that $u_\epsilon$ converges in $H^1_{loc}(\Omega)$ when $\epsilon\rightarrow 0^+$, and the limit, denoted by $u$, is the  ``physically meaningful'' LAP solution. In the following parts, we introduce a new formulation for LAP solutions. Based on the new formulation, we introduce new numerical methods to compute the LAP solutions efficiently.

For simplicity, we introduce  the following operator:
\begin{equation*}
\A u:=-q^{-1} \Delta u
\high{\text{ in } D(\A,\Omega):=\Big\{u\in H^1(\Omega):\,\Delta u\in L^2(\Omega),\,\left.\frac{\partial u}{\partial x_2}\right|_{\partial\,\Omega}=0\Big\}}.
\end{equation*}
Let the spectrum of $\A$ be denoted by $\sigma(\A)$, then the problem \eqref{eq:wg1}-\eqref{eq:wg2} is  uniquely solvable in $H^1(\Omega)$ if and only if $k^2\notin\sigma(\A)$. To study the spectrum property of $\A$ which plays an important role in the well-posedness of the problem \eqref{eq:wg1}-\eqref{eq:wg2}, we introduce the Floquet-Bloch theory. For details we refer to \cite{Joly2006,Fliss2015} and for more general results we refer to \cite{Kuchm1993}.

\section{Floquet-Bloch theory and quasi-periodic problems}
\label{sec:f-b-theory}

\subsection{Floquet-Bloch theory}
\label{sec:f-b-theory}

In this section, we introduce the Floquet-Bloch theory to study the spectrum $\sigma(A)$. For simplicity, we introduce the following domains (see Figure \ref{waveguide}):
\[ \Omega_j:= (-1/2+j,1/2+j]\times\Sigma,\quad \Gamma_j=\{-1/2+j\}\times \Sigma,\quad j\in\Z.\]
 Then $\Omega=\bigcup_{j\in\Z}\Omega_j$ with the left and right boundaries $\Gamma_j$ and $\Gamma_{j+1}$. Let 
 \[\partial\,\Omega_j:=\partial\,\Omega\cap\overline{\Omega_j}=\Big\{x\in\R^2:\,-1/2+j<x_1\leq 1/2+j,\,x_2=0\text{ or }1\Big\}. \]

We also introduce the space of quasi-periodic functions.  A function $\phi\in H^1_{loc}(\Omega)$ is called $z$-quasi-periodic, if it satisfies
\begin{equation}\label{eq:z_quasi}
\left.\phi\right|_{\Gamma_{j+1}}=z\, \left.\phi\right|_{\Gamma_j},\quad  \left.\frac{\partial \phi}{\partial x_1}\right|_{\Gamma_{j+1}}=z\left.\frac{\partial \phi}{\partial x_1}\right|_{\Gamma_{j}},\quad \forall j\in\Z
\end{equation}
for some fixed complex number $z\in\C$. We define the subspace of $H^1(\Omega_0)$ by:
\begin{equation*}
H_z^1(\Omega_0):=\Big\{\phi\in H^1(\Omega_0):\,\phi\text{ satisfies \eqref{eq:z_quasi} for $j=0$}\Big\}.
\end{equation*}
\high{Then the functions in $H_z^1(\Omega_0)$ can be extended to $z$-quasi-periodic functions.}   
Especially, when $z=1$, all functions in $H^1_1(\Omega_0)$ can be extended into  periodic functions in $H^1_{loc}(\Omega)$. We also denote $H^1_1(\Omega_0)$ by $H^1_\p(\Omega_0)$.

From the Floquet-Bloch theory, the spectrum of $\A$ is closely related to {\em Bloch wave solutions}. \high{A Bloch wave solution is a non-trivial $z$-quasi-periodic solution of \eqref{eq:wg1}-\eqref{eq:wg2} in $H^1_{loc}(\Omega)$ with $f=0$ for some $z\in\C$.}  If a Bloch wave solution exists in $H^1_z(\Omega_0)$,  $z$ is called a {\em Floquet multiplier}.  Define the operator:
\begin{equation}
\A_z u=-q^{-1}\,\Delta u\,\text{ \high{with domain} }D_z(\A,\Omega_0):=D(\A,\Omega_0)\cap H_z^1(\Omega_0),
\end{equation}
where $D(\A,\Omega_0)$ is defined in the same way as $D(\A,\Omega)$, and $\Omega$ is replaced by the periodic cell $\Omega_0$. Moreover, $\A_z$ is self-adjoint with respect to the $L^2$-space equipped with the weighted inner product $(\phi,\psi)_{L^2,q}=\int_{\Omega_0}q\phi\overline{\psi}\d x$. Let $\sigma(\A_z)$ be the spectrum of $\A_z$, then $k^2\in\sigma(A_z)$ if and only if $z$ is a Floquet multiplier.% and there is a Bloch wave solution with respect to the Floquet multiplier $z$.

 Let $\mathbb F(k^2)$ be the collection of all Floquet multipliers with  wavenumber $k$ and  ${\mathbb U\mathbb F}(k^2):=\mathbb{F}(k^2)\cap\UC$ ($\UC$ is the unit circle in $\C$) be the set of all {\em unit Floquet multipliers}. In this paper, when the wavenumber is fixed, we write $\mathbb{F}$ instead of $\mathbb F(k^2)$ for simplicity. We list the properties of the Floquet multipliers from \cite{Zhang2019a}, for more details we refer to \cite{Kuchm1993,Joly2006,Ehrhardt2009,Fliss2015,Kuchm2016}:
\begin{itemize}
\item $\mathbb{UF}$ has at most finite number of elements.
\item $z\in{\mathbb F}$ if and only if $z^{-1}\in{\mathbb F}$, thus $z\in{\mathbb {UF}}$ if and only if $\overline{z}=z^{-1}\in{\mathbb {UF}}$. 
\item  $\mathbb{F}$ is a discrete set and the only finite  accumulation point of $\mathbb{F}$ can be $0$.
\item $\mathbb F(k^2)$ depends continuously on $k^2$.
\end{itemize}

A classical result from the Floquet-Bloch theory also shows that (see \cite{Kuchm1993}):
\begin{equation}\label{eq:A_sig_A}
\sigma(\A)=\bigcup_{|z|=1}\sigma(\A_z).
\end{equation}
Thus, it is particularly important to study the spectrum of $\A_z$ when $|z|=1$. 
For simplicity, let $\alpha=-\i\log(z)$ where $\alpha\in(-\pi,\pi]$. We  replace $\A_z$ by $\A_\alpha$ in the rest of this section, by abuse of notation. Then \eqref{eq:z_quasi} becomes
\begin{equation}
\label{eq:alpha_quasi}
\left.u\right|_{\Gamma_{j+1}}=\exp(\i\alpha) \left.u\right|_{\Gamma_j},\quad  \left.\frac{\partial u}{\partial x_1}\right|_{\Gamma_{j+1}}=\exp(\i\alpha) \left.\frac{\partial u}{\partial x_1}\right|_{\Gamma_{j}},\quad \forall j\in\Z.
\end{equation}
Denote the spectrum of $\A_\alpha$ by $\sigma(\A_\alpha)$. As $\A_\alpha$ is self-adjoint, $\sigma(\A_\alpha)$ is a discrete subset of $(0,\infty)$.  By rearranging the order of the points in $\sigma(\A_\alpha)$ properly, we obtain a family of analytic functions $\{\mu_n(\alpha):\,n\in\N\}$ and $\{\psi_n(\cdot,\alpha):\,n\in\N\}$:
 \begin{equation*}
\A_\alpha\psi_n(\cdot,\alpha)=\mu_n(\alpha)\psi_n(\cdot,\alpha),\quad \sigma(\A_\alpha)=\bigcup_{n\in\N}\{\mu_n(\alpha)\}.
 \end{equation*}
Note that the analytic functions in normed spaces are defined as follows.
\begin{definition}
	Suppose the function $\phi(z,x)$ satisfies $\phi(z,\cdot)\in X$ for any fixed $z$, for some normed space $X$. Then $\phi$ depends analytically on $z$ in an open domain $U\subset\C$ if for any fixed $z_0\in U$, there exist $\phi_\ell\in X$ such that
	\begin{equation*}
		\phi(z,x)=\sum_{\ell=0}^\infty (z-z_0)^\ell\phi_\ell
	\end{equation*}
	converges uniformly in $B(z_0,\delta)$ for a sufficiently small $\delta>0$ with respect to the norm of $X$.
\end{definition}

 Thus $\sigma(\A)=\bigcup_{n\in\N}\bigcup_{\alpha\in(-\pi,\pi]}\big\{\mu_n(\alpha)\big\}$.  Both $\mu_n(\alpha)$ and $\psi_n(\cdot,\alpha)$ are extended into analytic functions in $\alpha$ in a sufficiently small neighbourhood of $(-\pi,\pi)\times\{0\}$. %From the arguments in Section 3.3 in \cite{Fliss2015}, $\mu_n$ can be a constant function, i.e., $\mu_n(\alpha)=\mu$ if and only if the point spectrum of $A$ is not empty. To avoid this situation, we make the following assumption:
% \begin{assumption}
%  \label{asp0}
%  The point spectrum, denoted by $\sigma_p(A)$, is empty.
% \end{assumption}
%When Assumption \ref{asp0} holds, $\mu_n$ is not a constant function. As it is analytic in $[-\pi,\pi]$, there are only finite number of $\alpha's$ such that $\mu_n(\alpha)=k^2$. Then we have a further property of the set $\mathbb{UF}$.
%\begin{itemize}
% \item $\mathbb{UF}$ is a finite set.
%\end{itemize}

For any fixed $n\in\N$, the graph $\{(\alpha,\mu_n(\alpha)):\,\alpha\in(-\pi,\pi]\}$ is called a dispersion curve, and  all  dispersion curves compose a dispersion diagram. Following \cite{Ehrhardt2009a,Ehrhardt2009}, we first show the dispersion diagrams for two different examples:
\begin{enumerate}
\item Example 1. $q=1$ is a constant function, and its dispersion diagram is shown in Figure \ref{fig:dd} (left). The dispersion curve is given analytically:
\begin{equation*}
\mu_{jm}(\alpha)=j^2\pi^2+(\alpha+2\pi m)^2,\quad j\in\N,\,m\in\Z.
\end{equation*}
\item Example 2. $q=9$ in a disk $B\big((0,0.5),0.3\big)$ and $q=1$ outside the disk, and its dispersion diagram is shown in Figure \ref{fig:dd} (right).. %Then $q$ is extended $1$-periodically in $x_1$-direction. In this case, the functions $\mu_n$ can no longer be obtained analytically. We compute the eigenvalues with the help of the finite element method and the dispersion diagram is shown in Figure \ref{fig:dd} (right). The calculation is carried out by the finite element method based on regular triangle meshes  with the mesh size $0.005$.
\end{enumerate}

\begin{figure}[ht]
\centering
\begin{tabular}{c  c}
\includegraphics[width=0.45\textwidth]{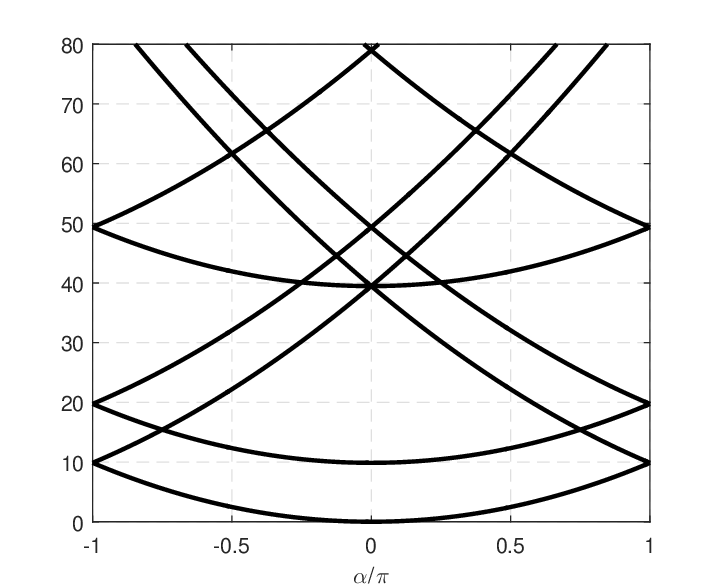} 
& \includegraphics[width=0.45\textwidth]{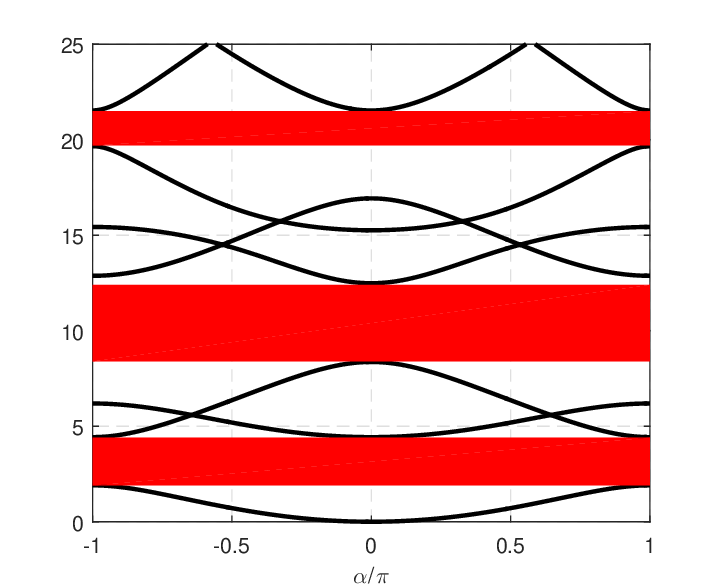}\\[-0cm]
\end{tabular}
\caption{Dispersion diagram. Left: Case I; Right: Case II.}
\label{fig:dd}
\end{figure}

In the right picture of Figure \ref{fig:dd}, there \high{are  ``stop bands''} (in red). When $k^2$ lies in the stop bands, the horizontal line with height $k^2$ has no intersection with any dispersion curves. This implies there is no propagating mode and the scattering problem \eqref{eq:wg1}-\eqref{eq:wg2} has a unique solution in $H^1(\Omega)$.
 The rest of bands are called ``pass bands'', such as the whole domain in the left picure of Figure \ref{fig:dd} and the white region in the right picure of Figure \ref{fig:dd}. 
 When $k^2$ lies in a pass band, from \eqref{eq:A_sig_A}, there is at least one $\alpha\in(-\pi,\pi]$ such that there is a non-trivial $\alpha$-quasi-periodic function $\psi$ \high{satisfying}  $\A_\alpha\psi=k^2\psi$. Thus it is a Bloch wave solution  and is called a {\em propagating Floquet mode}.   The case when $k^2$ lies in a pass band is particularly interesting and \high{challenging. Thus} we discuss more details about this case.

 When $k^2$ lies in a pass band, there is at least one $\alpha\in(-\pi,\pi]$ such that $k^2\in\sigma(A_\alpha)$. Then the set
 \begin{equation*}
  P :=\left\{\alpha\in(-\pi,\pi]:\,\exists\, n\in\N,\,{\rm s.t.,}\, \mu_n(\alpha)=k^2\right\}\neq\emptyset.
 \end{equation*}
Thus $\mathbb{UF}$ can be written as:
\begin{equation*}
 \mathbb{UF}=\left\{\exp(\i\alpha):\,\alpha\in P\right\}.
\end{equation*}
The points in $P $ are divided into the following three classes:
\begin{itemize}
 \item  When $\mu'_n(\alpha)>0$,  $\psi_n(\cdot,\alpha)$  propagates from the left to the right;
 \item when $\mu'_n(\alpha)<0$,  $\psi_n(\cdot,\alpha)$  propagates from the right to the left;
 \item when $\mu'_n(\alpha)=0$, we can not decide the direction that $\psi_n(\cdot,\alpha)$ propagates. 
\end{itemize}
\high{For physical interpretations of the Floquet modes $\psi_n(\cdot,\alpha)$ we refer to Remark 4, \cite{Fliss2015}. }
Based on the above classification, we define the following three sets:
\begin{eqnarray*}
 P_\pm &:=&\left\{\alpha\in(-\pi,\pi]:\,\exists\, n\in\N\text{ s.t., }\mu_n(\alpha)=k^2\text{ and }\pm\mu'_n(\alpha)>0\right\};\\
 P_0 &:=&\left\{\alpha\in(-\pi,\pi]:\,\exists\, n\in\N\text{ s.t., }\mu_n(\alpha)=k^2\text{ and }\mu'_n(\alpha)=0\right\}.
\end{eqnarray*}
Then $P =P_+ \bigcup P_- \bigcup P_0 $. 

\high{\begin{remark}\label{rm1}
It is possible that there are two (or more) different dispersion curves passing through the point $(\alpha,k^2)$. Suppose the elements in $P_+$ have $Q$ different values $\alpha_1,\dots,\alpha_Q$. For any $j=1,2,\dots,Q$, there are $L_j$ ($L_j\geq 1$) different dispersion curves $\mu_{j,\ell}$ ($\ell=1,2,\dots,L_j$) such that
\[
\mu_{j,1}(\alpha_j)=\mu_{j,2}(\alpha_j)=\cdots \mu_{j,L_j}(\alpha_j)=k^2.
\] 
In this case,  $\alpha_j$ is treated as $L_j$ different elements in $P_+$, i.e.,
\[
\alpha_{j,1}=\alpha_{j,2}=\cdots=\alpha_{j,L_j}=\alpha_j.
\]
\end{remark}}

As $\mathbb{UF}$ is symmetric, $P_\pm$ is also symmetric, i.e.,  $\alpha\in P_+ $ if and only if $-\alpha\in P_- $. For details see 
Theorem 4, \cite{Fliss2015}.

As the limiting absorption principle fails when the set $P_0 $ is not empty, we  make the following assumption.

\begin{assumption}\label{asp1}
 Assume that in this paper,  $P_0 =\emptyset$.
\end{assumption}
The assumption is reasonable as the set $\big\{k>0:\,P_0 \neq\emptyset\big\}$ is ``sufficiently small'', i.e., the set is countable with at most one accumulation point at $\infty$ (see Theorem 5, \cite{Fliss2015}).

In our later proof of the new integral representation of LAP solutions, we also have to avoid the cases when $P_+ \cap P_- \neq\emptyset$. Luckily, with the similar method used in the proof of Theorem 5 in \cite{Fliss2015}, we can also prove that this set is discrete in the following lemma. For the proof we refer to Appendix.

\begin{lemma}\label{th:accumulation}
The set $\big\{k\in\R_+:\,P_+ \cap P_- \neq\emptyset\big\}$ is countable, and its only accumulation point is $\infty$.
\end{lemma}

%From the lemma above, the set $\big\{k>0:\,P_+ \cap P_- \neq\emptyset\big\}$ is so ``small'' that it is reasonable that  we make the following assumption. 

\begin{assumption}\label{asp2}
In Section 1-\ref{sec:special}, we assume that $k$ satisfies $P_+ \cap  P_- =\emptyset$.
\end{assumption}

With Assumption \ref{asp1} and \ref{asp2}, when $\alpha\in(-\pi,\pi]$ is an element in $P$, the propagating mode corresponds to $\alpha$ either travels to the left or to the right. This implies that the propagating modes that travel to the left or right are ``separated''. However,  Assumption \ref{asp2} is not a necessary condition for the LAP. The only reason that we make this assumption is to guarantee our ``simplified representation'' for the LAP solution works. However, although the ``simplified representation'' works for almost all positive wavenumbers, we still discuss the case without Assumption \ref{asp2}  in Section \ref{sec:special}.

%At the end of this section, we discuss the distribution of the set $\mathbb{F}$ of Floquet multipliers with the help of corresponding quasi-periodic cell problem \eqref{eq:quasi_periodic_var}. When $k^2\in\sigma(\A)$, the set $\mathbb{UF}$ is not empty. 
We define three subsets of $\mathbb{UF}$ from the definition of $P_\pm $ and $P_0 $  by:
\begin{equation}\label{eq:set_S}
S_\pm^0:=\left\{z=\exp(\i\alpha):\,\alpha\in P_\pm \right\},\quad S_0^0:=\left\{z=\exp(\i\alpha):\,\alpha\in P_0 \right\}.
\end{equation} 
From Remark \ref{rm1}, there may be more than one elements in $P_\pm$ with only one value $\alpha$. In this case, the corresponding elements in $S_\pm^0$ are also different. 
Then $\mathbb{UF}=S_+^0\bigcup S_-^0\bigcup S_0^0$. From the definitions of $P_\pm $, when $z\in S_+^0$, the corresponding Bloch wave solution is propagating to the right; while when $z\in S_-^0$, the corresponding Bloch wave solution is propagating to the left. See Figure \ref{fig:z_alpha} for the unit Floquet multipliers in both $\alpha$- and $z$-space. 
%The eigenfunction corresponding to $z\in  S_+^0$ (resp. $z\in S_-^0$) is propagating from the left to the right (resp. from the right to the left).

\begin{figure}[ht]
	\centering
	\begin{tabular}{c  c}
		\includegraphics[width=0.5\textwidth,height=0.36\textwidth]{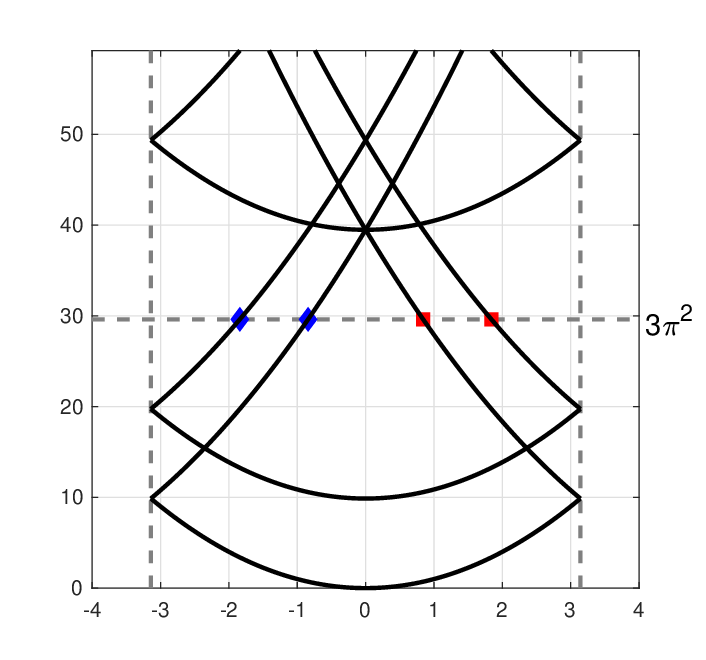} 
		& \includegraphics[width=0.4\textwidth,height=0.36\textwidth]{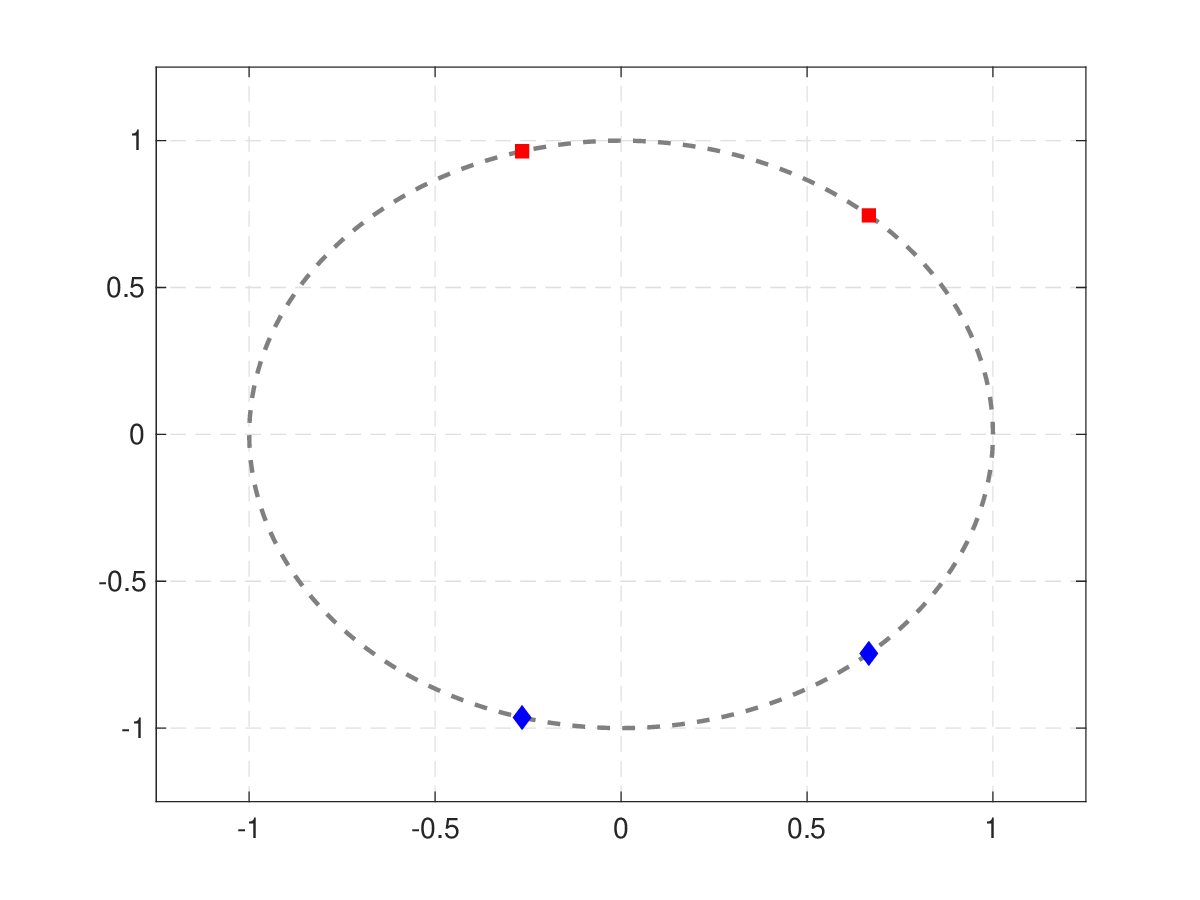}\\[-0cm]
	\end{tabular}
	\caption{Example for $n=1$ and $k^2=3\pi^2$. $\mathbb{UF}$ in $\alpha$-space and $z$-space. Red squares denotes the points in $P_-$ ($\S_-^0$), while blue diamonds denotes the points in $P_+$ ($\S_+^0$).}
	\label{fig:z_alpha}
\end{figure}

We also divide the set $\mathbb{F}\setminus\mathbb{UF}$ into the following two subsets:
\begin{equation*}
RS:=\left\{z\in\mathbb{F}:\,|z|<1\right\};\quad LS:=\left\{z\in\mathbb{F}:\,|z|>1\right\}.
\end{equation*}
The Bloch wave solution corresponds to $z\in RS$ is evanescent, while the one corresponds to $z\in LS$ is anti-evanescent.
Moreover, let
\begin{equation*}
S_+:=S_+^0\bigcup RS,\quad S_-:=S_-^0\bigcup LS.
\end{equation*}

\begin{remark}\label{rm:asp2}
	We conclude the properties of the sets $RS$, $LS$ and $S_\pm^0$ from the properties of $\mathbb{F}$ as follows:
	\begin{itemize}
		\item From the symmetry of $\mathbb{F}$, the sets $S_+^0$ and $S_-^0$, $RS$ and $LS$ are symmetric, i.e.,
		\begin{equation}\label{eq:symm}
		z\in S_+^0\iff z^{-1}=\overline{z}\in S_-^0;\quad z\in RS\iff z^{-1}\in LS.
		\end{equation}
		This also implies that
		\begin{equation*}
		z\in S_+\iff z^{-1}\in S_-. 
		\end{equation*}
		\item When Assumption \ref{asp1} is satisfied, $S_0^0=\emptyset$, thus $S_+^0\bigcup S_-^0=\mathbb{UF}$. 
		\item When Assumption \ref{asp2} is satisfied, $S_+^0\cap S_-^0=\emptyset$.
		\item From \eqref{eq:symm}, if $\pm1\in S_+^0$ (or $\pm1\in S_-^0$), then $\pm 1\in S_-^0\cap S_+^0$. If Assumption \ref{asp2} is satisfied, $S_+^0\cap S_-^0=\emptyset$, then $\pm1\notin\mathbb{UF}\setminus S_0^0$. If Assumption \ref{asp1} is also satisfied, $S_0^0=\emptyset$ implies that $\pm 1\notin\mathbb{F}$.
	\end{itemize}

\end{remark}

%\begin{theorem}\label{th:z-eq}
%Consider the operator $I-K_z$ for $z\in\C$. %, where the wave number $k$ satisfies Assumption \ref{asp1} and \ref{asp2}.
%\begin{itemize}
%\item 
%The operator $I-K_z$ is invertible for $z\in\C\setminus\S$, where $\S$ is a discrete subset of $\C$. Moreover, $(I-K_z)^{-1}$ depends analytically on $z\in\left(\C\setminus\R_{\leq 0}\right)\setminus\S$, and is meromorphic in $\C\setminus\R_{\leq 0}$. For any $z\in\S$, the kernel of $I-K_z$, which is denoted by $\mathcal{N}(I-K_z)$, is finite dimensional.
%\item  When $k^2\notin\sigma(A)$, the operator $I-K_z$ is invertible for $z$ in a small neighbourhood of the unit circle $\UC$ and the inverse operator depends analytically on $z$ in the intersection of the neighbourhood and $\C\setminus\R_{\leq 0}$. 
%\item When $k^2\in\sigma(A)$, there is a finite subset $\S_0\subset\mathbb{UF}$ such that $I-K_z$ is not invertible for any $\alpha\in \S_0$. The inverse of $I-K_z$ exists in a sufficiently small neighourhood of $\UC$ except for the points in $\S_0$, and depends analytically on $z$ in the intersection of the neighourhood and $\left(\C\setminus\R_{\leq 0}\right)\setminus\S_0$.
%\end{itemize}
%\end{theorem}

\begin{lemma}\label{th:s_pm_dist}
	Let $k>0$. 
	There is a $\tau>0$ such that $RS\in B(0,\exp(-\tau))$ and $LS\in\C\setminus\overline{B(0,\exp(\tau))}$.
\end{lemma}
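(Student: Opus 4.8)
The plan is to exploit the discreteness of $\mathbb{F}$ together with the property, recorded in the definition of $\mathbb{F}$, that its only accumulation points are $0$ and $\infty$. The first thing I would extract is a local finiteness statement: for any compact set $K\subset\C\setminus\{0\}$, the intersection $\mathbb{F}\cap K$ is finite. Indeed, were $\mathbb{F}\cap K$ infinite, it would by compactness of $K$ possess an accumulation point lying in $K\subset\C\setminus\{0\}$, i.e. an accumulation point of $\mathbb{F}$ different from $0$ and $\infty$, contradicting the listed property.

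Applying this with the compact annulus $K=\{z:\,1/2\le|z|\le 1\}$ shows that $\mathbb{F}\cap K$ is finite; in particular only finitely many points of $RS$ satisfy $|z|\ge 1/2$, and each of these has modulus strictly less than $1$. Since every remaining point of $RS$ has modulus below $1/2$, the supremum $r:=\sup_{z\in RS}|z|$ is strictly less than $1$ (with the convention $r=0$ when $RS=\emptyset$). Choosing any $\tau\in(0,-\log r)$ (any $\tau>0$ if $r=0$) gives $\exp(-\tau)>r$ and hence $RS\subset B(0,\exp(-\tau))$, as required. The crucial point here—and the only place where anything beyond elementary topology enters—is that no point of $RS$ may crowd up against the unit circle, which is exactly what the ``only accumulation points are $0$ and $\infty$'' property forbids; note that it even rules out accumulation at a point of the finite set $US=\mathbb{F}\cap\UC$.

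For the second inclusion I would invoke the inversion symmetry $z\in\mathbb{F}\iff z^{-1}\in\mathbb{F}$, which gives $z\in RS\iff z^{-1}\in LS$. Hence for every $z\in LS$ one has $z^{-1}\in RS$ and therefore $|z|=1/|z^{-1}|\ge 1/r>1$. The requirement on $\tau$ for this inclusion, namely $\exp(\tau)<1/r$, is the same constraint $\tau<-\log r$ already imposed for $RS$, so a single $\tau\in(0,-\log r)$ serves both: it yields $|z|>\exp(\tau)$ for all $z\in LS$, that is $LS\subset\C\setminus\overline{B(0,\exp(\tau))}$.

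I expect essentially no genuine obstacle: the statement is a spectral-separation fact whose entire content reduces to the local finiteness of $\mathbb{F}$ in $\C\setminus\{0\}$ plus the inversion symmetry. The only mild care needed is to treat the degenerate case $RS=\emptyset$ (where $LS=\emptyset$ by symmetry and any $\tau>0$ works) and to verify, as above, that the two constraints on $\tau$ coincide so that one constant suffices.
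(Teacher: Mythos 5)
Your argument is correct, and it matches the intended reasoning: the paper states this lemma without any proof (it is a direct consequence of the properties of $\mathbb{F}$ listed in its definition, carried over from \cite{Zhang2019a}), and your route---local finiteness of $\mathbb{F}$ on compact subsets of $\C\setminus\{0\}$ forcing $r:=\sup_{z\in RS}|z|<1$, then the inversion symmetry $z\in RS\iff z^{-1}\in LS$ transferring the bound to $LS$ with the same $\tau\in(0,-\log r)$---is exactly the standard one. Your care with the degenerate case $RS=\emptyset$ and the observation that a single $\tau$ serves both inclusions are both correct; as a side remark, your symmetry statement also silently corrects the paper's typo ``$z\in RS$ if and only if $z\in LS$'', which should read $z^{-1}\in LS$.
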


\subsection{Quasi-periodic problems}
\label{sec:qp-prb}

From the last section, quasi-periodic problems are very important in the investigation of scattering problems in periodic domains. 
In this section, we consider the $z$-dependent cell problem:
\begin{eqnarray}\label{eq:quasi1}
 \Delta u_z+k^2 qu_z&=&f_z\quad\text{ in }\Omega_0;\\\label{eq:quasi2}
 \frac{\partial u_z}{\partial \high{x_2}}&=&0\quad\text{ on }\partial\,{\Omega_0};
\end{eqnarray}
where $z\neq 0$ is a complex number and $f_z\in L^2(\Omega_0)$ depends analytically on $z$. The analytical dependence on $z$ is defined as follows.

We are interested in the well-posedness of the problem \eqref{eq:quasi1}-\eqref{eq:quasi2}, and also the dependence of the solution $u_z$ on the quasi-periodicity $z$. To this end, it is more convenient to study the problems in a fixed domain. %So we introduce the following operator: 
%\begin{equation*}
% \left(\zeta_z u_z\right)(x_1,x_2):=z^{x_1}u_z(x_1,x_2),
%\end{equation*}
Let $v_z=z^{-x_1}u_z(x)$, then $v_z\in H_\p^1(\Omega_0)$.
Note that as $z^{-x_1}$ is a multi-valued function, we require that $z$ lies in the branch cutting off along the negative real axis  (denoted by $\C_\times:=\{z\in\C\setminus\{0\}:\,-\pi<\arg(z)\leq \pi\}$, where $\arg(z)$ is the argument of the complex number $z$).
From direct calculation, $v_z$ is the solution of the following problem:
\begin{eqnarray*}
 \Delta v_z+2\log(z)\frac{\partial v_z}{\partial x_1}+(k^2q+\log^2(z))v_z&=&z^{-x_1} f_z\quad\text{ in }\Omega_0;\\
 \frac{\partial v_z}{\partial \nu}&=&0\quad\text{ on }\partial\,{\Omega_0}.
\end{eqnarray*}
 Then the variational formulation of the periodized problem, is to find  a solution $v_z\in H_\p^1(\Omega_0)$ such that it satisfies
\begin{equation}\label{eq:quasi_periodic_var}
\begin{aligned}
\int_{\Omega_0}\left[\grad v_z\cdot\grad\overline{\phi}+\log(z)\left(v_z\frac{\partial\overline{\phi}}{\partial x_1}-\frac{\partial v_z}{\partial x_1}\overline{\phi}\right)-(k^2 q+\log^2(z))v_z\overline{\phi}\right]\d x\\
=-\int_{\Omega_0}\left(z^{-x_1} f_z\right)\overline{\phi}\d x
\end{aligned}
\end{equation}
for any $\phi\in H_\p^1(\Omega_0)$.

The left hand side is a sesquilinear form defined in $H_\p^1(\Omega_0)\times H_\p^1(\Omega_0)$. From Riesz's lemma, there is a bounded linear operator $\K_z\in\mathcal{L}(H_\p^1(\Omega_0))$ and a function $\widetilde{f}_{z}\in H_z^1(\Omega_0)$ such that
\begin{eqnarray}
&&\high{\begin{aligned}
&\left<\K_z v_z,\phi\right>= \\%
	&\quad-\int_{\Omega_0}\left[\log(z)\left(v_z\frac{\partial\overline{\phi}}{\partial x_1}-\frac{\partial v_z}{\partial x_1}\overline{\phi}\right)-(k^2 q+1+\log^2(z))v_z\overline{\phi}\right]\d x;
\end{aligned}}
\\\label{eq:tilde_f}
&&\left<z^{-x_1}\widetilde{f}_z,\phi\right>=-\int_{\Omega_0}\left(z^{-x_1}f_z\right)\overline{\phi}\d x;
\end{eqnarray}
for all $\phi\in H_\p^1(\Omega_0)$, 
where $\left<\cdot,\,\cdot\right>$ is the inner product defined in $H_\p^1(\Omega_0)$.
Thus the variational form  \eqref{eq:quasi_periodic_var} is equivalent to 
\begin{equation*}
(I-\K_z)v_z=z^{-x_1}\widetilde{f}_z.
\end{equation*}
This implies that when $I-\K_z$ is invertible, then
\[
 v_z=\left(I-\K_z\right)^{-1}z^{-x_1}\widetilde{f}_z.
\]
%As $u_z=\zeta_z v_z$, it has the following form:
%\begin{equation*}
%u_z=\zeta_z\left(I-\K_z\right)^{-1}\zeta_z^{-1}\widetilde{f}_z=(I-\B_z)^{-1}\widetilde{f}_z,\quad\text{ where }\B_z=\zeta_z \K_z \zeta_z^{-1}.
%\end{equation*}

Now we focus on the inverse operator of $I-\K_z$. As $\K_z$ is compact and depends analytically on $z\in\C_\times$, $I-\K_z$ is an analytic family of Fredholm operators. We recall the following result from the {\em Analytic Fredholm Theory}.

\begin{theorem}[Theorem VI.14, \cite{Reed1980}]
 Let $D$ be an open connected domain in $\C$, $X$ be a Hilbert space, and $\T:\,D\rightarrow \mathcal{L}(X)$ be an operator valued analytic function such that $\T(z)$ is compact for each $z\in D$. Then either
  \begin{itemize}
  \item $(I-\T(z))^{-1}$ does not exist for any $z\in D$, or
  \item Let the set $S=\left\{z\in D:\,I-T(z)\text{ is not one-to-one}\right\}$. Then $S$ is a discrete subset of $D$. In this case, $(I-\T(z))^{-1}$ is meromorphic in $D$ and analytic in $D\setminus S$. The residues at the poles are finite rank operators.
 \end{itemize}
 \label{th:ana_fred_thy}
\end{theorem}

From Section in \cite{Zhang2019a}, the set of poles of $\left(I-\K_z\right)^{-1}$ is exactly $\mathbb{F}$. Thus 
$v_z$, or equivalently $u_z=z^{x_1}v(x)$, depends analytically on $z\in\C\setminus\left(\mathbb{F}\bigcup\{0\}\right)$ and meromorphically on $z\in\C\setminus\{0\}$.

\section{The Floquet-Bloch transform and its application}
\label{sec:f-b-trans-app}

\subsection{The Floquet-Bloch transform}
\label{sec:f-b-trans}

The Floquet-Bloch transform is a very important tool in the analysis of scattering problems in PDEs in periodic structures, see \cite{Kirsc2017,Kirsc2017a,Fliss2015}. %The most popular form of the Floquet-Bloch transform is based on the Fourier series (see \cite{Lechl2016,Kirsc2017,Kirsc2017a,Fliss2015}). However, in \cite{Kuchm1993}, the transform is defined based on the Laurent series and we follow this definition in this paper. First, we recall the definition of the Laurent series:
 
% \begin{definition}
%  Suppose $\{x_n\}_{n=-\infty}^\infty$ is a series in $\ell^\infty$. Then the Laurent series is defined as
%  \begin{equation}\label{eq:def_laurent}
%   X(z)=\sum_{n=-\infty}^\infty x_n z^{-n}.
%  \end{equation}
% \end{definition}

%Define the {\em Region of Convergence (ROC)} as the domain in $\C$ such that the series \eqref{eq:def_laurent} converges. Note that the DOC may be empty for given coefficients $\{x_n:\,n\in\Z\}$. When the ROC is not empty, $X(z)$ is analytic in the ROC. In this case, when $X(z)$ is known, we can also compute the coefficients by
%\begin{equation}\label{eq:def_inv_laurent}
% x_n=\frac{1}{2\pi\i}\oint_{\mathcal{C}} X(z)z^{n-1}\d z,
%\end{equation}
%where $\mathcal{C}$ is a counterclockwise closed path encircling the origin lies in ROC. 

%Then we extend the Laurent series to the periodic domain, i.e., 

For a function $\phi\in C_0^\infty(\Omega)$, define the Floquet-Bloch transform of $\phi$ by
\begin{equation}\label{eq:def_F}
 (\F \phi)(z,x):=\sum_{n=-\infty}^\infty \phi(x_1+n,x_2)z^{-n},\quad x\in\Omega_0,\,z\in\C.
\end{equation}
The transform is well-defined for any smooth function with compact support, and it can be extended to more general cases. Define the {\em Region of Convergence (ROC)} as the domain in $\C$ such that the series \eqref{eq:def_F} converges. Note that the DOC may be empty for given function $\phi$. 
When $\phi$ {\em decays exponentially at the rate $\gamma$}, i.e., there is a $\gamma>0$ and $C>0$ such that $\phi$ satisfies
\begin{equation}\label{eq:exp_decay}
|\phi(x_1,x_2)|\leq C\exp(-\gamma|x_1|),\quad\forall\,x\in\Omega,
\end{equation} 
the Floquet-Bloch transform of $\phi$ is still well-defined, and the ROC is the annulus
\begin{equation*}
T_\gamma=\big\{z\in\C:\,\exp(-\gamma)<|z|<\exp(\gamma)\big\}.
\end{equation*}
Moreover, the function $(\F \phi)(z,\cdot)$ depends analytically on $z\in T_\gamma$. It is also easy to check that the transformed function $(\F\phi)(z,\cdot)$ is quasi-periodic (i.e., it satisfies \eqref{eq:z_quasi}). We conclude some mapping properties of the operator $\F$ in the following proposition.

\begin{proposition}\label{th:prop_F}
The operator $\F $ has the following properties when $z$ lies on the unit circle $\UC$ (see \cite{Lechl2016,Kuchm2016}):
\begin{itemize}
\item $\F$ is an isomorphism between $H^s(\Omega)$ and $L^2(\UC;H^s_z(\Omega_0))$ ( $s\in\R$), where 
\begin{equation*}
L^2(\UC;H^s_z(\Omega_0)):=\left\{\phi\in\mathcal{D}'(\UC\times\Omega_0):\,\left[\int_{\UC}\left\|\phi(z,\cdot)\right\|^2_{H^s_z(\Omega_0)}\d z\right]^{1/2}<\infty\right\}.
\end{equation*}
\item $\F \phi$ depends analytically on $z\in T_\gamma$, if and only if $\phi$ decays exponentially  with the rate $\gamma$.

%\end{proposition}

%When the $z\in\UC$, we replace $z$ by $\exp(\i\alpha)$ with $\alpha\in(-\pi,\pi]$, the operator $\F$ has the form of the Floquet-Bloch transform defined in \cite{Lechl2016,Kirsc2017,Kirsc2017a,Fliss2015}. Let $(\J \phi)(\alpha,x)$ be the transform defined by
%\begin{equation*}
% (\J \phi)(\alpha,x):=\sum_{j\in\Z}\phi(x_1+n,x_2)\exp(-\i n\alpha),\quad x\in\Omega_0,\,\alpha\in(-\pi,\pi].
%\end{equation*}
%Then its inverse transform has the following representation (see \cite{Lechl2016,Kuchm2016})
%\begin{equation*}
% (\J^{-1} \psi)(x_1+n,x_2)=\frac{1}{2\pi}\int_{-\pi}^\pi \psi(\alpha,x)\exp(\i n\alpha)\d\alpha,\quad x\in\Omega_0,\,n\in\Z.
%\end{equation*}
%Note that if we replace $\exp(\i \alpha)$ by $z$, then
%\begin{equation*}
%\begin{aligned}
%(\J^{-1}\psi)(x_1+n,x_2)&=\frac{1}{2\pi}\oint_{\UC}\psi(z,x)z^{n}(-\i z^{-1})\d z\\
%&=\frac{1}{2\pi\i}\oint_{\UC}\psi(z,x)z^{n-1}\d z.
%\end{aligned}
%\end{equation*}

%From Cauchy integral theorem, we have the following results.

%\begin{proposition}\label{th:inv_F}
\item Given $\psi(z,x):=(\F \phi)(z,x)$ for some $\phi\in H^s(\Omega)$ and satisfies \eqref{eq:exp_decay} for some $\gamma>0$,  the inverse operator $\F$ is given by:
 \begin{equation}\label{eq:def_invF}
  (\F^{-1}\psi)(x_1+n,x_2)=\frac{1}{2\pi\i}\oint_{\UC}\psi(z,x)z^{n-1}\d z
 \end{equation}
%where $\mathcal{C}$ is a counter-clockwise closed path encircling the origin lies in $T_\gamma$. Moreover, when $\mathcal{C}$ is the unit circle $\UC$, the inverse transform has exactly the same form with $\J^{-1}$. 
\end{itemize}
\end{proposition}

% Note that the form of the inverse Floquet-Bloch transform $\F$ has the same form of \eqref{eq:def_inv_laurent}.

\subsection{Application of the Floquet-Bloch transform}
\label{sec:app-f-b}

In this section, we apply the Floquet-Bloch transform $\F$ to the scattering problem \eqref{eq:wg1}-\eqref{eq:wg2}, when $k^2\notin\sigma(A)$. We are particularly interested in the case that:
\begin{itemize}
 \item for $k>0$, $k^2\notin\sigma(A)$, i.e., $k^2$ lies in a stop band; or
 \item $k^2$ is no longer real, i.e.,  $k^2=k^2_0+\i\epsilon$ for some fixed $k_0>0$ and $\epsilon>0$.
\end{itemize}
When either of the two conditions is satisfied, the problem \eqref{eq:wg1}-\eqref{eq:wg2} is uniquely solvable in $H^1(\Omega)$. Moreover, $u$ decays exponentially at the infinity, i.e., $u$ satisfies \eqref{eq:exp_decay} for some $C>0$ and $\gamma>0$ (see \cite{Ehrha2010}). 

\begin{remark}
 From now on, we assume that ${\rm supp}(f)\subset\Omega_0$. The results are easily extended to cases when ${\rm supp}(f)$ lies in  larger bounded domains.
\end{remark}

We define the Floquet-Bloch transform $w(z,x):=(\F u)(z,x)$, then the transformed field $w(z,\cdot)$ is well-defined  and depends analytically on $z\in T_\gamma$ in $H^1(\Omega_0)$. It is also easy to check that for any  $z\in T_\gamma$, $w(z,\cdot)\in H_z^1(\Omega_0)$ satisfies \eqref{eq:quasi1}-\eqref{eq:quasi2}. Note that the source term in \eqref{eq:quasi1} is $f$, as $(\F f )(z,x)=f(x)$ for any $z\in T_\gamma$.
% \begin{eqnarray}\label{eq:wg_z1}
%   \Delta w(z,\cdot)+k^2 q w(z,\cdot)&=&f\quad\text{ in }\Omega_0;\\\label{eq:wg_z2}
%   \frac{\partial w(z,\cdot)}{\partial \nu}&=&0\quad\text{ on }\partial{\Omega_0}.
% \end{eqnarray}
% Note that the source term 

From the inverse Floquet-Bloch transform and Cauchy integral theorem, the solution of the original problem can be represented as:
\begin{equation*}
\begin{aligned}
 u(x_1+n,x_2)= (\F^{-1} w)(x_1+n,x_2)&=\frac{1}{2\pi \i}\oint_{\UC} w(z,x)z^{n-1}\d z\\
 &=\frac{1}{2\pi\i}\oint_{\mathcal{C}} w(z,x)z^{n-1}\d z,
\end{aligned}
\end{equation*}
where $\mathcal{C}$ is a rectifiable curve in $T_\gamma$ encircling $0$.

%From the representation above, the solution $u$ of \eqref{eq:wg1}-\eqref{eq:wg2} is calculated by the contour integration of the transformed field $w(z,\cdot)$, which is a family of quasi-periodic solutions of cell problems \eqref{eq:quasi1}-\eqref{eq:quasi2}. 
From the exponential decay of $u$, $(\F u)(z,\cdot)$ exists and depends analytically on $z\in T_\gamma$. 
On the other hand, from Section \ref{sec:qp-prb},  when $z\in\C\setminus\mathbb{F}$, the problem \eqref{eq:quasi1}-\eqref{eq:quasi2} is uniquely solvable in $H_z^1(\Omega_0)$, and $w(z,\cdot)$ depends analytically on $z\in\C\setminus(\mathbb{F}\bigcup\{0\})$ and meromorphically on $z\in\C\setminus\{0\}$. Thus $(\F u)(z,\cdot)$ is extended meromorphically in $\C\setminus\{0\}$. 
%So we can withdraw the Floquet-Bloch transform process and only focus on the relationship between the original problem \eqref{eq:wg1}-\eqref{eq:wg2} and the cell problem \eqref{eq:quasi1}-\eqref{eq:quasi2}.  
From the analytic continuation, we obtain the following result.

\begin{theorem}
 When $k^2\notin\sigma(\A)$, the Floquet-Bloch transformed field $(\F u)(z,x)$ is extended to an analytic function in $\C\setminus(\mathbb{F}\bigcup\{0\})$ and a meromorphic function in $\C\setminus\{0\}$ by the solution $w(z,\cdot)$ of \eqref{eq:quasi1}-\eqref{eq:quasi2}. 
\end{theorem}

 The integral representation of $u$ is obtained from Cauchy integral theorem.

\begin{theorem}
\label{th:inv_w_u}
Suppose $k^2\notin\sigma(\A)$.  $w(z,\cdot)$ is the solution of \eqref{eq:quasi1}-\eqref{eq:quasi2} for $z\in\C\setminus\mathbb{F}$. Then the solution of \eqref{eq:wg1}-\eqref{eq:wg2} is written as
\begin{equation}\label{eq:u_inv}
u(x_1+n,x_2)= (\F^{-1} w)(x_1+n,x_2)=\frac{1}{2\pi i}\oint_{\mathcal{C}} w(z,x)z^{n-1}\d z,
\end{equation}
where $\mathcal{C}\subset\C$ is a counter-clockwise closed rectifiable  path encircling all the points in $RS(=S_+)$ and does not encircling any point in $LS(=S_-)$.
\end{theorem}

% \begin{remark}
% In this section, we build up a relationship between the unbounded problem \eqref{eq:wg1}-\eqref{eq:wg2} and the cell problem \eqref{eq:quasi1}-\eqref{eq:quasi2}. Although the Floquet-Bloch transform only exists when $z$ lies in $T_\gamma$, the curve $\mathcal{C}$ could be chosen such that some parts of $\mathcal{C}$ lies in the exterior of $T_\gamma$. This implies that we could consider the problems \eqref{eq:wg1}-\eqref{eq:wg2} and \eqref{eq:quasi1}-\eqref{eq:quasi2} separately, i.e., the problem \eqref{eq:quasi1}-\eqref{eq:quasi2} is not necessarily the Floquet-Bloch transform of \eqref{eq:wg1}-\eqref{eq:wg2} but a family of independent problems. 
% \end{remark}

%In this section, we apply the Floquet-Bloch transform to the case that $k^2\notin\sigma(\A)$, and obtain the integral representation of the solution in Theorem \ref{th:inv_w_u}. In the next section, we focus on the  case that $k^2\in\sigma(\A)$.

\section{The Limiting absorption principle (LAP)}
\label{sec:lap}

In this section, we consider the case when $k^2\in\sigma(\A)$ with the help of the limiting absorption principle. First we consider the damped Helmholtz equation \eqref{eq:wg_ep1}-\eqref{eq:wg_ep2}.% \high{with $k^2$ be replaced by $k^2+\i\epsilon$  for any $\epsilon>0$}. 
The corresponding $z$-quasi-periodic problem is formulated as:
\begin{eqnarray}\label{eq:wg_z1}
\Delta w_\epsilon(z,\cdot)+(k^2+\i\epsilon)qw_\epsilon(z,\cdot)&=&f\quad\text{ in }\Omega_0;\\
\label{eq:wg_z2}
\frac{\partial w_\epsilon(z,\cdot)}{\partial\high{x_2}}&=&0\quad\text{ on }\partial\,{\Omega_0}.
\end{eqnarray}
Similar to the definition of $\K_z$, we denote the operator with $k^2+\i\epsilon$ by $\K_z^\epsilon$. From \high{Theorem 4 in \cite{Stein1968}}, the poles of the operator $(I-\K^\high{\epsilon}_z)^{-1}$ depends continuously on $\epsilon$. First, we study the asymptotic behaviour of distributions of the poles when $\epsilon>0$ is sufficiently small.

\subsection{Distribution of poles of the damped Helmholtz equations}
\label{sec:poles}

From the Floquet-Bloch theory (see \cite{Kuchm1993,Fliss2015}), $k^2\in\sigma(\A)$ implies that $\mathbb{UF}\neq\emptyset$. For any $z\in\mathbb{UF}$, $k^2$ is an eigenvalue of $\A_z$. As both $S_+^0$ and $S_-^0$ are finite sets,  and $S_+^0$ and $S_-^0$, $RS$ and $LS$ are symmetric in the sense of \eqref{eq:symm}, they are written as
\begin{eqnarray}\label{eq:set_R}
&S_+^0=\{z_{j,\ell}^+:\,j=1,\dots,Q;\,\ell=1,\dots,L_j\};\quad RS=\{z_{Q+1}^+,z_{Q+2}^+,\dots\};\\
\label{eq:set_L}
&S_-^0=\{z_{j,\ell}^-:\,j=1,\dots,Q;\,\ell=1,\dots,L_j\};\quad LS=\{z_{Q+1}^-,z_{Q+2}^-,\dots\};
\end{eqnarray}
where $z_j^+$ ($j=1,\dots,Q$) are $Q$ different values on the unit circle, and so are $z_j^-$ ($j=1,\dots,Q$). Moreover, $z_{j,1}^\pm=\dots=z_{j,L_j}^\pm=z_j^\pm$ for $j=1,\dots,Q$, and $z_j^+=\left(z_j^-\right)^{-1}$ for any integer $j\in\N$. From Assumption  \ref{asp1} and \ref{asp2}, $\mathbb{UF}=S_+^0\bigcup S_-^0$, $\mathbb{F}=S_+^0\bigcup S_-^0\bigcup RS\bigcup LS$; moreover, $S_+^0\cap S_-^0=\emptyset$. %Note that from Remark \ref{rm1} and the definition of $S_\pm^0$ in \eqref{eq:set_S}, there may be $j\neq\ell\in\{1,2,\dots,Q\}$ such that $z_j^+=z_\ell^+$ and $z_j^-=z_\ell^-$. Suppose there are $Q_N$ ($1\leq Q_N\leq Q$) different elements elements in $S_\pm^0$
%We reorder the elements such that 

%\begin{remark}\label{rm:eig_diff}
% In the first subsections of Section \ref{sec:lap}, we always assume that the $Q$ elements in $S_+^0$ (or $S_-^0$) are all different. Actually, everything is similar and easily extended to cases \high{where} there are different $j$'s in $\{1,2,\dots,Q\}$ such that $z_j^+$ (or $z_j^-$) have the same value. This will be explained in the last subsection.
%\end{remark}

From the continuous dependence of poles,  for any $z_{j,\ell}^\pm$ or $z_j^\pm\in\mathbb{F}$ with $j\in\N$, there is a continuous function $Z_{j,\ell}^\pm$ or $Z_j^\pm(\epsilon)$, such that $\{Z_{j,\ell}^+(\epsilon),Z_{j,\ell}^-(\epsilon):\,j=1,\dots,Q;\,\ell=1,\dots,L_j\}\bigcup\{Z_j^+(\epsilon),Z_j^-(\epsilon):\,j\geq Q+1\}$ are exactly the set of all poles with respect to $k^2+\i\epsilon$. Moreover,  $\lim_{\epsilon\rightarrow 0}Z^\pm_{j,\ell}(\epsilon)=Z^\pm_{j,\ell}(0)=z_j^\pm$ for $j=1,\dots,Q$ and $\ell=1,\dots,L_j$ and $\lim_{\epsilon\rightarrow 0}Z^\pm_{j}(\epsilon)=Z^\pm_{j}(0)=z_j^\pm$ for $j\geq Q+1$. Analogous to the case $\epsilon=0$, we define the following sets depending on $\epsilon$:
\begin{eqnarray*}
&&S_+^0(\epsilon)=\{Z_{j,\ell}^+(\epsilon):\,j=1,\dots,Q;\,\ell=1,\dots,L_j\};\\ 
&&S_-^0(\epsilon)=\{Z_{j,\ell}^-(\epsilon):\,j=1,\dots,Q;\,\ell=1,\dots,L_j\};\\ &&LS(\epsilon)=\{Z_{Q+1}^-(\epsilon),Z_{Q+2}^-(\epsilon),\dots\};\\
&&RS(\epsilon)=\{Z_{Q+1}^+(\epsilon),Z_{Q+2}^+(\epsilon),\dots\}.
\end{eqnarray*} %For simplicity, let
%\begin{equation*}
%S_+(\epsilon):=S_+^0(\epsilon)\bigcup RS(\epsilon),\quad %S_-(\epsilon)=S_-^0(\epsilon)\bigcup LS(\epsilon).
%\end{equation*}
From the continuous dependence of $\epsilon>0$, we have the following properties of $Z_j^\pm(\epsilon)$ when $j=1,2,\dots,Q$. For the proof we refer to Appendix in \cite{Joly2006}. 

\begin{lemma}\label{th:curve_z}
%Suppose Assumption \ref{asp1} is satisfied. 
For any $j=1,2,\dots,Q$, when $\epsilon>0$ is sufficiently small, the functions satisfy $|Z_{j,\ell}^+(\epsilon)|<1$ and $|Z_{j,\ell}^-(\epsilon)|>1$. 
\end{lemma}

% \begin{proof}
% We follow the idea from the appendix in \cite{Joly2006}. First consider $Z_j^+(\epsilon)$, then $z_j^+=Z_j^+(0)$ and $z_j^+=\exp(\i\alpha_j)$ for an $\alpha_j\in P_+$. Then  there is a function $\alpha(\epsilon)=-\i\log(Z_j^+(\epsilon))$ which is differentiable with respect to $\epsilon$. Thus 
% \begin{equation*}
% Z_j^+(\epsilon)=\exp(\i\alpha(\epsilon)),\quad\lim_{\epsilon\rightarrow 0}\alpha(\epsilon)=\alpha_j\text{ for an }\alpha_j\in P_+.
% \end{equation*}
% From the perturbation theory, the function $\mu_n$ could be extended to an analytic function for $\alpha$ in a sufficiently small neighbourhood of $(-\pi,\pi]\times\{0\}$, then there is an integer $n$ such that
% \begin{equation*}
% k^2+\i\epsilon=\mu_n(\alpha(\epsilon)),\quad\text{ for  sufficiently small }\epsilon>0.
% \end{equation*}
% Differentiate the two functions $Z_j^+$ and $\mu_n$, and let $\epsilon=0$, then
% \begin{equation*}
% \left(Z_j^+\right)'(0)=\i\alpha'(0)\exp(\i\alpha_j);\quad \mu'_n(\alpha_j)\alpha'(0)=\i.
% \end{equation*}
% As $\alpha_j\in P_+$, then $\mu'_n(\alpha_j)>0$. Thus $\alpha'(0)\neq 0$ and 
% \begin{equation*}
% \left(Z_j^+\right)'(0)\overline{Z_j^+(0)}=-\mu_n'(\alpha_0)^{-1}<0.
% \end{equation*}
% This means that
% \begin{equation*}
% \left.\frac{\d }{\d \epsilon}\left|Z_j^+(\epsilon)\right|^2\right|_{\epsilon=0}<0.
% \end{equation*}
% Thus for $\epsilon>0$ sufficiently small, $\left|Z_j^+(\epsilon)\right|^2<|z(0)|^2=1$. The case for $Z_j^-$ is proved in the same way, thus is omitted here.
% 
% \end{proof}

Thus we conclude the behaviour of  $Z_j^\pm(\epsilon)$ for sufficiently small $\epsilon$:
\begin{itemize}
\item for any $z_j^+\in S^0_+$, the points $Z_{j,\ell}^+(\epsilon)\in S_+^0(\epsilon)$ converges to $z_j^+$ from the interior of the unit circle;
\item for any $z_j^-\in S^0_-$, the points $Z_{j,\ell}^-(\epsilon)\in S_-^0(\epsilon)$ converges to $z_j^-$ from the exterior of the unit circle.
\end{itemize}
To make it clear, we present a visualization of  examples of the curves in Figure \ref{fig:curve}. The red rectangles are points in $S_-^0$ and the blue diamonds are points in $S_+^0$. The asymptotic behavior of $Z_{j,\ell}^\pm(\epsilon)$ as $\epsilon\rightarrow 0$ can be seen from the picture.

For the points in $RS(\epsilon)$ or $LS(\epsilon)$, we estimate their distributions for sufficiently small $\epsilon>0$ in the following lemma (see Lemma 17, \cite{Zhang2019a}).

\begin{lemma}\label{th:curve_z_inside}
Suppose for some $\tau>0$, $RS\subset B(0,\exp(-\tau))$  and $LS\subset\C\setminus\overline{B(0,\exp(\tau))}$. For any $\tau_1\in (0,\tau)$, there exists $\epsilon_0>0$ such that
 $Z_j^+(\epsilon)\in B(0,\exp(-\tau_1))$ and $Z_j^-(\epsilon)\in \C\setminus\overline{B(0,\exp(\tau_1))}$ for any $j\geq Q+1$ and $\epsilon\in(0,\epsilon_0)$.
\end{lemma}

From Lemma \ref{th:curve_z} and \ref{th:curve_z_inside}, when $\epsilon>0$ is sufficiently small, the sets $S_\pm^0(\epsilon)$ and $RS(\epsilon)$, $LS(\epsilon)$ have  following properties:
\begin{equation}\label{eq:dist_pole}
 S_+^0(\epsilon)\bigcup RS(\epsilon)\subset B(0,1);\quad S_-^0(\epsilon)\bigcup LS(\epsilon)\subset \C\setminus\overline{B(0,1)}.
\end{equation}
%Moreover, any point in $S_+^0(\epsilon)$ approaches the unit circle from the interior of $B(0,1)$, and any point in $S_-^0(\epsilon)$ from the exterior of $B(0,1)$. When $\epsilon>0$ is sufficiently small, all the points in $RS(\epsilon)$ lie uniformly in a ball with  center $0$ and radius smaller than one, while all the points in $LS(\epsilon)$ lie uniformly in the exterior of a larger ball with center $0$ and radius larger than one.

\subsection{Integral representation of the LAP solution}
\label{sec:inte_lap}

Now we are prepared to consider the LAP solution of \eqref{eq:wg1}-\eqref{eq:wg2} when $k^2\in\sigma(\A)$.  From Theorem \ref{th:inv_w_u}, the solution $u_\epsilon$ ($\forall\,\epsilon>0$) of the damped problem \eqref{eq:wg_ep1}-\eqref{eq:wg_ep2} is represented by the curve integral:
\begin{equation*}
u_\epsilon(x_1+n,x_2)=\frac{1}{2\pi\i}\oint_{\UC}w_\epsilon(z,x)z^{n-1}\d z.
\end{equation*}
But when $\epsilon\rightarrow 0^+$, as the poles in $S_\pm^0(\epsilon)$ approach $\UC$, the integral becomes irregular. From Theorem \ref{th:inv_w_u}, we are aimed \high{at finding} out a proper curve $\mathcal{C}$ to replace the unit circle such that the function $w_\epsilon(z,x)$ is well-posed and converges uniformly with respect to $z$ when $\epsilon\rightarrow 0$.
From the properties of the sets $S_\pm^0(\epsilon)$, $RS(\epsilon)$ and $LS(\epsilon)$, we define a \high{convenient} curve $\mathcal{C}_0$ as follows. 

\begin{definition}\label{def:C_0}
 Let the piecewise analytic curve $\mathcal{C}_0$ be defined as the boundary of the following domain :
\begin{equation*}
B_\delta:=B(0,1)\bigcup\left[\bigcup_{n=1}^Q B(z_j^+,\delta)\right]\setminus\left[\bigcup_{n=1}^Q \overline{B(z_j^-,\delta)}\right],
\end{equation*}
where  $\delta>0$ is sufficiently small such that the following conditions are satisfied:
\begin{itemize}
\item For any $j\neq \ell$,  $B(z_j^\pm,\delta)\cap B(z_\ell^\pm,\delta)=\emptyset$.
\item $\left[\bigcup_{n=1}^Q B(z_j^+,\delta)\right]\bigcup\left[\bigcup_{n=1}^Q B(z_j^-,\delta)\right]\subset T_\tau$, i.e., the balls do not contain any point in $RS\bigcup LS$.
\end{itemize}
\end{definition}

%\begin{remark}
%	If $z_j^+=z_\ell^+$ for some $j\neq\ell$, then the two balls $B(z_j^+,\delta)$ and $B(z_\ell^+,\delta)$ are identical. In this case, we can still use $B_\delta$ in Definition \ref{def:C_0}. 
%\end{remark}

From Assumption \ref{asp2} and Lemma \ref{th:s_pm_dist}, 
we can always find a proper parameter $\delta$ such that both of the conditions are satisfied.
We refer to Figure \ref{fig:curve} for a visualization of  $\mathcal{C}_0$ for the example when $n=1$ and $k^2=3\pi^2$.

Thus from Lemma \ref{th:curve_z} and \ref{th:curve_z_inside}, there is a constant $C=C(\delta)>0$ such that 
\begin{equation*}
d\left(\mathcal{C}_0,\,RS(\epsilon)\bigcup LS(\epsilon)\bigcup S_+^0(\epsilon)\bigcup S_-^0(\epsilon)\right)>C(\delta)
\end{equation*}
holds uniformly for any fixed sufficiently small $\epsilon>0$, 
where $d(X,Y)$ is the Hausdorff distance between two subsets $X,\,Y\subset\C$. From the choice of the curve $\mathcal{C}_0$, the interior of the symmetric difference of $B_\delta$ and $B(0,1)$ is 
\begin{equation*}
 \Big[\bigcup_{n=1}^Q\left(B(z_n^+,\delta)\setminus\overline{B(0,1)}\right)\Big]\bigcup\Big[\bigcup_{n=1}^Q\left(B(z_n^-,\delta)\cap{B(0,1)}\right)\Big].
\end{equation*}
As for sufficiently small $\epsilon>0$, $(I-\K_z^\epsilon)^{-1}$ depends analytically on $z$ in the above domain, from Cauchy integral theorem, $u_\epsilon$ has the equivalent formulation
\begin{equation*}
u_\epsilon(x_1+n,x_2)=\frac{1}{2\pi\i}\oint_{\mathcal{C}_0}w_\epsilon(z,x)z^{n-1}\d z.
\end{equation*}
%We can replace $\UC$ by $\mathcal{C}_0$:
%\begin{equation*}
%u_\epsilon(x_1+n,x_2)=\frac{1}{2\pi\i}\oint_{\mathcal{C}_0}w(z,x)z^{n-1}\d z.
%\end{equation*}

\begin{figure}[ht]
\centering
\begin{tabular}{c  c}
\includegraphics[width=0.45\textwidth]{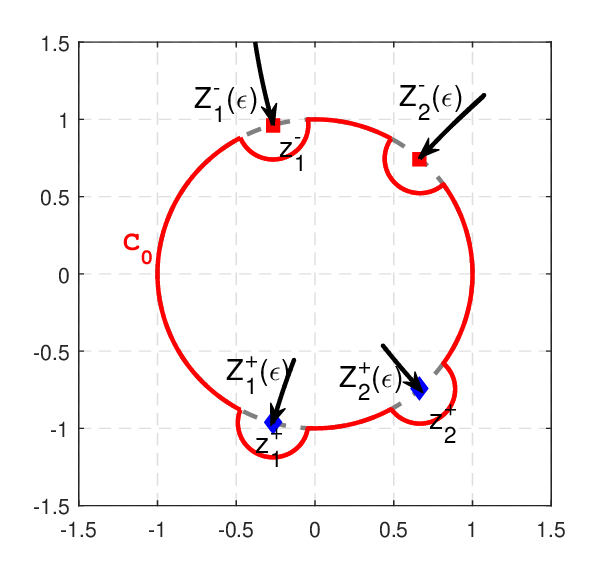} 
& \includegraphics[width=0.45\textwidth]{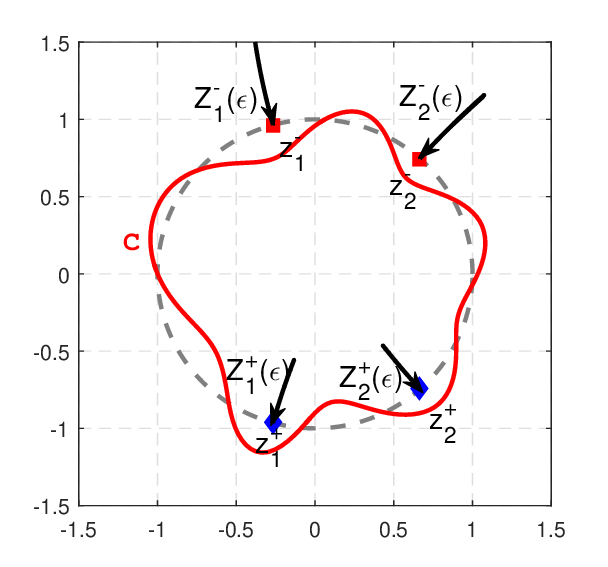}\\[-0cm]
\end{tabular}
\caption{Left: the curve $\mathcal{C}_0$; Right: a choice of $\mathcal{C}$. Red solid curves: $Z_j^\pm(\epsilon)$; grey dashed curves: the unit circle.  The red rectangles are points in $S^0_-$ and the blue diamonds are points in $S^0_+$. The arrows show the direction of the poles when $\epsilon\rightarrow 0$.}
\label{fig:curve}
\end{figure}

\begin{theorem}\label{th:lim_w_eps}
When $\epsilon>0$ is sufficiently small, the function $w_\epsilon(z,x)$ \high{is analytic in an open neighbourhood of $\mathcal{C}_0$} and is uniformly bounded with respect to $\epsilon$. 
Moreover, \begin{equation*}
\lim_{\epsilon\rightarrow 0^+}w_\epsilon(z,\cdot)=w(z,\cdot)\quad\text{ in }H^1(\Omega_0)
\end{equation*}
uniformly for $z$ in the neighourhood of $\mathcal{C}_0$ for any $\gamma_1\in(0,\gamma)$.
\end{theorem}

\begin{proof}
As $d\left(\mathcal{C}_0,\,RS(\epsilon)\bigcup LS(\epsilon)\bigcup S_+^0(\epsilon)\bigcup S_-^0(\epsilon)\right)>C(\delta)$, there is a neighbourhood of $\mathcal{C}_0$ such that $I-\K_z^\epsilon$ is invertible for any $z$ in the neighourhood and $\epsilon$ sufficiently small. Thus $w_\epsilon(z,\cdot)$ is uniformly bounded in $H^1(\Omega_0)$. % The inverse operator $\left(I-\B_z^\epsilon\right)^{-1}$ is uniformly bounded with respect to $\epsilon$ due to the perturbation theory. %Moreover, as $\overline{T_{\gamma_1}}$ is piecewise analytic, the function $w_\epsilon(z,\cdot)$ depends piecewise analytically on $z$ as well.
The limit of $w_\epsilon(z,\cdot)$ is proved by the continuity with respect to $\epsilon$. The proof is finished.
\end{proof}

With the above result, we obtain the following integral representation of the solution $u$ \high{of \eqref{eq:wg1}-\eqref{eq:wg2}}.

\begin{theorem}\label{th:LAP}
Suppose Assumption \ref{asp1} and  \ref{asp2} are satisfied, and $\mathcal{C}_0$ is defined by Definition \ref{def:C_0}. Given any compactly supported function $f\in L^2(\high{\Omega})$, and $u_\epsilon\in H^1(\Omega)$ is the unique solution of \eqref{eq:wg_ep1}-\eqref{eq:wg_ep2}. Then
\begin{equation*}
\lim_{\epsilon\rightarrow 0^+}u_\epsilon=u\quad\text{ in }H^1_{loc}(\Omega),
\end{equation*}
where the LAP solution $u$ has the integral representation
\begin{equation}\label{eq:u_LAP}
 u(x_1+n,x_2)=\frac{1}{2\pi\i}\oint_{\mathcal{C}_0}w(z,x)z^{n-1}\d z.
\end{equation}
Moreover, for any $n\in\Z$, there is a constant $C=C(n)>0$ such that
\begin{equation}\label{eq:bdd}
 \|u\|_{H^1(\Omega_n)}\leq C \|f\|_{L^2(\Omega_0)}.
\end{equation}
Moreover, from regularity of elliptic equations, the LAP solution $u\in H^2_{loc}(\Omega)$.
\end{theorem}

\begin{proof}
From Lemma \ref{th:lim_w_eps}, $w_\epsilon(z,\cdot)$ is uniformly bounded with respect to $\epsilon$ and $z$, and converges to $w(z,\cdot)$ in $H^1_{loc}(\Omega)$. Then from the Lebesgue's Dominated Convergence theorem, exchange the limit and integral, \eqref{eq:u_LAP} is proved. From the uniform boundedness of the operator $(I-\B_z)^{-1}$, the function $w(z,\cdot)$ is also uniformly bounded in $H^1(\Omega_0)$ with respect to $z$. Thus \eqref{eq:bdd} is proved for any fixed $n\in\Z$. The proof is finished. 
\end{proof}

We can also replace $\mathcal{C}_0$ by any closed curve that lies in the neighbourhood of $\mathcal{C}_0$ and enclose zero and all poles in $S_+$, but no poles in $S_-$ (see Figure \ref{fig:curve} (right)). The left curve is $\mathcal{C}_0$, and the right is a choice of $\mathcal{C}$. Thus 
\begin{equation}\label{eq:inv_u_eps}
u(x_1+n,x_2)=\frac{1}{2\pi\i}\oint_{\mathcal{C}}w(z,x)z^{n-1}\d z.
\end{equation}

Now we have already  formulate the LAP solution directly from  cell problems \eqref{eq:wg_z1}-\eqref{eq:wg_z2} without the LAP process. This also provides a nice and clear formulation for the numerical scheme. However, we still need to know the dispersion diagram to find the correct curve $\mathcal{C}_0$ (or $\mathcal{C}$).

\subsection{Propagating Bloch wave solutions}
\label{sec:prop_bloch}

From the integral representation of the LAP solution in \eqref{eq:inv_u_eps}, we also have explicit formulations for propagating Bloch wave solutions. Recall that  
\[S_\pm^0=\{z_{j,\ell}^\pm:\,j=1,\dots,Q;\,\ell=1,\dots,L_j\},%\quad S_-^0=\{z_{j,\ell}^-:\,j=1,\dots,Q;\,\ell=1,\dots,L_j\},
\]
and \high{correspondingly}  
\[
P_\pm=\{\alpha_{j,\ell}^\pm:\,j=1,\dots,Q;\,\ell=1,\dots,L_j\}.%\quad P_-=\{\alpha_{j,\ell}^-:\,j=1,\dots,Q;\,\ell=1,\dots,L_j\}.
\]
Let $\mu_{j,\ell}^\pm(\alpha)$ ($j=1,\dots,Q$ and $\ell=1,\dots,L_j$) be the eigenvalues that depend on $\alpha$, and $\mu_{j,\ell}^+(\alpha_{j,\ell}^+)=\mu_{j,\ell}^-(\alpha_{j,\ell}^-)=k^2$. As Assumption \ref{asp1} holds, $\left(\mu_{j,\ell}^+\right)'(\alpha_{j,\ell}^+)>0$ and $\left(\mu_{j,\ell}^-\right)'(\alpha_{j,\ell}^-)<0$. Let $\mu_\ell(\alpha)$ be other eigenvalues where $\ell=Q+1,\dots$. Then $\mu_\ell(\alpha)\neq k^2$ for all $\alpha\in(-\pi,\pi]$. Let $\psi_{j,\ell}^\pm(\alpha,\cdot)$ ($j=1,\dots,Q$ and $\ell=1,\dots,L_j$) and $\psi_j(\alpha,\cdot)$ ($j=Q+1,\dots$) be  corresponding eigenfunctions. Replace $z$ by $\exp(\i\alpha)$, 
from the definition of resolvent, for $x\in\Omega_0$:
\[
\begin{aligned}
\high{\widetilde{w}}(\alpha,x)=&\sum_{j=1}^Q\sum_{\ell=1}^{L_j}\frac{\left<q^{-1}f,\phi_{j,\ell}^+(\alpha,\cdot)\right>_{L^2,q}}{\mu_{j,\ell}^+(\alpha)-k^2}\phi_{j,\ell}^+(\alpha,\cdot)\\
&+\sum_{j=1}^Q\sum_{\ell=1}^{L_j}\frac{\left<q^{-1}f,\phi_{j,\ell}^-(\alpha,\cdot)\right>_{L^2,q}}{\mu_{j,\ell}^-(\alpha)-k^2}\phi_{j,\ell}^-(\alpha,\cdot)\\&+\sum_{j=Q+1}^\infty\frac{\left<q^{-1}f,\phi_j(\alpha,\cdot)\right>_{L^2,q}}{\mu_j(\alpha)-k^2}\phi_j(\alpha,\cdot)\\
&=\sum_{j=1}^Q\sum_{\ell=1}^{L_j}\frac{\left<f,\phi_{j,\ell}^+(\alpha,\cdot)\right>}{\mu_{j,\ell}^+(\alpha)-k^2}\phi_{j,\ell}^+(\alpha,\cdot)+\sum_{j=1}^Q\sum_{\ell=1}^{L_j}\frac{\left<f,\phi_{j,\ell}^-(\alpha,\cdot)\right>}{\mu_{j,\ell}^-(\alpha)-k^2}\phi_{j,\ell}^-(\alpha,\cdot)\\&+\sum_{j=Q+1}^\infty\frac{\left<f,\phi_j(\alpha,\cdot)\right>}{\mu_j(\alpha)-k^2}\phi_j(\alpha,\cdot),
\end{aligned}
\]
where $\left<\cdot,\cdot\right>$ is the inner product in $L^2(\Omega_0)$.
%for $x\in\Omega_0$. 
%Thus
%\[
% \begin{aligned}
% w(z,x)&=\sum_{j=1}^N\frac{\left<q^{-1}f,\phi_j^+(-\i\log(z),\cdot)\right>}{\mu_j^+(-\i\log(z))-k^2}\phi_j^+(-\i\log(z),\cdot)+\sum_{j=1}^N\frac{\left<q^{-1}f,\phi_j^-(-\i\log(z),\cdot)\right>}{\mu_j^-(-\i\log(z))-k^2}\phi_j^-(-\i\log(z),\cdot)\\
% &+\sum_{j=N+1}^\infty\frac{\left<q^{-1}f,\phi_j(-\i\log(z),\cdot)\right>}{\mu_j(-\i\log(z))-k^2}\phi_j(-\i\log(z),\cdot). 
% \end{aligned}
%\]
\begin{figure}[ht]
	\centering
	\begin{tabular}{c  c}
		\includegraphics[width=0.5\textwidth,height=0.36\textwidth]{alpha_space.eps} 
		& \includegraphics[width=0.4\textwidth,height=0.36\textwidth]{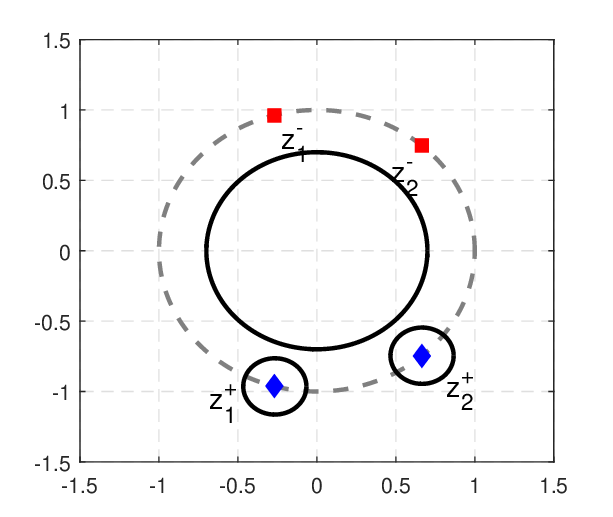}\\[-0cm]
	\end{tabular}
	\caption{Left: dispersion diagram; right: the new integral curve.}
	\label{fig:integral_curve_decomp}
\end{figure}

\high{We define, for sufficiently small $\delta>0$, the integral curve $\mathcal{C}_0$ as in Definition \ref{def:C_0}.} Then from Theorem \ref{th:LAP} and Cauchy's integral formula,
\[
\begin{aligned}
 u(x_1,x_2)&=\frac{1}{2\pi\i}\oint_{\mathcal{C}_0}w(z,x)z^{-1}\d z\\
 &=\frac{1}{2\pi\i}\oint_{|z|=\exp(-\tau)} w(z,x)z^{-1}\d z\\
 &\qquad+\sum_{j=1}^Q \left[\frac{1}{2\pi\i}\oint_{|z-z_j^+|=\delta} w(z,x)z^{-1}\d z\right],\quad x\in\Omega_0\\
 \end{aligned}
\]
For the visualization of the integral curve we refer to Fig \ref{fig:integral_curve_decomp}. 
The first term is evanescent and we only consider the second term. For any fixed $j=1,2,\dots,Q$, let
\[
 u^+_j(x):=\frac{1}{2\pi\i}\oint_{|z-z_j^+|=\delta} w(z,x)z^{-1}\d z,\quad x\in\Omega_0.
\]
From the representation of \high{$\widetilde{w}(\alpha,x):=w(-\i\log(z),x)$}, as the transform $\alpha:=-\i\log(z)$ maps the ball $B(z_j^+,\delta)$ into a small neighourhood of $\alpha_j^+$, denoted by $\mathcal{N}(\alpha_j^+,\delta)$, then $u_j$ becomes
\[
 u_j^+(x)=\frac{1}{2\pi}\oint_{\partial \mathcal{N}(\alpha_j^+,\delta)}\high{\widetilde{w}}(\alpha,x)\d\alpha.
\]
Note that the neighbourhood $\mathcal{N}(\alpha_j^+,\delta)$ does not contain any other points in $P_\pm$ if $\delta>0$ is sufficiently small. Then from the representation of $\high{\widetilde{w}}(\alpha,x)$, only the $j$-th term in the first series has a pole in $\mathcal{N}(\alpha_j^+,\delta)$. From the representation of the resolvent and residue theorem,
\[
 u_j^+(x)=\i{\rm Res}\left(\high{\widetilde{w}}(\alpha,x),\alpha=\alpha_j^+\right)=\sum_{\ell=1}^{L_j}u_{j,\ell}^+,\]
 where\[ u_{j,\ell}^+=\frac{\i\left<f,\phi_{j,\ell}^+(\alpha_j^+,\cdot)\right>}{(\mu_{j,\ell}^+)'(\alpha_j^+)}\phi_{j,\ell}^+(\alpha_j^+,\cdot).
\]
Note that $u_{j,\ell}^+$ is exactly the $\ell$-th Bloch wave  \high{solution corresponding to the Floquet multiplier $z_j^+$ propagating} to the right.

From the analysis above, the LAP solution can also be written as the decomposition of an evanescent function and $Q$ propagating modes:
\[
\begin{aligned}
u(x_1,x_2)=&\frac{1}{2\pi\i}\oint_{|z|=\exp(-\tau)} w(z,x)z^{-1}\d z\\&\qquad+\i\left[\sum_{j=1}^Q\sum_{\ell=1}^{L_j}\frac{\left<f,\psi_{j,\ell}^+(\alpha_j^+,\cdot)\right>}{(\mu_{j,\ell}^+)'(\alpha_j^+)}\psi_{j,\ell}^+(\alpha_j^+,\cdot)\right],\quad x\in\Omega_0.
\end{aligned}
\]
We can also extend the representation to $\Omega_n$ where $n\in\Z$:
\begin{equation}
\label{eq:LAP_sep}
u(x_1+n,x_2)=\begin{cases}
\displaystyle
\begin{aligned}&  \i\left[\sum_{j=1}^Q\sum_{\ell=1}^{L_j}\frac{\exp(\i n\alpha_j^+)\left<f,\psi_{j,\ell}^+(\alpha_j^+,\cdot)\right>}{(\mu_{j,\ell}^+)'(\alpha_j^+)}\psi_{j,\ell}^+(\alpha_j^+,\cdot)\right]      \\&\,+
\frac{1}{2\pi\i}\oint_{|z|=\exp(-\tau)} w(z,x)z^{n-1}\d z,\qquad   \qquad\qquad\qquad n\geq 0;
\end{aligned}\\
\\
\displaystyle
\begin{aligned}    &  \i\left[\sum_{j=1}^Q\sum_{\ell=1}^{L_j}\frac{\exp(\i n\alpha_j^-)\left<f,\psi_{j,\ell}^-(\alpha_j^-,\cdot)\right>}{(\mu_{j,\ell}^-)'(\alpha_j^-)}\psi_{j,\ell}^-(\alpha_j^-,\cdot)\right]   \\&\,+\frac{1}{2\pi\i}\oint_{|z|=\exp(\tau)} w(z,x)z^{n-1}\d z,\qquad \qquad\qquad\qquad \quad  n< 0.\end{aligned}
\end{cases}
\end{equation}

%\subsection{Multiple eigenvalues}\label{sec:multi_eig}

%In this subsection, we extend the results to the case that the assumptions in Remark \ref{rm:eig_diff} no longer hold.

\section{Numerical scheme}
\label{sec:numer_scheme}

In this section, we consider two numerical methods to approximate LAP solutions of \eqref{eq:wg1}-\eqref{eq:wg2}. The first method (CCI-method) is based on the complex curve integral \eqref{eq:u_LAP} and the second method (PM-method) is based on the propagating modes \eqref{eq:LAP_sep}. Both cases involve numerical integral of solutions $w(z,x)\big|_{\mathcal{C}_0}$ of quasi-periodic \eqref{eq:wg_z1}-\eqref{eq:wg_z2} (or equivalently, the periodic problem \eqref{eq:quasi_periodic_var}). As the quasi-periodic problems are well-posed, they can be solved by classic numerical methods.

We apply the finite element method to solve the quasi-periodic problem \eqref{eq:quasi_periodic_var}. Suppose $\Omega_0$ is covered by a family of regular and quasi-uniform meshes (see \cite{Brenn1994,Saute2007}) $\mathcal{M}_h$ with the largest mesh width $h_0>0$. To construct periodic nodal functions, we suppose that the nodal points on the left and right boundaries have the same height. Omitting the nodal points on the right boundary, let $\left\{\phi_M^{(\ell)}\right\}$ be the piecewise linear nodal functions that equals to one at the $\ell$-th nodal and zero at other nodal points, then it is easily extended into a globally continuous and periodic function in $H^1_{loc}(\Omega)$. Define the discretization subspace by:
\begin{equation*}
V_h:={\rm span}\left\{\phi_M^{(1)},\phi_M^{(2)},\dots,\phi_M^{(M)}\right\}\subset H^1_\p(\Omega_0).
\end{equation*}
Thus we are looking for the finite element solution to \eqref{eq:quasi_periodic_var} with the expansion 
\begin{equation*}
v_z^h(x)=\sum_{\ell=1}^M c_z^\ell\phi_m^{(\ell)}(x)
\end{equation*}
satisfies
\begin{equation*}
a_z(v_z^h,\phi_h)=-\int_{\Omega_0}z^{-x_1}f\overline{\phi_h}\d x,\quad\forall\,\phi_h\in V_h.
\end{equation*}

By Theorem 14 in \cite{Lechl2016a} the finite element approximation is estimated as follows.

\begin{theorem}\label{th:est_v_z}
	Suppose $f\in L^2(\Omega_0)$ and $q\in W^{1,\infty}(\Omega_0)$, the solution $v_z\in H^2(\Omega_0)$ and the error between $v_z^h$ and $v_z$ is bounded by
	\begin{equation*}
	\|v_z^h-v_z\|_{L^2(\Omega_0)}\leq Ch^2\|f\|_{L^2(\Omega_0)},\quad \|v_z^h-v_z\|_{H^1_\p(\Omega_0)}\leq Ch\|f\|_{L^2(\Omega_0)}
	\end{equation*}
	where $C>0$ is a constant independent of $z\in\mathcal{C}_0$. The estimations are also true for the function $w(z,x)=(\F u)(z,x)$ defined in Section 4.2 when $z\in\mathcal{C}_0$, i.e.,
	\begin{eqnarray*}
		&& \|w_h(z,\cdot)-w(z,\cdot)\|_{L^2(\Omega_0)}\leq Ch^2\|f\|_{L^2(\Omega_0)};\\
		&& \|w_h(z,\cdot)-w(z,\cdot)\|_{H^1_\p(\Omega_0)}\leq Ch\|f\|_{L^2(\Omega_0)}.
	\end{eqnarray*}
\end{theorem}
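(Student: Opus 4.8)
The plan is to reduce everything to the classical Schatz duality argument for Gårding-type sesquilinear forms, and then to promote all constants and the mesh threshold to quantities that are uniform in $z$ by exploiting the compactness of $\mathcal{C}_0$ together with the continuous dependence of $a_z$ on $z$ furnished by Corollary \ref{th:ana_depen}. The pointwise-in-$z$ estimate is exactly the content of Theorem 14 in \cite{Lechl2016a}; the real work is the uniformity.

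First I would record a Gårding inequality for $a_z$. The principal part of $a_z$ is $\int_{\Omega_0}|\grad v|^2\,dx$, while the remaining terms carry the coefficients $\log(z)$, $\log^2(z)$ and $k^2q$, all of which are bounded uniformly for $z$ on the compact curve $\mathcal{C}_0$. Testing with $\phi=v$, taking real parts, and absorbing the first-order term by Young's inequality yields constants $c_0>0$, $C_0>0$, independent of $z\in\mathcal{C}_0$, such that
\begin{equation*}
 \Re a_z(\phi,\phi)\geq c_0\|\phi\|_{H^1_\p(\Omega_0)}^2-C_0\|\phi\|_{L^2(\Omega_0)}^2,\qquad \phi\in H^1_\p(\Omega_0).
\end{equation*}
Since $\mathcal{C}_0\subset\C\setminus(\mathbb{F}\cup\{0\})$, the operator $I-\K_z$ is invertible with $(I-\K_z)^{-1}$ bounded uniformly on $\mathcal{C}_0$ (Corollary \ref{th:ana_depen}), so the continuous problem is well posed with a uniform stability constant.

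Next I would establish $H^2$ regularity with a $z$-uniform bound. For fixed $z$ the equation for $v_z$ is second order and elliptic with principal part the Laplacian, constant lower-order coefficients, right-hand side $z^{-x_1}f\in L^2(\Omega_0)$, and Neumann/periodic boundary conditions; standard elliptic regularity gives $v_z\in H^2(\Omega_0)$ with $\|v_z\|_{H^2(\Omega_0)}\leq C\|f\|_{H^1(\Omega_0)}$, and because the coefficients vary continuously with $z$ over the compact $\mathcal{C}_0$ the constant $C$ is independent of $z$. The same regularity holds for the adjoint form $a_z^\ast$, which is what the duality arguments below require. With these ingredients Theorem 14 in \cite{Lechl2016a} supplies, for each fixed $z$, a threshold $h(z)>0$ below which $V_h$ inherits a discrete inf-sup condition; Céa's lemma then yields the $H^1_\p$ error estimate and the Aubin--Nitsche duality argument the $L^2$ error estimate, both controlled by $\|v_z\|_{H^2(\Omega_0)}\leq C\|f\|_{H^1(\Omega_0)}$.

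The main obstacle is precisely the uniformity: a priori the threshold $h(z)$ and the stability constant produced by Theorem 14 could degenerate as $z$ runs along $\mathcal{C}_0$. To close this I would show that all the governing quantities --- the Gårding constants $c_0,C_0$, the norm $\|(I-\K_z)^{-1}\|$, and the $H^2$-regularity constant --- depend continuously on $z$, which follows from the analytic dependence in Corollary \ref{th:ana_depen}; a finite covering of the compact curve $\mathcal{C}_0$ then produces a single $h_0>0$ and a single constant $C$ valid simultaneously for all $z\in\mathcal{C}_0$. Finally, the transfer from $v_z$ to $w(z,\cdot)=z^{x_1}v_z$ is immediate: on $\mathcal{C}_0$ the modulus $|z|$ is bounded and bounded away from $0$, so multiplication by $z^{x_1}$ and its spatial derivatives are bounded uniformly over the bounded cell $\Omega_0$, whence $\|w(z,\cdot)-w_h(z,\cdot)\|=\|z^{x_1}(v_z-v_z^h)\|\leq C\|v_z-v_z^h\|$ in both the $L^2(\Omega_0)$ and $H^1_\p(\Omega_0)$ norms, transferring the two estimates verbatim.
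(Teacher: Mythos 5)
Your proposal is correct in substance, but it necessarily takes a different route from the paper, because the paper offers essentially no proof: Theorem \ref{th:est_v_z} is justified there solely by the citation ``Recall Theorem 14 in \cite{Lechl2016a}''. What you do is reconstruct the content of that citation --- uniform G\aa rding inequality for $a_z$, $H^2$ regularity with a $z$-uniform constant, Schatz quasi-optimality below a mesh threshold, Aubin--Nitsche duality --- and, more importantly, you supply the one ingredient a bare citation does not obviously cover: results of this type are naturally stated for quasi-periodicities on the unit circle, whereas here $z$ runs over the deformed contour $\mathcal{C}_0$, which leaves $\UC$ near the points of $US$. Your compactness-and-continuity argument (uniform boundedness of $(I-\K_z)^{-1}$ from Corollary \ref{th:ana_depen}, a finite covering of the compact curve to extract a single threshold $h_0$ and a single constant $C$) is exactly what is needed to make the constants independent of $z\in\mathcal{C}_0$, and the paper leaves this entirely implicit. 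Two small cautions: Corollary \ref{th:ana_depen} is stated for a single injective analytic parameterization, so it should be invoked separately on each of the finitely many analytic segments $\mathcal{C}_j$ and $\mathcal{D}_j$ of $\mathcal{C}_0$; and your transfer step via $w(z,\cdot)=z^{x_1}v_z$, using that $|z|$ and $|\log z|$ are bounded above and $|z|$ bounded away from $0$ on $\mathcal{C}_0$, is the right mechanism for the second pair of estimates (with $w_h(z,\cdot):=z^{x_1}v_z^h$, which the paper never defines explicitly).

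One discrepancy deserves flagging. Your argument, as you correctly describe it (C\'ea/Schatz for the energy norm, duality for $L^2$), proves $\|v_z^h-v_z\|_{H^1_\p(\Omega_0)}\leq Ch\|f\|_{H^1(\Omega_0)}$, not the $Ch^2$ printed in the theorem. For the piecewise linear elements used in this paper, an $O(h^2)$ rate in the $H^1$ norm is impossible in general, since it would violate the approximation lower bound for $P_1$ finite element spaces; and indeed the paper's own subsequent theorem concludes $\|u_{N,h}-u\|_{H^1(\Omega_0)}\leq C\left(h+N^{-N_0+1/2}\right)\|f\|_{H^1(\Omega_0)}$, consistent only with a first-order $H^1$ rate. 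The exponent $2$ in the two $H^1_\p$ bounds of Theorem \ref{th:est_v_z} is therefore a typo, and your proof establishes the corrected statement --- which is the version the paper actually uses downstream.
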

In both of the following methods, $w_h(z,\cdot)$ is the numerical approximation of the  quasi-periodic solution $w(z,\cdot)$.

%With the finite element solution $w_h(z,\cdot)$, we introduce the two methods. 

\subsection{CCI-method}
\label{sec:cci}

In this section, we consider numerical approximation of the curve integral  \eqref{eq:u_LAP}. $w(z,\cdot)$ is supposed to be known as it is easily computed by standard numericl methods. We can alway choose different $n$'s such that computations are carried out in different $\Omega_n$'s, but in this section $n$ is fixed as $0$ as an example. As the curve integral on $\UC$ is standard, we only consider the \high{case $k^2\in\sigma(A)$}, so the LAP solution $u$ is written in the form of \eqref{eq:u_LAP}.

%From the definition of $\mathcal{C}_0$, it is  piecewise analytic. Thus $w(z,x)$ depends piecewise analytically on $z$, and the corners of $\mathcal{C}_0$ are also the singularities of $w(z,x)$. 
Recall that the number of different values of elements in $S_+^0$ is $Q$, so is it in $S_-^0$. 
Thus $\mathbb{UF}=S_+^0\bigcup S_-^0$ has $2Q$ different values when Assumption \ref{asp2} holds. Especially, from  Remark \ref{rm:asp2} and Assumption \ref{asp2}, neither $1$ nor $-1$ lies  in \high{$\mathbb{UF}$}. Then the curve $\mathcal{C}_0$ is parameterized as follows.

Suppose  \high{$\mathbb{UF}=\left\{\varepsilon_j:\, j=1,2,\dots,2Q\right\}$} and
\begin{equation*}
 -\pi<\varepsilon_1<\varepsilon_2<\cdots <\varepsilon_{2Q-1}<\varepsilon_{2Q}<\pi.
\end{equation*}
By assumption, for any $j=1,\dots,2Q$, there is a sufficiently small $\delta>0$ such that the ball $B\big(\exp(\i\varepsilon_j),\delta\big)$ does not contain any other poles except for $\exp(\i\varepsilon_j)$ itself. Let $\varepsilon_j^-<\varepsilon_j^+$ be two angles such that $\left\{\exp(\i\varepsilon_j^+),\exp(\i\varepsilon_j^-)\right\}=\partial B\big(\exp(\i\varepsilon_j),\delta\big)\cap \UC$. From direct calculation,
\begin{equation}
\high{ \varepsilon_j^-=\varepsilon_j-2\arcsin\left(\delta/2\right),\quad \varepsilon_j^+=\varepsilon_j+2\arcsin\left(\delta/2\right).}
\end{equation}
Note that we choose $\delta$ and $\delta$ sufficiently small such that $\varepsilon_1^->-\pi$ and $\varepsilon_{2Q}^+<\pi$. Moreover,
\begin{equation}
 -\pi<\,\varepsilon_1^-<\varepsilon_1<\varepsilon_1^+\,<\,\varepsilon_2^-<\varepsilon_2<\varepsilon_2^+\,<\,\cdots\,<\,\varepsilon_{2Q}^-<\varepsilon_{2Q}<\varepsilon_{2Q}^+\,<\,\pi.
\end{equation}
Thus the curve segment of $\UC\cap\mathcal{C}_0$ is parameterized as
\begin{equation*}
 \mathcal{C}_j:=\left\{\exp(\i\varepsilon):\,\varepsilon\in[\varepsilon_j^+,\varepsilon_{j+1}^-]\right\},\quad j=1,2,\dots,2Q,
\end{equation*}
 where $\varepsilon_{2Q+1}^-:=\varepsilon_1^-+2\pi$.
For any $j\in \{1,2,\dots,2Q\}$,  $\mathcal{C}_0\cap\partial B\big(\exp(\i\varepsilon_j),\delta\big)$ lies in the exterior when $\exp(\i\varepsilon_j)\in S_+^0$; while $\mathcal{C}_0\cap\partial B\big(\exp(\i\varepsilon_j),\delta\big)$ lies in the interior when $\exp(\i\varepsilon_j)\in S_-^0$. Let this curve segment be represented by $\exp(\i\varepsilon_j)+\delta\exp(\i\theta)$. From direct calculation, there are two angles  $\theta_j^\pm$ such that $\left\{\exp(\i\varepsilon_j^+),\exp(\i\varepsilon_j^-)\right\}=\left\{\exp(\i\varepsilon_j)+\delta\exp(\i\theta_j^-),\exp(\i\varepsilon_j)+\delta\exp(\i\theta_j^+)\right\}$.% where 
%\begin{equation}
% \theta_j^\pm=\alpha_j\pm\arccos\left(-\frac{\delta}{2}\right).
%\end{equation}
By choosing proper branches of the logarithmic function, $\theta_j^\pm$ satisfy
\begin{equation*}
 \theta_j^-<\theta_j^+<\theta_j^-+2\pi,
\end{equation*}
such that
\begin{eqnarray*}
 && \big|\exp(\i\varepsilon_j)+\delta\exp(\i\theta)\big|>1,\,\text{ when }\exp(\i\varepsilon_j) \in S_+^0;\\
 && \big|\exp(\i\varepsilon_j)+\delta\exp(\i\theta)\big|<1,\,\text{ when }\exp(\i\varepsilon_j) \in S_-^0.
\end{eqnarray*}
Thus the curve segment $\mathcal{C}_0\cap\partial B(\exp(\i\varepsilon_j),\delta)$ is parameterized as
\begin{equation*}
 \mathcal{D}_{j}=\exp(\i\varepsilon_j)+\delta\exp(\i\varepsilon),\quad \theta\in(\theta_j^-,\theta_j^+),\quad j=1,2,\dots,2Q.
\end{equation*}

Now the whole curve $\mathcal{C}_0$ %is written as 
%\begin{equation*}
% \mathcal{C}_0=\left[\bigcup_{j=1}^{2Q}\mathcal{C}_j\right]\bigcup\left[\bigcup_{j=1}^{2Q}\mathcal{D}_j\right],
%\end{equation*}
%and it 
is parametrized piecewisely. Thus the representation of $u$ in \eqref{eq:u_LAP} is written as
%\begin{equation*}
\begin{align*}
u(x)&=\frac{1}{2\pi\i}\sum_{j=1}^{2Q}\int_{\mathcal{C}_j}w(z,x)z^{-1}\d z+\frac{1}{2\pi\i}\sum_{j=1}^{2Q}\int_{\mathcal{D}_j}w(z,x)z^{-1}\d z\\
&=\frac{1}{2\pi}\int_{\alpha_j^+}^{\alpha_{j+1}^-}w(\exp(\i t),x)\d t\\
&+\frac{1}{2\pi}\int_{\theta_j^-}^{\theta_j^+}w(\exp(\i\alpha_j)+\delta\exp(\i t),x)\frac{\delta\exp(\i t)}{\exp(\i\alpha_j)+\delta\exp(\i t)}\d t
\end{align*}
%\end{equation*}
%\begin{equation*}
% u(x)=\sum_{j=1}^{2Q}\I_j+\sum_{j=1}^{2Q}\J_j,
%\end{equation*}
% where for any $j=1,2,\dots,2Q$, 
% \begin{equation}
%  \I_j:=\frac{1}{2\pi\i}\int_{\mathcal{C}_j}w(z,x)z^{-1}\d z;\quad 
%  \J_j:=\frac{1}{2\pi\i}\int_{\mathcal{D}_j}w(z,x)z^{-1}\d z.
% \end{equation}

% From the parametrization, the integrals are written into the following forms:
% \begin{eqnarray}
%  && \I_j=\frac{1}{2\pi}\int_{\alpha_j^+}^{\alpha_{j+1}^-}w(\exp(\i t),x)\d t;\\
%  && \J_j=\frac{1}{2\pi}\int_{\theta_j^-}^{\theta_j^+}w(\exp(\i\alpha_j)+\delta\exp(\i t),x)\frac{\delta\exp(\i t)}{\exp(\i\alpha_j)+\delta\exp(\i t)}\d t.
% \end{eqnarray}
All of the integrands depend smoothly on $t$. Thus we only need to consider the numerical integration
\begin{equation*}
 \I(g):=\int_a^b g(t,x)\d t,
\end{equation*}
where $a<b$ and $g$ depends analytically on $t\in(a,b)$ and $g(t,\cdot)\in H^2(\Omega_0)$ for any fixed $t$. 

%\subsection{Numerical method for integral in an interval}
%\label{sec:numer_integ}

For an efficient numerical integral, we adopt the method introduced in \cite{Zhang2017e}, which comes originally from Section 3.5, \cite{Colto1998}. Let $q(\tau):\,(a,b)\rightarrow [-\pi,\pi]$ be a smooth and strictly monotonically increasing function and satisfies 
\begin{equation}\label{eq:q}
 q(-\pi)=a;\,q(\pi)=b;\,q^{(\ell)}(-\pi)=q^{(\ell)}(\pi)=0,\,\ell=1,2,\dots,N_0
\end{equation}
for some positive integer $N_0$. Let $t=q(\tau)$, the integral $\I(g)$ becomes
\begin{equation*}
 \I(g)=\int_{-\pi}^{\pi} g(q(\tau),x)q'(\tau)\d \tau:=\int_{-\pi}^{\pi} s(\tau,x)\d\tau.
\end{equation*}
Thus the new integrand $s$ depends smoothly on $\tau$ and is extended to a periodic function with respect to $\tau$. %Define the following space
%\begin{equation*}
%\begin{aligned}
% C^n([-\pi,\pi];S(W)):=&\Bigg\{\phi\in\mathcal{D}'([-\pi,\pi]\times W):\,\forall\, \tau\in [-\pi,\pi], \, j=0,\dots,n,\Bigg.\\
% &\quad\quad \frac{\partial^j \phi(\tau,\cdot)}{\partial \tau^j}\in S(W);\text{moreover},\,\left\|\frac{\partial^j \phi(\tau,\cdot)}{\partial \tau^j}\right\|_{S(W)}\\
%&\qquad\qquad\qquad \Bigg.\text{ is uniformly bounded for }\tau\in[-\pi,\pi]\Bigg\}.
% \end{aligned}
%\end{equation*}
%The space $C^\infty([-\pi,\pi];S(W))$ of functions that depends smoothly on the first variable is defined in the similar way. Define the subspace $C^n_\p([-\pi,\pi];S(W))$ by
%\begin{equation*}
% \begin{aligned}
%  C^n_\p([-\pi,\pi];S(W)):=&\Bigg\{\phi\in C^n([-\pi,\pi]\times W):\,\forall\, \tau\in [-\pi,\pi], \, j=0,\dots,n,\Bigg.\\
% &\qquad\left.\left\|\lim_{\tau\rightarrow (-\pi)^+}\frac{\partial^j \phi(\tau,\cdot)}{\partial \tau^j}-\lim_{\tau\rightarrow \pi^-}\frac{\partial^j \phi(\tau,\cdot)}{\partial \tau^j}\right\|_{S(W)}=0\right\}.
% \end{aligned}
%\end{equation*}
%Then the function $s\in C^{N_0-1}_\p([-\pi,\pi];S(W))\cap C^\infty([-\pi,\pi];S(W))$. 

We approximate the integral $\I(g)$ by trapezoidal rule. Let $[-\pi,\pi]$ be divided uniformly into $N$ %($N$ is supposed to be an even number) 
subintervals, and the grid points be
\begin{equation*}
 t_j=-\pi+\frac{2\pi}{N}j,\quad j=1,\dots,N.
\end{equation*}
%Define the trigonometrical functions by
%\begin{equation*}
% \phi_N^{(j)}(\tau)=\frac{1}{N}\sum_{\ell=-N/2+1}^{N/2}\exp\left(\i\ell(\tau-t_j)\right),\quad j=1,2,\dots,N.
%\end{equation*}
%Then $\phi_N^{(j)}(t_\ell)=\delta_{\ell,j}$ that $\delta_{\ell,j}=1$ if and only if $\ell=j$, otherwise $\delta_{\ell,j}=0$. Moreover,
%\[
% \int_{-\pi}^\pi \phi_N^{(j)}(\tau)\d\tau=\frac{2\pi}{N},\quad j=1,2,\dots,N.
%\
%Let $s_N$ be the interpolation of $s$ in the space spanned by $\left\{\phi_N^{(\ell)}\right\}_{\ell=1}^N$, then
%\begin{equation*}
% s_N(\tau,\cdot)=\sum_{\ell=1}^N\phi_{N}^{(\ell)}(\tau)s(t_\ell,\cdot).
%\end{equation*}
The integral is approximated by
\begin{equation}\label{eq:quad}
\I_N(g)=\frac{2\pi}{N}\sum_{\ell=1}^N s(t_\ell,x)=\frac{2\pi}{N}\sum_{\ell=1}^N g(q(t_\ell),x)q'(t_\ell).
\end{equation}
Then we recall the result in \cite{Zhang2017e}, and obtain the error estimation of the integral via \eqref{eq:quad};
%\begin{theorem}[Theorem 28, \cite{Zhang2017e}]
%\label{th:approx_tri_int}
% Let $s\in C_\p^{N_0-1}([-\pi,\pi];S(W))$ for some positive integer $N_0$, then the error of the numerical integration is bounded by
 \begin{equation}\label{eq:err_quad}
  \left\|\I(g)-\I_N(g)\right\|_{H^2(\Omega_0)}\leq CN^{-N_0+1/2}\|s\|_{C^{N_0}([-\pi,\pi];S(W))}.
 \end{equation}
%\end{theorem}

We apply the quadrature rule \eqref{eq:quad} to approximate $u(x)$:% $\I_j$ and $\J_j$. For $\I_j$, let $q_j$ be the smooth function from $[\alpha_j^+,\alpha_{j+1}^-]$ to $[-\pi,\pi]$ satisfies \eqref{eq:q}. Then the numerical approximation of $\I_j$, denoted by $\I_j^N$, has the form of
%\begin{equation*}
% \I_j^N=\frac{2\pi}{N}\sum_{\ell=1}^N w(\exp(\i q_j(t_\ell)),x)q'_j(t_\ell).
%\end{equation*}
%We can also approximate $\J_j$ in the same way. Let $q_{j+2Q}$ be the smooth function from $[\theta_j^-,\theta_{j}^+]$ to $[-\pi,\pi]$, then
%\begin{equation*}
% \J_j^N=\frac{2\pi}{N}\sum_{\ell=1}^N w(\exp(\i\alpha_j)+\delta\exp(\i q_{j+2Q}(t_\ell)),x)\frac{\delta q'_{j+2Q}(t_\ell)\exp(\i q_{j+2Q}(t_\ell))}{\exp(\i\alpha_j)+\delta\exp(\i q_{j+2Q}(t_\ell))}.
%\end{equation*}
%Thus $u(x)$ is approximated  by
\begin{equation}\label{eq:appx_uN}
\begin{aligned}
 &u_N(x)=\frac{2\pi}{N}\sum_{j=1}^{2Q}\left[\sum_{\ell=1}^N w(\exp(\i q_j(t_\ell)),x)q'_j(t_\ell)\right.\\
 &\left.+\sum_{\ell=1}^N w(\exp(\i\alpha_j)+\delta\exp(\i q_{j+2Q}(t_\ell)),x)\frac{\delta q'_{j+2Q}(t_\ell)\exp(\i q_{j+2Q}(t_\ell))}{\exp(\i\alpha_j)+\delta\exp(\i q_{j+2Q}(t_\ell))}\right],
 \end{aligned}
\end{equation}
where  $q_j$ be the smooth function from $[\alpha_j^+,\alpha_{j+1}^-]$ to $[-\pi,\pi]$ and $q_{j+2Q}$ are the smooth function from $[\theta_j^-,\theta_{j}^+]$ to $[-\pi,\pi]$.
With \eqref{eq:err_quad}, we conclude that
\begin{equation}\label{eq:est_uN}
 \left\|u(x)-u_N(x)\right\|_{H^2(\Omega_0)}\leq CN^{-N_0+1/2}\|w\|_{C^{N_0}([-\pi,\pi];S(W))}.
\end{equation}

%\subsection{Finite element method}
%\label{sec:fem}

Finally, we replace $w(z,x)$  by the finite element solution $w_h(z,x)$. Then  the LAP solution is approximated by
\begin{equation}\label{eq:app_Nh}
\begin{aligned}
 u_{N,h}(x)&=\frac{1}{N}\sum_{j=1}^{2Q}\left[\sum_{\ell=1}^N w_h(\exp(\i q_j(t_\ell)),x)q'_j(t_\ell)\right.\\
 &\left.+\sum_{\ell=1}^N w_h(\exp(\i\alpha_j)+\delta\exp(\i q_j(t_\ell)),x)\frac{\delta q'_{j+2Q}(t_\ell)\exp(\i q_j(t_\ell))}{\exp(\i\alpha_j)+\delta\exp(\i q_j(t_\ell))}\right].
 \end{aligned}
\end{equation}
Then we conclude the error between $u_{N,h}$ and $u$ in the domain $\Omega_0$.

\begin{theorem}\label{th:err}
Let $u_{N,h}$ be the numerical solution comes from the finite element method and the integral approximation \eqref{eq:app_Nh}.  Then the error is bounded by
 \begin{equation}
 \begin{aligned}
 & \|u_{N,h}-u\|_{L^2(\Omega_0)}\leq C\left(N^{-N_0+1/2}+h^2\right)\|f\|_{L^2(\Omega_0)};\\& \|u_{N,h}-u\|_{H^1(\Omega_0)}\leq C\left(N^{-N_0+1/2}+h\right)\|f\|_{L^2(\Omega_0)},
  \end{aligned}
 \end{equation}
where $C$ is a constant that depends on $Q$ and $N_0$, but does not depend on $N$ and $h$. 
\end{theorem}

\begin{proof}
From the representations of $u_{N,h}$ and $u$, and also the results from \eqref{eq:est_uN}, we have the following error estimation:
 \begin{equation*}
  \begin{aligned}
   &\,\,\,\,\,\,\,\,\|u_{N,h}-u\|_{L^2(\Omega_0)}\\
   &\leq \|u_{N,h}-u_N\|_{L^2(\Omega_0)}+\|u_N-u\|_{L^2(\Omega_0)}\\
   &\leq \left\|\frac{1}{N}\sum_{j=1}^{2Q}\left[\sum_{\ell=1}^N (w_h-w)(\exp(\i q_j(t_\ell)),x)q'_j(t_\ell)\right.\right.\\
 &\left.\left.+\sum_{\ell=1}^N (w-w_h)(\exp(\i\alpha_j)+\delta\exp(\i q_j(t_\ell)),x)\frac{\delta q'_{j+2Q}(t_\ell)\exp(\i q_j(t_\ell))}{\exp(\i\alpha_j)+\delta\exp(\i q_j(t_\ell))}\right]\right\|_{L^2(\Omega_0)}\\
 &\qquad+CN^{-N_0+1/2}\|f\|_{H^1(\Omega_0)}\\
 &\leq \frac{C}{N}\sum_{j=1}^{2Q}\sum_{\ell=1}^N\left[\|(w-w_h)(\exp(\i\alpha_j)+\delta\exp(\i q_j(t_\ell)),\cdot)\|_{L^2(\Omega_0)}\right.\\
 &\qquad\left.+\|(w-w_h)(\exp(\i\alpha_j)+\delta\exp(\i q_j(t_\ell)),\cdot)\|_{L^2(\Omega_0)}\right]
  +CN^{-N_0+1/2}\|f\|_{H^1(\Omega_0)}\\
  &\leq C\left(h^2+N^{-N_0+1/2}\right)\|f\|_{H^1(\Omega_0)}.
  \end{aligned}
 \end{equation*}
The estimation of the $H^1-$norm is also obtained in the same way:
\begin{equation*}
 \|u_{N,h}-u\|_{H^1(\Omega_0)}\leq C\left(h+N^{-N_0+1/2}\right)\|f\|_{H^1(\Omega_0)}.
\end{equation*}
The proof is finished.
 
\end{proof}

\subsection{PM-method}\label{sec:pm}

In this subsection, we introduce another numerical method based on \eqref{eq:LAP_sep}. The integration part in \eqref{eq:LAP_sep} can be approximated in the same way as the CCI method, then we only need to deal with the first term. 

The first step is to find out explicit values of $\alpha_j^\pm$ and corresponding eigenfunctions $\psi^\pm_{j,\ell}(\alpha_j^\pm,\cdot)$. From the variational formulation \eqref{eq:quasi_periodic_var}
and by replacing $\log(z)$ by $\i\alpha$ in \eqref{eq:quasi_periodic_var}, the problem is to find $\alpha\in (-\pi,\pi]$ such that there is a non-trival $v\in H^1_\p(\Omega_0)$ solving
\begin{equation*}%\label{eq:quasi_periodic_var_alpha}
%\begin{aligned}
\int_{\Omega_0}\left[\grad v\cdot\grad\overline{\phi}+\i\alpha\left(v\frac{\partial\overline{\phi}}{\partial x_1}-\frac{\partial v}{\partial x_1}\overline{\phi}\right)-(k^2 q-\alpha^2)v\overline{\phi}\right]\d x
=0,
%\end{aligned}
\end{equation*}
for any $\phi\in H^1_\p(\Omega_0)$. 
 From Theorem 2.4, \cite{Kirsc2017a}, it is written as a generalized eigenvalue problem:
 \begin{equation}\label{eq:gep}
 \left(
 \begin{matrix}
 I-K_0 & 0 \\ 0 & I
 \end{matrix}
 \right)
  \left(
 \begin{matrix}
 u_1 \\ u_2
 \end{matrix}
 \right)=-\alpha \left(
 \begin{matrix}
 B & C^{1/2} \\ -C^{1/2} & 0
 \end{matrix}
 \right)
 \left(
 \begin{matrix}
 u_1 \\ u_2
 \end{matrix}
 \right).
 \end{equation}
By solving this problem, we can obtain the values of $\alpha_j^\pm$ and the corresponding eigenfunctions 
$\psi^\pm_{j,\ell}(\alpha_j^\pm,\cdot)$ at the same time.  Then the values of $\left<f,\psi^\pm_{j,\ell}(\alpha_j^\pm,\cdot)\right>$
are easily calculated by numerical integral on triangular meshes.

Now we have to evaluate $(\mu_{j,\ell}^\pm)'(\alpha_j^+)$. First we draw the dispersion diagram in the neighbourhood of each $\alpha_j^+$, then the derivative can be computed by the symmetric difference quotient, i.e., for a sufficiently small $\delta_0>0$,
\[
(\mu_{j,\ell}^\pm)'(\alpha_j^+)\approx \frac{(\mu_{j,\ell}^\pm)(\alpha_j^+ +\delta_0)-(\mu_{j,\ell}^\pm)(\alpha_j^+ -\delta_0)}{2\delta_0}.
\]
The errors brought by solutions of generalized eigenvalue problems, numerical integration and numerical differentiation are sufficiently small, thus are omitted. Thus the main error still comes from the first term, and can be estimated by Theorem \ref{th:err}.

\section{Half-guide problems}
\label{sec:half-guide}

The numerical method introduced in this paper is also extended to  half-guide scattering problems. Let the half waveguide be defined by $\Omega_+:=\bigcup_{n=1}^\infty\Omega_n$ (see Figure \ref{half-waveguide}). Then we are looking for the LAP solution $u_+\in H_{loc}^1(\Omega_+)$ such that it satisfies
\begin{equation}\label{eq:half-guide}
\Delta u_++k^2 qu_+=0\quad\text{ in }\Omega_+;\quad u_+=\phi\quad\text{ on }\Gamma_1.
\end{equation}
From \cite{Hoang2011}, for any $\phi\in H^{1/2}(\Gamma_1)$ the LAP solution exists if Assumption \ref{asp1} holds.
In this section, we introduce numerical methods to approximate those LAP solutions with Assumption \ref{asp1} and \ref{asp2}. 
%When $k^2$ is replaced by $k^2+\i\epsilon$ for an $\epsilon>0$, then from the Lax-Milgram theorem again, there is a unique solution $u_+(\epsilon)\in H^1(\Omega_+)$ that decays exponentially at the infinity. Then we are looking for the ``physically meaningful'' solution $u_+$ such that it is the limit of $u_+(\epsilon)$ in $H^1_{loc}(\Omega_+)$ when $\epsilon\rightarrow 0$. %In the following, we also let $\Omega_-:=\bigcup_{n=-\infty}^0 \Omega_n$ for simplicity. Then $\Omega=\Omega_-\bigcup\Omega_+$.

\begin{figure}[ht]
\centering
\includegraphics[width=8cm]{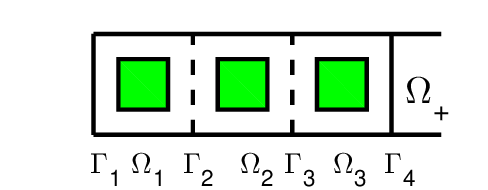}
\caption{Periodic waveguide.}
\label{half-waveguide}
\end{figure}

\subsection{Approximation of LAP solutions}

First, we define the following space:
\[
\mathcal{U}:=\left\{u(f)\big|_{\Gamma_1}:\,u(f)\in H^1_{loc}(\Omega) \text{ is the LAP solution of \eqref{eq:wg1}-\eqref{eq:wg2} with }f\in L^2(\Omega)\right\}.
\]
From the analysis in \cite{Zhang2019a}, $\overline{\mathcal{U}}=H^{1/2}(\Gamma_1)$. For any fixed $\phi\in H^{1/2}(\Gamma_1)$, given any sufficiently small $\epsilon_0>0$, there is an $\Omega_0$-supported function $f$ such that $\|u(f)\big|_{\Gamma_1}-\phi\|_{H^{1/2}(\Gamma_1)}\leq\epsilon_0$. Then $u_+$ is approximated by:
\begin{equation}\label{eq:LAP_half}
 \widetilde{u}_+(x_1+n,x_2)=\frac{1}{2\pi\i}\oint_{\mathcal{C}_0}w(z,x)z^{n-1}\d z,\quad x\in\Omega_0,\,n\in\N
\end{equation}
where $w(z,\cdot)$ solves \eqref{eq:quasi1}-\eqref{eq:quasi2} with the right hand side $f$, and $\mathcal{C}_0$ is defined by Definition \ref{def:C_0}. 
%Note that although the LAP solution $u_+$ is unique, the choice of $f$ is \high{not}. This representation also implies that the LAP solution in $\Omega_+$ can be extended to an LAP solution to a full guide problem, but the extension is not unique. 

To solve this problem with the method based on the approximation \eqref{eq:LAP_half}, a two-step method is developed. The first step use the density of $\mathcal{U}$ in $H^{1/2}(\Gamma_1)$ to determine a source term $f$. The second step use the source term to compute $u_+$, with the same method as the full-guide problem. As the second step is exactly the same as the full-guide problem, we only discuss the first step in this section.

Let $A$ be the operator from $L^2(\Omega_0)$ to $H^{1/2}(\Gamma_1)$ defined by
\begin{equation}
f\mapsto \left.\left(\frac{1}{2\pi\i}\oint_{\mathcal{C}_0}w(z,x)z^{-1}\d z\right)\right|_{\Gamma_1}.
\end{equation}
From Theorem \ref{th:LAP}, $u(f)\in H^2(\Omega_0)$. Thus 
$A$ is a compact operator with a dense range. We are looking for an $f$ such that
\begin{equation*}
 Af=\phi\quad\text{ on }\Gamma_1.
\end{equation*}
As the equation is severely ill-posed, we apply the Tikhonov regularization method to find the ``best solution'' of this equation.

\subsection{Tikhonov regularization}

 Given a sufficiently small regularization parameter $\gamma>0$, we are looking for an $f_\gamma\in L^2(\Omega_0)$ such that
\begin{equation*}
 (A^*A+\gamma I)f_\gamma=A^*\phi.
\end{equation*}
Suppose $\{\mu_n,u_n,v_n\}$ is the singular system of the operator $A$, and $\mu_1\geq\mu_2\geq\cdots\geq\mu_n\geq\cdots>0$. From the definition,
\begin{equation*}
 A u_n=\mu_n v_n,\quad A^* v_n=\mu_n u_n.
\end{equation*}
%As $A$ is non-injective, $\high{ker}(A)\neq\emptyset$. As $A$ is a surjection, $\{v_n\}_{n\in \N}$ is a basis of $H^{1/2}(\Gamma_1)$. 
For any $\phi\in H^{1/2}(\Gamma_1)$, there is a series $\{c_n:\,n\in \N\}\subset\C$ such that
\begin{equation*}
 \phi=\sum_{n\in \N}c_n v_n,\quad\text{ where     }\|\phi\|_{H^1(\Omega_0)}^2=\sum_{n\in\N}|c_n|^2.
\end{equation*}
% If there is an $f\in H^1(\Omega_0)$ such that $Af=\phi$, 
%\begin{equation*}
% f=\sum_{n\in \N}\frac{c_n}{\mu_n}u_n+\sum_{n\in\N}b_n \xi_n,
%\end{equation*}
%where $\{\xi_n:\,n\in\N\}$ is the basis of the space $\mathcal{N}(A)$, $b_n$ could be any complex number such that $\sum_{n\notin I}|b_n|^2$ converges. 
From direct calculation, 
\begin{equation*}
 f_\gamma=\sum_{n\in\N}\frac{\mu_n}{\mu_n^2+\gamma}c_n u_n.
\end{equation*}

Then we estimate the error between $A f_\gamma$ and $\phi$:
\begin{equation*}
 \|A f_\gamma-\phi\|^2=\left\|\sum_{n\in\N}\frac{\mu_n^2}{\mu_n^2+\gamma}c_n v_n-\sum_{n\in\N}{c_n} v_n\right\|^2=\sum_{n\in\N}\frac{\gamma^2 |c_n|^2}{(\mu_n^2+\gamma)^2}.
\end{equation*}

As $\sum_{n\in\N}|c_n|^2$ converges, given any $\delta>0$, there is an integer $L>0$ such that
\begin{equation*}
 \sum_{n=L+1}^\infty {|c_n|^2}<\delta/2
 \quad\implies\quad
%\end{equation*}
%thus
%\begin{equation*}
 \sum_{n=L+1}^\infty\frac{\gamma^2 |c_n|^2}{(\mu_n^2+\gamma)^2}<\sum_{n=L+1}^\infty {|c_n|^2}<\delta/2.
\end{equation*}
For $n=0,1,\dots,L$, let $\displaystyle\gamma<\min_{n=0,1,\dots,L}\sqrt{\frac{\delta\mu_n^4}{(2L+2)|c_n|^2}}$, then
\begin{equation*}
 \frac{\gamma^2|c_n|^2}{(\mu_n^2+\gamma)^2}\leq \frac{\gamma^2|c_n|^2}{\mu_n^4}<\delta/(2L+2).
 \end{equation*}
Thus
\begin{equation*}
 \|A f_\gamma-\phi\|^2\leq \delta/2+(L+1)\delta/(2L+2)=\delta.
\end{equation*}
This implies that when $\gamma\rightarrow 0$, $A f_\gamma\rightarrow \phi$ in $H^{1/2}(\Gamma_1)$. Thus the corresponding solution also converges as $\gamma\rightarrow 0$. 

\subsection{Numerical method}

Now we conclude the numerical scheme for half-guide scattering problems as follows.

\begin{enumerate}
 \item Choose a family of basis functions for the space $H^1(\Omega_0)$ and denote it by $\{\phi_\ell\}_{\ell=1}^\infty$. Fix a large enough integer $M_0>0$ and construct the matrix
\begin{equation*}
 \Phi:=\left(A\phi_1,A\phi_2,\dots,A\phi_{M_0}\right).
\end{equation*}
\item Fix a small regularization parameter $\gamma>0$, compute the vector
\begin{equation*}
 c(\gamma)=\left[\Phi^*\Phi+\gamma I\right]^{-1}\left(\Phi^*\phi\right).
\end{equation*}
\item Approximate the function $f$ by $f_\gamma^{M_0}$ with coefficients $c(\gamma)$:
\begin{equation*}
 f_\gamma^{M_0}(x)=\sum_{\ell=1}^{M_0}c_\ell(\gamma)\phi_\ell(x).
\end{equation*}
\item Solve the equation \high{\eqref{eq:wg_z1}-\eqref{eq:wg_z2}} with the source term $f_\gamma^{M_0}$. The LAP solution $u$ is then obtained by either the CCI method or the PM method. 
\end{enumerate}

From the convergence of the numerical method to approximate $f$ and solve \eqref{eq:LAP_half}, the full numerical scheme converges as $M_0\rightarrow \infty$, $\gamma\rightarrow 0$, $h\rightarrow0$ and $N\rightarrow\infty$.

\section{Special wavenumbers}
\label{sec:special}

In previous sections are discussed based on Assumption \ref{asp2}, i.e., $S_+^0\cap S_-^0=\emptyset$. As it is not a necessary condition for the limiting absorption principle, we consider the case  $S_+^0\cap S_-^0\neq\emptyset$ in this section. Recall that 
\begin{align*}
& S_+^0=\left\{z_{j,\ell}^+:\,j=1,\dots,Q;\,\ell=1,\dots,L_j\right\};\\
&S_-^0=\left\{z_{j,\ell}^-:\,j=1,\dots,Q;\,\ell=1,\dots,L_j\right\};
\end{align*}
where
\[
z_{j,1}^+=\cdots=z_{j,L_j}^+=z_j^+,\quad z_{j,1}^-=\cdots=z_{j,L_j}^-=z_j^-,\text{ for all }j=1,\dots, Q.
\]
Recall \eqref{eq:LAP_sep}, $u$ is written as
\[
 u(x)=\frac{1}{2\pi\i}\oint_{|z|=\exp(-\tau)} w(z,x)z^{n-1}\d z+\sum_{j=1}^Q\sum_{\ell=1}^{L_j} u_{j,\ell}^+(x),\quad x\in\Omega_0,
\]
where
\[
 u_{j,\ell}^+=\frac{\i\left<f,\psi_{j,\ell}^+(\alpha_j^+,\cdot)\right>}{(\mu_{j,\ell}^+)'(\alpha_j^+)}\psi_{j,\ell}^+(\alpha_j^+,\cdot).
\]
Note that $z_j^\pm=\exp(\i\alpha_j^\pm)$ for $j=1,2,\dots,Q$. Reorder the points $z_j^\pm$ and $\alpha_j^\pm$ such that $z_j^+=z_j^-$ for $j=1,2,\dots,Q'$ where $1\leq Q'\leq Q$, then $\alpha_j^+=\alpha_j^-$ for these $j$'s. Let $z_j:=z_j^\pm$ and $\alpha_j=\alpha_j^\pm$ for $j=1,2,\dots,Q'$. 

%\begin{remark}
% In this section, we assume that the $Q$ elements in $S_+^0$ ($S_-^0$) correspond to $Q$ different analytic functions $\mu_j^\pm$. When there are some elements correspond to multiple $\mu_j^\pm$'s, the method is easily extended thus we omit it here.
%\end{remark}

As $\left(\mu_{j,\ell}^+\right)'(\alpha_j^+)>0$ ($\ell=1,\dots,L_j$), its inverse function exists in a sufficiently small neighbourhood of $k^2_0$. Let the inverse function be denoted by $\eta_{j,\ell}^+$, then $\eta_{j,\ell}^+(k_0^2)=\alpha_j^+$. As $\mu_{j,\ell}^+$ depends analytically on $\alpha$, $\eta_{j,\ell}^+$ is an analytic function in a sufficiently small neighbourhood of $k^2_0$. Moreover, $\left(\eta_{j,\ell}^+\right)'(k^2_0)=\left(\left(\mu_{j,\ell}^+\right)'(\alpha_j^+)\right)^{-1}>0$, thus $\eta_{j,\ell}^+$ is a strictly increasing function near $k_0^2$. Similarly, the inverse function of $\mu_{j,\ell}^-$, denoted by $\eta_{j,\ell}^-$, also exists. The function $\eta_{j,\ell}^-$ is analytic in a sufficiently small neighbourhood of $k^2_0$, and is strictly decreasing.  For any $j=1,2,\dots,Q$, there is a sufficiently small neighbourhood of $k^2_0$, denoted by $K$, such that $\eta_{j,\ell}^+$ ($\eta_{j,\ell}^-$) exists and is analytic in $K$. Moreover, $\eta_{j,\ell}^+$ is strictly increasing and $\eta_{j,\ell}^-$ is strictly decreasing in $K$ for any $j=1,2,\dots,Q$. 

Let $K_+:=K\cap(k_0^2,\infty)$ and $K_-:=K\cap(0,k_0^2)$. When $k^2\in K_-$, for any $j=1,2,\dots,Q'$ and $\ell=1,\dots,L_j$,
\[\eta_{j,\ell}^+(k^2)<\eta_{j,\ell}^+(k_0^2)=\alpha_j,\quad \eta_{j,\ell}^-(k^2)>\eta_{j,\ell}^-(k_0^2)=\alpha_j.\]
Similarly, when $k^2\in K_+$, for any $j=1,2,\dots,Q'$ and $\ell=1,\dots,L_j$,
\[\eta_{j,\ell}^+(k^2)>\eta_{j,\ell}^+(k_0^2)=\alpha_j,\quad \eta_{j,\ell}^-(k^2)<\eta_{j,\ell}^-(k_0^2)=\alpha_j.\]
This implies that when $k^2\neq k_0^2$, the  unit Floquet multipliers $\mathbb{UF}(k^2)$ are separated. 
 In this case, $k$ is the wavenumber such that both Assumption \ref{asp1} and \ref{asp2} are satisfied, and the solution is computed 
by the algorithm introduced in previous sections. Let $u_{k^2}$ be the corresponding LAP solution, then from \eqref{eq:LAP_sep},
\[
 u_{k^2}(x)=\frac{1}{2\pi\i}\oint_{|z|=\exp(-\tau)} w_{k^2}(z,x)z^{n-1}\d z+\sum_{j=1}^Q\sum_{\ell=1}^{L_j} u_{j,\ell,k^2}^+(x),
\]
where
\[
 u_{j,\ell,k^2}^+=\frac{\i\left<f,\psi_{j,\ell}^+(\eta_{j,\ell}^+(k^2),\cdot)\right>}{(\mu_{j,\ell}^+)'(\eta_{j,\ell}^+(k^2))}\psi_{j,\ell}^+(\eta_{j,\ell}^+(k^2),\cdot).
\] 
It is clear that $u_{j,\ell,k^2}^+$ depends analytically on $k^2\in K$, where $K$ is a small neighbourhood of $k_0^2$. Let $k\rightarrow k_0$, the above equation still  holds.

 \begin{figure}[ht]
\centering
\begin{tabular}{c  c}
\includegraphics[width=0.5\textwidth,height=0.45\textwidth]{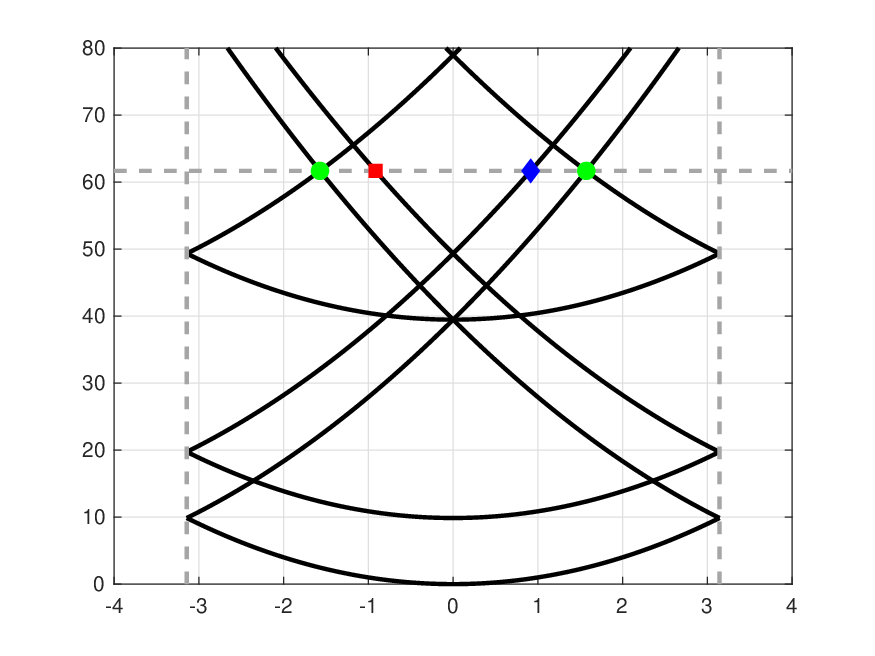} 
& \includegraphics[width=0.4\textwidth,height=0.45\textwidth]{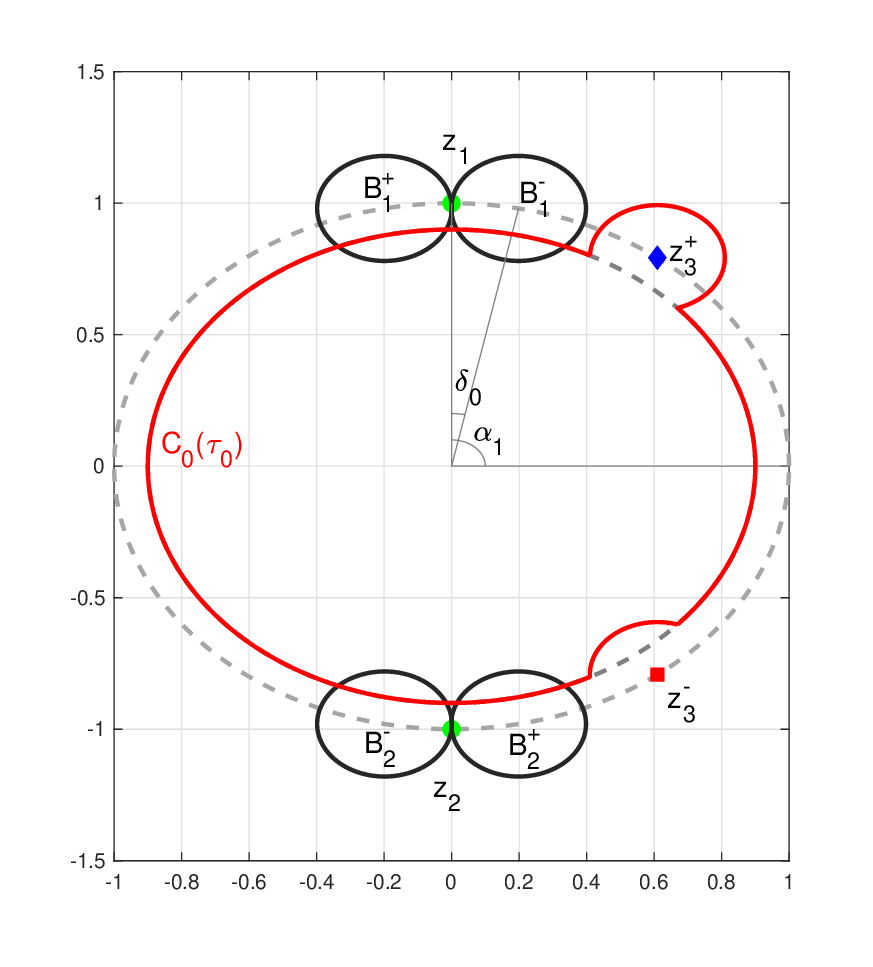}\\[-0cm]
\end{tabular}
\caption{Example for $n=1$ and $k^2_0\approx 61.685$. There are four unit Floquet multipliers, \high{corresponding} to $\alpha\approx \pm 1.5708,\,\pm 0.9151$. From the definition of $S_0^\pm$, $P'=2$ and $P=3$, the green circles are $z_1\approx \exp(1.5708\i)$ and $z_2\approx \exp(-1.5708\i)$, the red square is $z_3^-\approx \exp(- 0.9151\i)$ and the blue diamond is $z_3^+\approx \exp( 0.9151\i)$. The red curve is $\mathcal{C}_0(\tau_0)$, the balls with black boundaries are $B_j^\pm$ where $j=1,2$.}
\label{fig:curve_special}
\end{figure}

Let $\tau_0>0$ be sufficiently small such that the set $RS$ lies in the ball $B(0,\exp(-\tau_0))$. From Lemma \ref{th:s_pm_dist}, $\tau_0$ exists. Then we define the following domain:
\[B_{\tau_0,\delta}=B(0,\exp(-\tau_0))\bigcup\left[\bigcup_{n=Q'+1}^Q B(z_j^+,\delta)\right]\setminus \left[\bigcup_{n=Q'+1}^Q \overline{B(z_j^-,\delta)}\right],\]
where $\delta>0$ satisfies all the conditions in Definition \ref{def:C_0}. Let $\mathcal{C}_0(\tau_0)$ be the boundary of $B_{\tau_0,\delta}$ (see Figure \ref{fig:curve_special}). From the definition of $\mathcal{C}_0(\tau_0)$ and Theorem \ref{th:LAP}, the solution with respect to $k^2\in K\setminus\{k_0^2\}$ has the following representation:
\[u_{k^2}(x_1,x_2)=\frac{1}{2\pi\i}\int_{\mathcal{C}_0(\tau_0)}w_{k^2}(z,x)z^{-1}\d z+\sum_{j=1}^{Q'} \sum_{\ell=1}^{L_j}u_{j,\ell,k^2}^+(x).
\]
where
\[
u_{j,k^2}^+(x)=\sum_{\ell=1}^{L_j}u_{j,\ell,k^2}^+(x).
\]
When $z\in\mathcal{C}_0(\tau_0)$, the integrand $w_{k^2}(z,x)$ depends analytically on $k^2\in K$. As the first term is well defined when $k^2\rightarrow k_0^2$, we can easily prove that
\[\frac{1}{2\pi\i}\int_{\mathcal{C}_0(\tau_0)}w_{k^2}(z,x)z^{n-1}\d z\rightarrow \frac{1}{2\pi\i}\int_{\mathcal{C}_0(\tau_0)}w_{k^2_0}(z,x)z^{n-1}\d z\Big(:=u_0(x_1+n,x_2)\Big)\]
as $k^2\rightarrow k_0^2$.  
We only need to consider the numerical approximation of $u_{j,k_0^2}^+$ where $j=1,2,\dots,Q'$.

Let $\delta_0>0$ be a small number, \high{and define} the following two balls (see Figure \ref{fig:curve_special}) for any $j=1,2,\dots,Q'$:
\begin{equation}
\label{eq:ball}
\begin{aligned}
& B_j^-:=B\left(\exp\left[\i(\alpha_j-\delta_0)\right],2\sin\left(\frac{\delta_0}{2}\right)\right);\\
& B_j^+:=B\left(\exp\left[\i(\alpha_j+\delta_0)\right],2\sin\left(\frac{\delta_0}{2}\right)\right).
\end{aligned}
\end{equation}
Then the point $z_j=\exp(\alpha_j)\in\partial \,B_j^+\cap\partial\, B_j^-$. Furthermore, we suppose that $\delta_0$ is sufficiently small such that $B_j^\pm\cap \mathbb{F}=\emptyset$. Moreover, the following two \high{inclusions} are easily obtained when the neighbourhood $K$ is sufficiently small:
\begin{eqnarray*}
 && \left\{\exp\left(\i\eta_j^+(k^2)\right):\,k^2\in K_-\right\}\subset B_j^-;\quad \left\{\exp\left(\i\eta_j^-(k^2)\right):\,k^2\in K_-\right\}\subset B_j^+;\\
 && \left\{\exp\left(\i\eta_j^-(k^2)\right):\,k^2\in K_+\right\}\subset B_j^-;\quad \left\{\exp\left(\i\eta_j^+(k^2)\right):\,k^2\in K_+\right\}\subset B_j^+.
\end{eqnarray*}
Then for $j=1,2,\dots,Q'$, $u_{j,k^2}^+$ has the following integral representation:
\begin{equation}
\label{eq:u_j}
 u_{j,k^2}^+(x_1,x_2):=\begin{cases}
\frac{1}{2\pi\i}\int_{\partial B_j^-}w_{k^2}(z,x)z^{-1}\d z\quad\text{ when }k^2<k_0^2;\\
\frac{1}{2\pi\i}\int_{\partial B_j^+}w_{k^2}(z,x)z^{-1}\d z\quad\text{ when }k^2>k_0^2.
            \end{cases}
\end{equation}
As this function depends analytically on $k$, we can still apply the LAP to obtain the ``physically meaningful'' solution. The results are concluded in the following theorem.

\begin{theorem}
 \label{th:LAP_special}
 Suppose Assumption \ref{asp1} is satisfied, and $k_0$ is \high{a} wavenumber such that $S_0^+\cap S_0^-\neq\emptyset$. Let $\mathcal{C}_0(\tau_0)$ be defined as above, then 
 \[\lim_{k\rightarrow k_0}u_{k^2}(x_1,x_2)=u_{k_0^2}(x_1,x_2),\quad x\in\Omega_0.\]
 Moreover, $u_{k_0^2}$ has the \high{representation }
 \begin{equation}\label{eq:LAP_special}
 %\begin{aligned}
  u_{k_0^2}(x_1,x_2)=\frac{1}{2\pi\i}\int_{\mathcal{C}_0(\tau_0)}w_{k^2_0}(z,x)z^{-1}\d z+\sum_{j=1}^{Q'}\lim_{k\rightarrow k_0} u_{j,k^2}^+,
  %+\sum_{j=1}^{Q'} \frac{\i \left<f,\phi_j(\eta_j^+(k_0^2),\cdot)\right>}{\left(\mu_j^+\right)'(\eta_j^+(k_0^2))}\phi_j(\eta_j^+(k_0^2),\cdot).
% \end{aligned}
 \end{equation}
 where $u_{j,k^2}^+$ is defined by \eqref{eq:u_j}.
\end{theorem}

\subsection{CCI method}

As was explained, the first term in \eqref{eq:LAP_special} can be evaluated directly by the numerical method introduced in \eqref{eq:appx_uN}. For the second term, an interpolation technique with respect to real valued $k^2$ is introduced to approximate the exact value. This is motivated by  \cite{Ehrhardt2009a}, where an extrapolation technique with respect to $\epsilon$ was developed.  We conclude the algorithm as follows. 
\begin{enumerate}
\item Compute $u_0$ from the method introduced in \eqref{eq:app_Nh}, denoted by $u_{0,N,h}$, where $N$ and $h$ are defined in the same way as in \eqref{eq:app_Nh}.
 \item Fix $M$ ($M\geq 2$) different wavenumbers $k_m^2\in K\setminus\{k_0^2\}$, $m=1,2,\dots,M$, evaluate the value $u_{k_m^2,j}^+$ for $j=1,2,\dots,Q'$. The values are approximated by \high{\eqref{eq:u_j}} with $N$ points, and denoted by $u_{j,k_m^2,N,h}^+$.
 \item With these $M$ points, we approximate the value at $k_0^2$ by interpolation from $u_{j,k_m^2,N,h}^+$ where $m=1,2,\dots,M$.
 %\begin{equation}
 % \label{eq:interpolation}
 % \begin{aligned}
 % &{u}_{j,k_0^2,N,M,h}^+\\
%  =&\sum_{m=1}^{M}\frac{\left(k_0^2-k_1^2\right)\cdots\left(k_0^2-k_{m-1}^2\right)\left(k_0^2-k_{m+1}^2\right)\cdots\left(k_0^2-k_{M}^2\right)}{\left(k_m^2-k_1^2\right)\cdots\left(k_m^2-k_{m-1}^2\right)\left(k_m^2-k_{m+1}^2\right)\cdots\left(k_m^2-k_{M}^2\right)} u_{j,k_m^2,N,h}^+.
 % \end{aligned}
 %\end{equation}
\item $u_{k_0^2,N,h}$ is then approximated by
\[ {u}_{k^2_0,N,M,h}(x_1+n,x_2)={u}_{0,N,h}+\sum_{j=1}^{Q'} {u}_{j,k_0^2,N,M,h}^+.\]
\end{enumerate}

The following error estimations can be obtained in the same way \high{as in} Theorem \ref{th:err}:
\[
\begin{aligned}
\|u_{0,N,h}-u_0\|_{H^\ell(\Omega_0)},\,&\left\|u_{j,k^2,N,h}^+-u_{j,k^2}^+\right\|_{H^\ell(\Omega_0)}\\&\leq C\left(N^{-N_0+1/2}+h^{2-\ell}\right)\|f\|_{L^2(\Omega_0)},\quad \ell=0,1.\end{aligned}\]
Note that $C$ does not depend on $N$ and $h$, but it depends on $k^2$ and may blow up when $k^2$ is close to $k_0^2$. To obtain the error between $u_{k^2_0,N,h,M}$ and $u_{k_0^2}$, we only need to estimate the error from the interpolation. As $u_{j,k^2}^+$ depends analytically on $k^2\in K$, there is a point $K^*\in K$ such that 
\[u_{j,k^2}^+=\sum_{\ell=0}^\infty u_\ell^j (k^2-K^*)^\ell,\]
and there is a $C_0>0$ such that $\left\|u_\ell^j\right\|_{H^1(\Omega_0)}\leq C_0^\ell$ uniformly for $\ell=0,1,2,\dots$. 
When we approximate $u_{j,k^2}^+$ by interpolation,  from standard error estimation of interpolation,
\[\left\|{u}_{j,k_0^2,N,M,h}^+-{u}_{j,k_0^2}^+\right\|_{H^1(\Omega_0)}\leq C_0^{M} |K|^{M},\]
where $|K|=\max_{x\neq y\in K}|x-y|$.  Thus we can finally obtain the error estimation of the algorithm:
\begin{equation}
 \label{eq:err_special}
 \left\|{u}_{k^2_0,N,M,h}-{u}_{k^2_0}\right\|_{H^\ell(\Omega_0)}\leq C\left(N^{-N_0+1/2}+h^{2-\ell}+C_0^M|K|^{M}\right)\|f\|_{L^2(\Omega_0)},
\end{equation}
where $\ell=0,1$. 
However, in numerical results, the convergence rate of the third term is difficult to analyze. On one hand, to make sure that the Taylor expansion converges, we require that $|K|$ is sufficiently small; on the other hand, the error $\left\|u_{j,k^2,N,\high{M},h}^+-u_{j,k^2}^+\right\|_{H^\ell(\Omega_0)}$ becomes larger when $k^2\rightarrow k_0^2$ due to the pole at $k_0^2$, as $C=C(|K|)$ may blow up. Moreover, $C_0$ can be a large number and it is impossible to be evaluated. 

\begin{remark}
 Compared to the method introduced in \cite{Ehrhardt2009a}, our method has some advantages. First, as the function $u_{k^2}^j$ depends analytically on $k^2\in K$, an interpolation technique can be applied to the approximation, which is better than extrapolation methods. Second, the analytical dependence is proved, but the Taylor's expansion with respect to $\epsilon$ is an assumption without proof. However, \high{both methods} have a  disadvantage in common, that is when $k^2\rightarrow k_0^2$ or $\epsilon\rightarrow 0^+$, the error brought by numerical schemes becomes larger, so the error estimation can not be explicitly described.
\end{remark}

\subsection{PM method}

As was shown, the representation \eqref{eq:LAP_sep} still holds when $S_+^0\cap S_-^0\neq\emptyset$. We extend the PM method to this case in this subsection. Similar to  Section \ref{sec:pm}, we can still find out all the $\alpha_j^\pm$ by solving \eqref{eq:gep}. However, when $S_+^0\cap S_-^0\neq\emptyset$, there may be some $\alpha_j^\pm$ such that there are corresponding eigenfunctions propagating to different directions. To this end, we introduce  an energy criteria to decide the direct of a
propagating mode (see \cite{Ehrhardt2009a}). First, we introduce an energy flux:
\[
\mathcal{E}(\phi,\phi)=4k\Im\int_{\Gamma_j}\frac{\partial\phi}{\partial x_1}\overline{\phi}\d s.
\]
When $\phi$ is propagating to the right, $\mathcal{E}(\phi; \phi) > 0$; while when it is propagating
to the left, $\mathcal{E}(\phi; \phi) < 0$. Suppose we have already found out all the elements in $P_+\bigcup P_-$, i.e., $\alpha_1,\dots,\alpha_Q$. For any $j=1,\dots,Q$, there are $L_j$ corresponding eigenfunctions $\psi_{j,\ell}(\alpha_j^+,\cdot)$. Then \eqref{eq:LAP_sep} is rewritten as:
\begin{equation*}
u(x_1+n,x_2)=\begin{cases}
\displaystyle
\begin{aligned}&  \i\left[\sum_{j=1}^Q\sum_{\ell=1}^{L_j}\mathds{1}_{j,\ell}
\frac{\exp(\i n\alpha_j)\left<f,\psi_{j,\ell}(\alpha_j,\cdot)\right>}{(\mu_{j,\ell})'(\alpha_j)}\psi_{j,\ell}(\alpha_j,\cdot)\right]      \\&\,+
\frac{1}{2\pi\i}\oint_{|z|=\exp(-\tau)} w(z,x)z^{n-1}\d z,\qquad   \qquad\qquad\qquad n\geq 0;
\end{aligned}\\
\\
\displaystyle
\begin{aligned}&  \i\left[\sum_{j=1}^Q\sum_{\ell=1}^{L_j}\left(1-\mathds{1}_{j,\ell}\right)
\frac{\exp(\i n\alpha_j)\left<f,\psi_{j,\ell}(\alpha_j,\cdot)\right>}{(\mu_{j,\ell})'(\alpha_j)}\psi_{j,\ell}(\alpha_j,\cdot)\right]   \\&\,+\frac{1}{2\pi\i}\oint_{|z|=\exp(\tau)} w(z,x)z^{n-1}\d z,\qquad \qquad\qquad\qquad \quad  n< 0;\end{aligned}
\end{cases}
\end{equation*}
where $\mathds{1}_{j,\ell}$ is an indicator function defined by:
\[
\mathds{1}_{j,\ell}=\begin{cases}
1,\text{ if } \mathcal{E}(\psi_{j,\ell},\psi_{j,\ell})>0;\\
0, \text{ if } \mathcal{E}(\psi_{j,\ell},\psi_{j,\ell})<0.
\end{cases}
\]

\section{Numerical results}
\label{sec:numer-res}

In this section, we present some numerical examples to show the efficiency of our numerical methods, i.e., CCI-method and PM-method, for both full- and half-guide problems. For all the examples, the function $q$ is chosen as follows:
\begin{equation*}
 q(x)=\begin{cases}
       1,\quad |x-a_0|>0.3;\\
       9,\quad 0.1<|x-a_0|<0.3;\\
       1+8\zeta(|x-a_0|),\quad\text{otherwise.}
      \end{cases}
\end{equation*}
The point $a_0=(0,0.5)$, and $\zeta(t)$ is a $C^8$-continuous function defined \high{by}
\begin{equation*}
 \zeta(t)=\begin{cases}
           1,\quad t\leq a;\\
           0,\quad t\geq b;\\
           1-\left[\int_{\tau=a}^b (\tau-a)^4(\tau-b)^4\d \tau\right]^{-1}\left[\int_{\tau=a}^t (\tau-a)^4(\tau-b)^4\d \tau\right],\, a<t<b
          \end{cases}
\end{equation*}
with $a=0.1$, $b=0.3$.

\begin{figure}[ht]
\centering
\includegraphics[width=12cm]{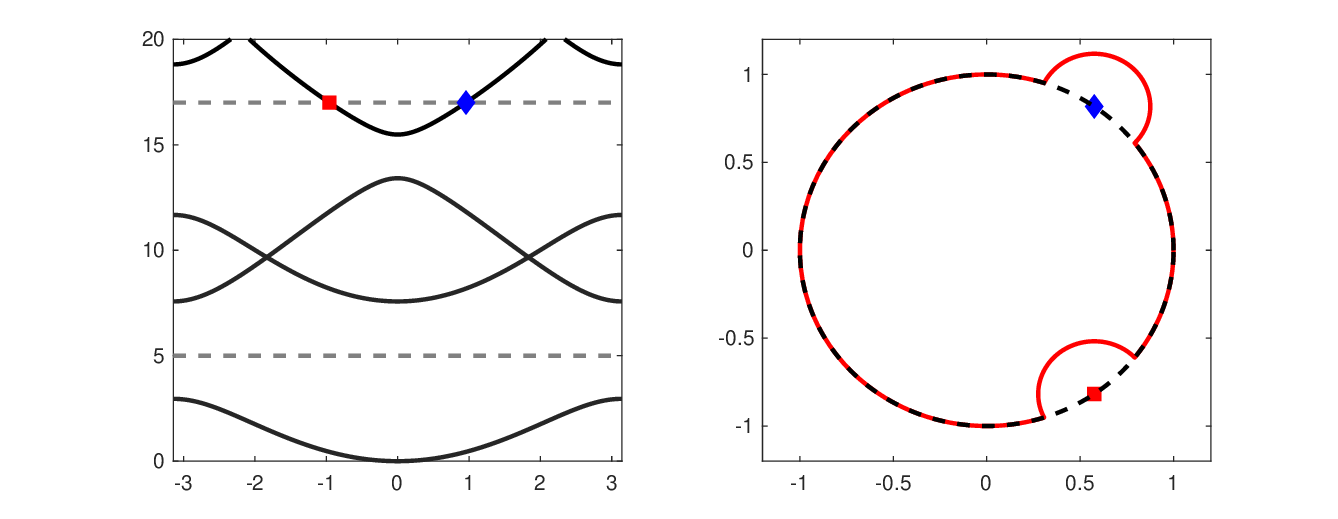}
\caption{Left: dispersion diagram; right: $z$-space.}
\label{fig:disp}
\end{figure}

First, we draw the dispersion diagram and the corresponding $z$-space, see Figure \ref{fig:disp}. From the dispersion diagram, we find out two stop bands, i.e., $(2.956\pm0.01,7.574\pm0.01)$ and $(13.41\pm0.01,15.49\pm0.01)$. When $k^2=5$, it lies in the first stop band, thus there is no eigenvalue on $\UC$ in the $z$-space, i.e., $Q=0$. In this case, $u$ is represented by \eqref{eq:u_inv} with the integral on the unit circle $\UC$. However, when $k^2=17$, there are two points lying on the dispersion curve with $\mu_n(\alpha)=17$. This implies that $Q=1$ and $\alpha_1^+=0.9576(\pm 0.01)$ (blue diamond) and $\alpha_1^-=-0.9576(\pm 0.01)$ (red square). Thus we design the integral curve in \eqref{eq:u_LAP} as the red curve on the right. Moreover, we also check the condition number of the matrix obtained from the finite element discretization of \eqref{eq:quasi_periodic_var}, to find out a rough location of poles of the function $w(z,\cdot)$ (see Figure \ref{fig:cn}).
The parameter $N_0$ is fixed \high{to be} $6$ for all the numerical examples in this section.

\begin{figure}[ht]
\centering
\includegraphics[width=6cm]{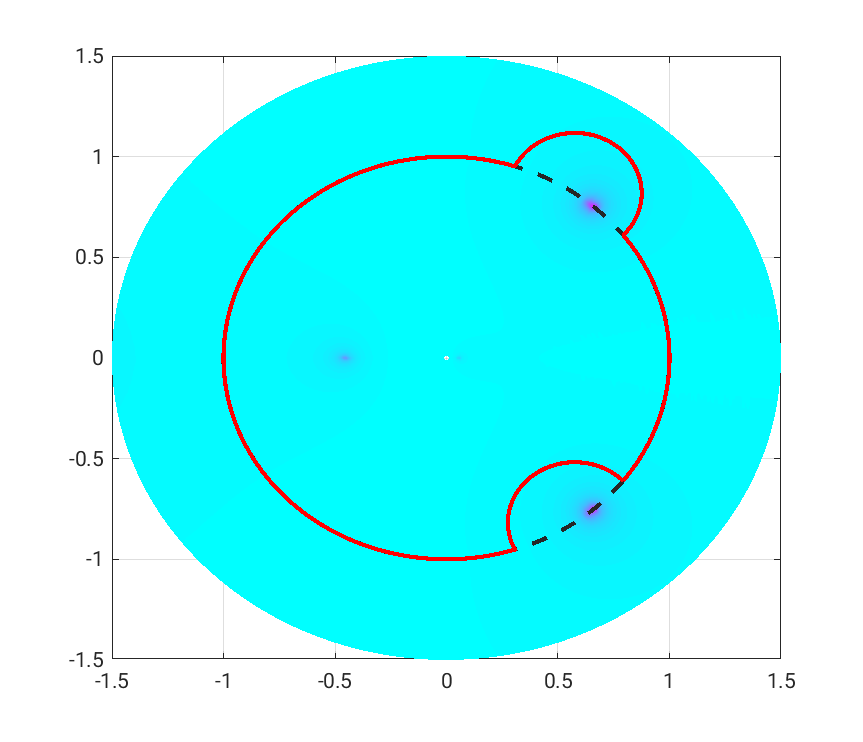}
\caption{Condition number on the complex \high{plane}.}
\label{fig:cn}
\end{figure}

\subsection{Full-guide problems}
\label{sec:full-guide-numer}

In the first part of this section, we show some numerical examples for scattering problems in full-waveguide, when Assumption \ref{asp1} and \ref{asp2} are both satisfied. The compactly supported source term $f$ is defined as follows:
\begin{equation*}
f(x)=\begin{cases}
      0,\quad |x-a_0|>0.3;\\
      3\cos(2\pi x_1)\sin(2\pi x_2),\quad 0.1<|x-a_0|<0.3;\\
      3\zeta(|x-a_0|)\cos(2\pi x_1)\sin(2\pi x_2),\quad\text{otherwise;}
     \end{cases}
\end{equation*}
where $a_0=(0,0.5)^\top$.

With these \high{data}, we calculate the value of $u$ \high{for } different parameters.  For the finite element method, we choose $h=0.0025, 0.005, 0.01,0.02$ and $N=8,16,32,64$ for $k^2=5$, $N=4,8,16,32,64$
for $k^2=17$. We also compute ``exact solutions'' from the finite element method introduced in \cite{Ehrhardt2009a,Ehrhardt2009} with $h=0.005$ and the Lagrangian element. First we show the relative errors with different $h$ and $N$ for both cases, defined by
\begin{equation*}
 err_{N,h}=\frac{\|u_{N,h}-u_{exa}\|_{L^2(\Omega_0)}}{\|u_{exa}\|_{L^2(\Omega_0)}},
\end{equation*}
where $u_{N,h}$ is the numerical solution with parameter $N$ and $h$, and $u_{exa}$ is the ``exact solution''. Note that for $k^2=5$, the CCI-method and PM-method are the same, and the results are shown in Table \ref{k5}. For $k^2=17$, the results from the CCI-method are shown in Table \ref{k17} and from the PM-method are in \ref{k17-pm}. We can see that the relative error decays as $N$ gets larger and $h$ gets smaller. Note that when $N$ is large enough (e.g., $N\geq 32$), the relative error does not decay when $N$ gets larger, this implies that the error brought by $N$ is relatively smaller, compared with the error from $h$. The decay rate of the CCI-method and the PM-method are relevant. 

\begin{table}[htb]
% table caption is above the table
\caption{Relative $L^2$-errors for $k^2=5$.}
\label{k5}       % Give a unique label
% For LaTeX tables use
\begin{tabular}{lllll}
\hline\noalign{\smallskip}
 & $h=0.02$ & $h=0.01$ & $h=0.005$ & $h=0.0025$ \\
\noalign{\smallskip}\hline\noalign{\smallskip}
$N=8$&$2.91$E$-02$&$2.78$E$-02$&$2.78$E$-02$&$2.73$E$-02$\\
$N=16$&$1.99$E$-03$&$6.63$E$-04$&$3.27$E$-04$&$2.49$E$-04$\\
$N=32$&$1.79$E$-03$&$4.53$E$-04$&$1.06$E$-04$&$3.51$E$-05$\\
$N=64$&$1.79$E$-03$&$4.53$E$-04$&$1.06$E$-04$&$3.51$E$-05$\\
\noalign{\smallskip}\hline
\end{tabular}
\end{table}

\begin{table}[htb]
% table caption is above the table
\caption{Relative $L^2$-errors for $k^2=17$-CCI-method.}
\label{k17}       % Give a unique label
% For LaTeX tables use
\begin{tabular}{lllll}
\hline\noalign{\smallskip}
  & $h=0.02$ & $h=0.01$ & $h=0.005$ & $h=0.0025$ \\
\noalign{\smallskip}\hline\noalign{\smallskip}
$N=4$&$1.05$E$-01$&$1.06$E$-01$&$1.07$E$-01$&$1.07$E$-01$\\
$N=8$&$1.99$E$-02$&$2.22$E$-02$&$2.29$E$-02$&$2.30$E$-02$\\
$N=16$&$4.67$E$-03$&$9.48$E$-04$&$6.27$E$-04$&$7.56$E$-04$\\
$N=32$&$5.24$E$-03$&$1.34$E$-03$&$3.58$E$-04$&$1.44$E$-04$\\
$N=64$&$5.24$E$-03$&$1.34$E$-03$&$3.59$E$-04$&$1.45$E$-04$\\
\noalign{\smallskip}\hline
\end{tabular}
\end{table}

\begin{table}[htb]
	% table caption is above the table
	\caption{Relative $L^2$-errors for $k^2=17$-PM-method.}
	\label{k17-pm}       % Give a unique label
	% For LaTeX tables use
	\begin{tabular}{lllll}
		\hline\noalign{\smallskip}
		& $h=0.02$ & $h=0.01$ & $h=0.005$ & $h=0.0025$ \\
		\noalign{\smallskip}\hline\noalign{\smallskip}
		$N=4$&$1.23$E$-01$&$1.24$E$-01$&$1.23$E$-01$&$1.24$E$-01$\\
		$N=8$&$9.83$E$-03$&$8.62$E$-03$&$8.51$E$-03$&$8.50$E$-03$\\
		$N=16$&$4.79$E$-03$&$1.23$E$-03$&$3.28$E$-04$&$1.38$E$-04$\\
		$N=32$&$4.79$E$-03$&$1.23$E$-03$&$3.26$E$-04$&$1.36$E$-04$\\
		$N=64$&$4.79$E$-03$&$1.23$E$-03$&$3.26$E$-04$&$1.36$E$-04$\\
		\noalign{\smallskip}\hline
	\end{tabular}
\end{table}

As the convergence rate with respect to $h$ is classical, we are especially interested in that \high{of} $N$. We fix $h=0.01$ for both cases, and compute the relative error between $u_{N,h}$ and $u_{256,h}$ for $k^2=5$, $u_{128,h}$ for $k^2=17$. From the result in \eqref{eq:err_quad}, the error is expected to decay at the rate of $O(N^{-5.5})$. From the two pictures in Figure \ref{fig:err}, \high{the convergence is} even faster than expected.

\begin{figure}[h]
\centering
\begin{tabular}{c  c}
\includegraphics[width=0.45\textwidth]{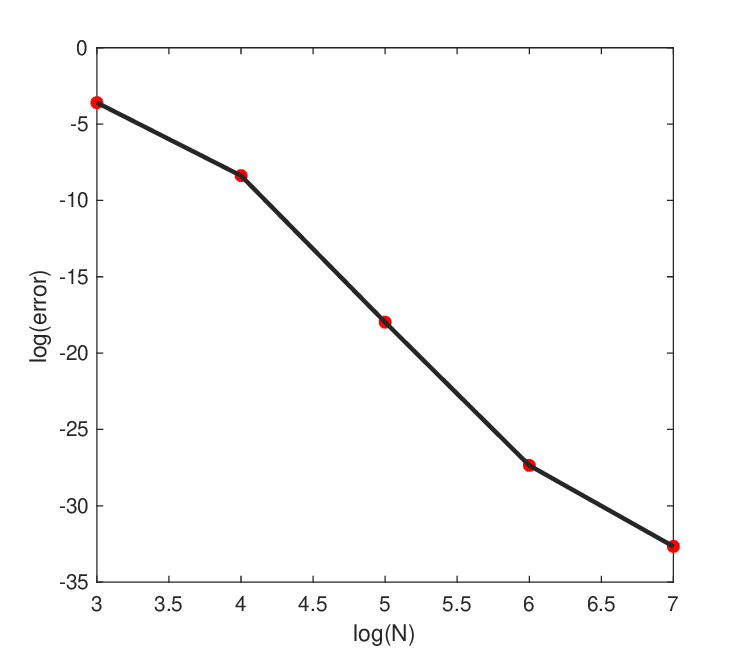} 
& \includegraphics[width=0.45\textwidth]{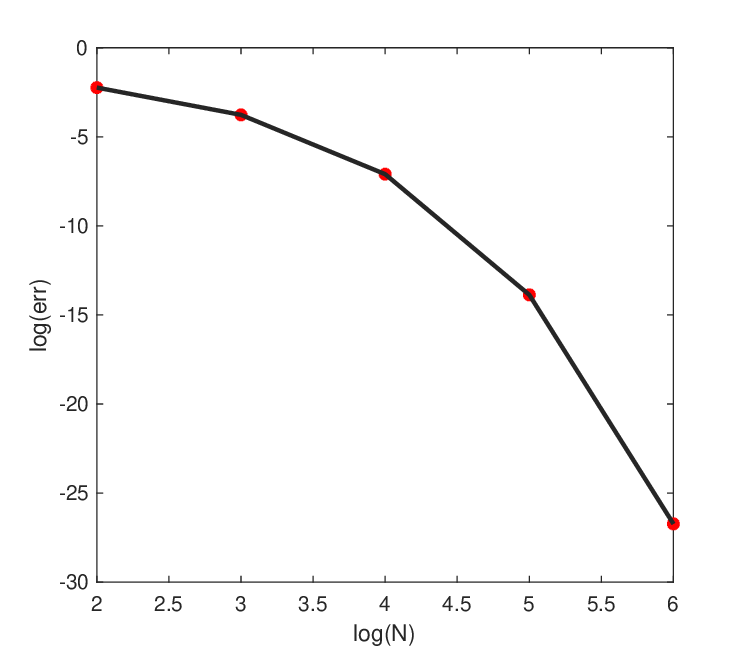}\\[-0cm]
\end{tabular}
\caption{Left: $k^2=5$; Right: $k^2=17$.}
\label{fig:err}
\end{figure}

We also compute the energy fluxes of propagating Bloch wave solutions for this example. We approximate the integrals
\[
 u_1^\pm=\frac{1}{2\pi\i}\int_{|z-\exp(\i\alpha_1^\pm)|=r}w(z,x)z^{-1}\d z
\]
with the same method, and evaluate the energy fluxes
\[
 \mathcal{E}(u_1^\pm,u_1^\pm)=4k\,\Im\left(\int_{\Gamma_1}\frac{\partial u_1^\pm}{\partial x_1}\overline{u_1^\pm}d s\right).
\]
Fix parameters $r=0.1$ and $N=16$, and the mesh size $h=0.005$. We obtain the \high{values}
\[
 \mathcal{E}(u_1^+,u_1^+)\approx 6.362\times 10^{-11},\quad \mathcal{E}(u_1^-,u_1^-)\approx -6.362\times 10^{-11}.
\]
This shows that $u_1^+$ is propagating to the right, and $u_1^-$ is propagating to the left according to the energy criteria. This also coincides with the analysis in Section 6.3.

\subsection{Half-guide problems}
\label{sec:half-guide-numer}
In the second part of this section, we show some numerical examples for half guide problems. The boundary data is given by
\begin{equation*}
 \phi(x_1,x_2)=\sin x_2+\frac{x_2^2}{2}+\exp(\i x_1)\cos(2 x_2).
\end{equation*}
For all the examples, we approximate the source term by
\begin{equation*}
 f_\gamma(x_1,x_2)\approx \sum_{j=-20}^{20}\sum_{\ell=0}^{10}\widehat{f}_{j,\ell}(\gamma)\exp(2\i\pi j x_1)\cos(\pi \ell x_2),
\end{equation*}
where $\gamma$ is the regularization parameter.
As the numerical results also depend on $\gamma$, we show different results with respect to different regularization parameters, i.e., $\gamma=10^{-2}$ and $10^{-5}$. The ``exact solutions'' also come from the method introduced in \cite{Ehrhardt2009,Ehrhardt2009a}, and the relative error $err_{N,h}$ is  defined in the same way. For both cases, the numerical solutions are computed by the CCI-method. We show the results for $k^2=5$ with parameters $N=8,16,32$ and $h=0.02,0.01,0.005$ with $\gamma=10^{-2}$ in Table \ref{k5_half_1} and with $\gamma=10^{-5}$ in Table \ref{k5_half_2}. For $k^2=17$, we show results for $N=4,8,16$ and $h=0.02,0.01,0.005$ with $\gamma=10^{-2}$ in Table \ref{k17_half_1}, and with $\gamma=10^{-5}$ in Table \ref{k17_half_2}. We also give the contour map for the solution with $k^2=17$, $\gamma=10^{-5}$, $N=16$ and $h=0.05$ in Figure \ref{fig:solution}. For all these cases, we can see that the error decays when $N$ gets larger (especially when $h$ is small) and $h$ gets smaller (especially when $N$ is large). However, the decaying rate slows down significantly when the parameter $N$ becomes sufficiently large (e.g., $N\geq 16$). This comes from the cut-off approximation of the series of $f$ and the regularization process. We also notice that the relative errors \high{corresponding} to $\gamma=10^{-2}$ is larger than that to $\gamma=10^{-5}$, which is also as expected.

\begin{figure}[h]
\centering
\begin{tabular}{c  c}
\includegraphics[width=0.45\textwidth]{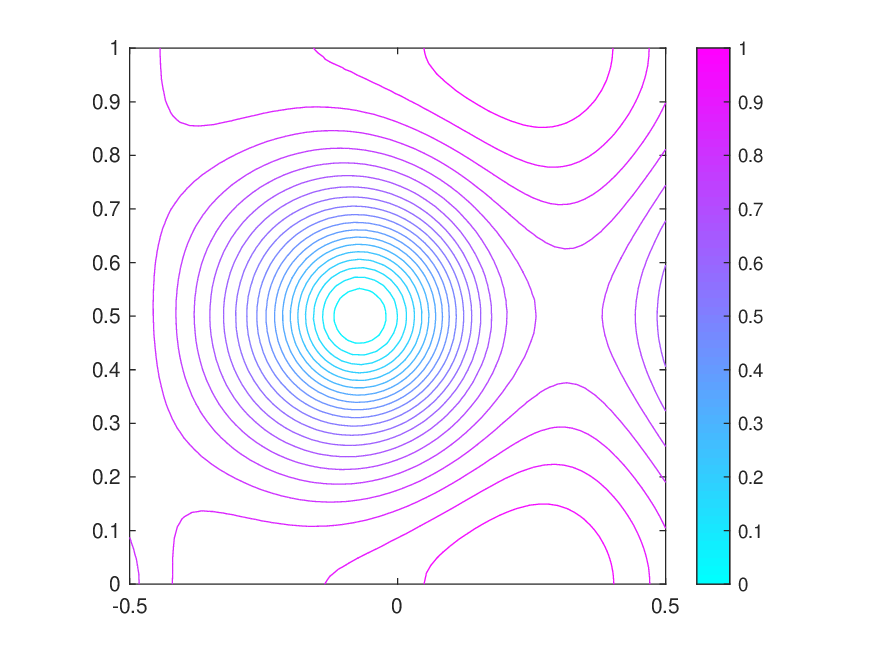} 
& \includegraphics[width=0.45\textwidth]{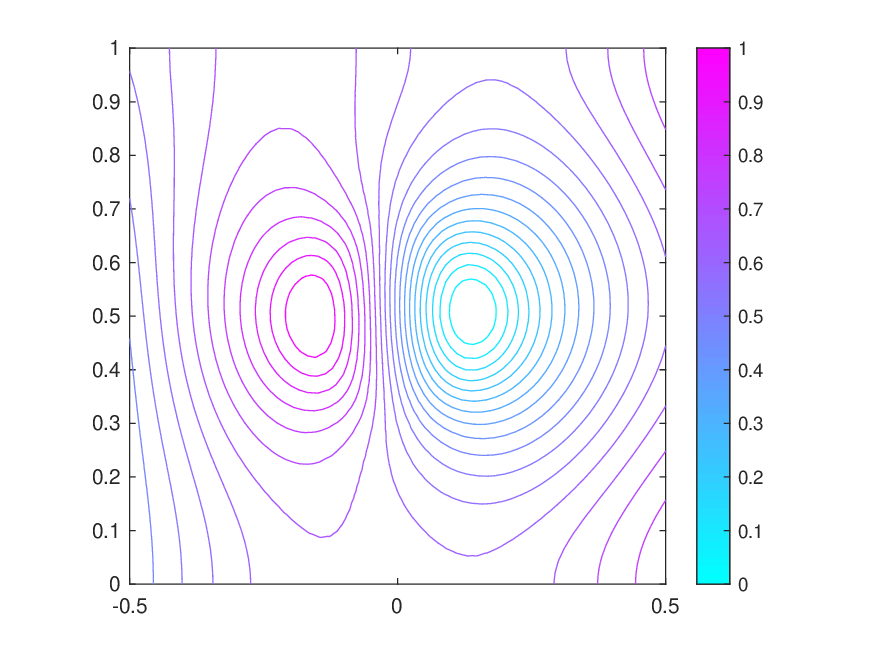}\\[-0cm]
\end{tabular}
\caption{Contour map of the solution with $k^2=17$. Left: real part of the solution; Right: imaginary part of the solution.}
\label{fig:solution}
\end{figure}

\begin{table}[htb]
% table caption is above the table
\caption{Relative $L^2$-errors for $k^2=5$, $\gamma=10^{-2}$.}
\label{k5_half_1}       % Give a unique label
% For LaTeX tables use
\begin{tabular}{llll}
\hline\noalign{\smallskip}
 & $h=0.02$ & $h=0.01$ & $h=0.005$\\
\noalign{\smallskip}\hline\noalign{\smallskip}
$N=8$&$4.24$E$-02$&$3.96$E$-02$&$3.86$E$-02$\\
$N=16$&$2.11$E$-02$&$1.25$E$-02$&$7.69$E$-03$\\
$N=32$&$2.10$E$-02$&$1.25$E$-02$&$7.66$E$-03$\\
\noalign{\smallskip}\hline
\end{tabular}
\end{table}

\begin{table}[htb]
% table caption is above the table
\caption{Relative $L^2$-errors for $k^2=5$, $\gamma=10^{-5}$.}
\label{k5_half_2}       % Give a unique label
% For LaTeX tables use
\begin{tabular}{llll}
\hline\noalign{\smallskip}
 & $h=0.02$ & $h=0.01$ & $h=0.005$\\
\noalign{\smallskip}\hline\noalign{\smallskip}
$N=8$&$3.79$E$-02$&$3.80$E$-02$&$3.80$E$-02$\\
$N=16$&$3.38$E$-03$&$9.53$E$-04$&$4.49$E$-04$\\
$N=32$&$3.34$E$-03$&$8.76$E$-04$&$3.01$E$-04$\\
\noalign{\smallskip}\hline
\end{tabular}
\end{table}

\begin{table}[htb]
% table caption is above the table
\caption{Relative $L^2$-errors for $k^2=17$, $\gamma=10^{-2}$.}
\label{k17_half_1}       % Give a unique label
% For LaTeX tables use
\begin{tabular}{llll}
\hline\noalign{\smallskip}
 & $h=0.02$ & $h=0.01$ & $h=0.005$\\
\noalign{\smallskip}\hline\noalign{\smallskip}
$N=4$&$5.14$E$-02$&$4.67$E$-02$&$4.94$E$-02$\\
$N=8$&$4.11$E$-02$&$2.09$E$-02$&$1.17$E$-02$\\
$N=16$&$4.04$E$-02$&$1.96$E$-02$&$9.42$E$-03$\\
\noalign{\smallskip}\hline
\end{tabular}
\end{table}

\begin{table}[htb]
% table caption is above the table
\caption{Relative $L^2$-errors for $k^2=17$, $\gamma=10^{-5}$.}
\label{k17_half_2}       % Give a unique label
% For LaTeX tables use
\begin{tabular}{llll}
\hline\noalign{\smallskip}
 & $h=0.02$ & $h=0.01$ & $h=0.005$\\
\noalign{\smallskip}\hline\noalign{\smallskip}
$N=4$&$6.18$E$-02$&$5.68$E$-02$&$5.58$E$-02$\\
$N=8$&$1.70$E$-02$&$7.97$E$-03$&$7.19$E$-03$\\
$N=16$&$1.55$E$-02$&$3.81$E$-03$&$1.93$E$-03$\\
\noalign{\smallskip}\hline
\end{tabular}
\end{table}

\subsection{Special wavenumbers}
\label{sec:special-numer}

In this section, we show some numerical results when Assumption \ref{asp2} is not \high{satisfied}, i.e., \high{$S_+\cap S_-\neq\emptyset$}.
From the dispersion diagram, i.e., Figure \ref{fig:disp0}, when $k^2=9.6663(\pm 0.01)$, Assumption \ref{asp2} is not satisfied. Thus the method introduced in Section \ref{sec:special} is used for the numerical simulation. From the dispersion curve, $Q'=Q=2$,  \high{$\alpha_1^+=\alpha_1^-=-1.8333(\pm 0.01)$ and $\alpha_2^+=\alpha_2^-=1.8333(\pm 0.01)$}. Let the curve $\mathcal{C}_0:=\big\{z\in\C:\,|z|=0.8\big\}$, and $B_j^\pm$ ($j=1,2$) be defined by \eqref{eq:ball} with $\delta_0=0.1$. For the visualization of the points and curves we refer to Figure \ref{fig:disp0}.

\begin{figure}[ht]
\centering
\includegraphics[width=14cm,height=5.5cm]{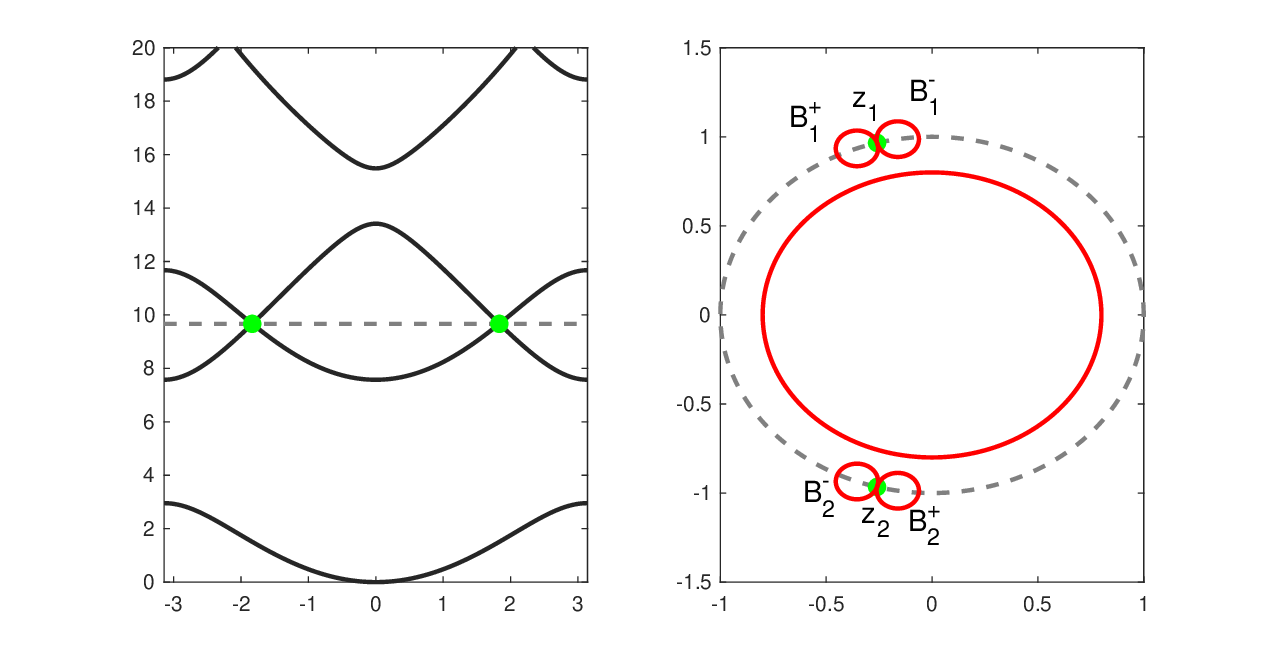}
\caption{$k^2=9.6663(\pm 0.01)$. Left: dispersion diagram ; right: $z$-space.}
\label{fig:disp0}
\end{figure}

For the CCI-method, we choose two different interpolation strategies to carry out the numerical approximation. For the first strategy, set $M=3$, and 
\[k_1^2=k^2-0.1,\,k_2^2=k^2+0.1,\,k_3^2=k^2+0.2.\]
For the second one, $M=5$ and
\[k_1^2=k^2-0.2,\,k_2^2=k^2-0.1,\,k_3^2=k^2+0.1,\,k_4^2=k^2+0.2,\,k_5^2=k^2+0.3.\]
We still use the result obtained by the method introduced in \cite{Ehrhardt2009,Ehrhardt2009a} to produce the ``exact solution'', and compute the relative errors with the parameters $h=0.02,0.01,0.005,0.0025$ and $N=16,32,64,128$. The results are shown in Table \ref{k_special_2}-\ref{k_special_4}. In both tables, the relative errors with these two different strategies are similar, and the error decays when $N$ gets larger and $h$ gets smaller. These results show that the CCI-method for special numbers is convergent.

\high{
We also used the PM-method to approximate the LAP solution with this special wavenumber. We adopt the same parameters and the results are shown in Table \ref{k_special_pm}. From the results, the relative errors decay as $N$ gets larger and $h$ gets smaller. For fixed $h$, the decay becomes slower when $N>64$; for fixed $N$, the error even becomes larger when $h<0.5$. This may come from the error of the eigs function from Matlab. 
}

\begin{table}[htb]
% table caption is above the table
\caption{Relative $L^2$-errors for $k^2=9.6663(\pm 0.01)$: CCI-method-strategy 1.}
\label{k_special_2}       % Give a unique label
% For LaTeX tables use
\begin{tabular}{lllll}
\hline\noalign{\smallskip}
  & $h=0.02$ & $h=0.01$ & $h=0.005$ & $h=0.0025$\\
\noalign{\smallskip}\hline\noalign{\smallskip}
$N=16$&$3.02$E$-01$&$3.05$E$-01$&$3.06$E$-01$&$3.06$E$-01$\\
$N=32$&$3.46$E$-02$&$3.40$E$-02$&$3.38$E$-02$&$3.37$E$-02$\\
$N=64$&$3.61$E$-03$&$1.81$E$-03$&$1.55$E$-03$&$1.51$E$-03$\\
$N=128$&$2.85$E$-03$&$6.57$E$-04$&$1.74$E$-04$&$1.67$E$-04$\\
\noalign{\smallskip}\hline
\end{tabular}
\end{table}

\begin{table}[htb]
% table caption is above the table
\caption{Relative $L^2$-errors for $k^2=9.6663(\pm 0.01)$: CCI-method-strategy 2.}
\label{k_special_4}       % Give a unique label
% For LaTeX tables use
\begin{tabular}{lllll}
\hline\noalign{\smallskip}
  & $h=0.02$ & $h=0.01$ & $h=0.005$ & $h=0.0025$\\
\noalign{\smallskip}\hline\noalign{\smallskip}
$N=16$&$2.89$E$-01$&$2.91$E$-01$&$2.92$E$-01$&$2.92$E$-01$\\
$N=32$&$3.52$E$-02$&$3.50$E$-02$&$3.48$E$-02$&$3.47$E$-02$\\
$N=64$&$3.62$E$-03$&$1.82$E$-03$&$1.56$E$-03$&$1.52$E$-03$\\
$N=128$&$2.85$E$-03$&$6.49$E$-04$&$1.61$E$-04$&$1.55$E$-04$\\
\noalign{\smallskip}\hline
\end{tabular}
\end{table}

\begin{table}[htb]
	% table caption is above the table
	\caption{Relative $L^2$-errors for $k^2=9.6663(\pm 0.01)$: PM-method.}
	\label{k_special_pm}       % Give a unique label
	% For LaTeX tables use
	\begin{tabular}{lllll}
		\hline\noalign{\smallskip}
		& $h=0.02$ & $h=0.01$ & $h=0.005$ & $h=0.0025$\\
		\noalign{\smallskip}\hline\noalign{\smallskip}
		$N=16$&$3.64$E$-01$&$3.62$E$-01$&$3.62$E$-01$&$3.62$E$-01$\\
		$N=32$&$3.50$E$-02$&$3.33$E$-02$&$3.24$E$-02$&$3.25$E$-02$\\
		$N=64$&$4.83$E$-03$&$4.57$E$-03$&$1.84$E$-03$&$3.94$E$-03$\\
		$N=128$&$4.49$E$-03$&$4.30$E$-03$&$1.07$E$-03$&$3.65$E$-03$\\
		\noalign{\smallskip}\hline
	\end{tabular}
\end{table}

We also check the energy fluxes \high{corresponding} to the propagating modes. The approximation is carried out with the help of the second strategy, with parameters $h=0.005$ and $N=64$. The energy fluxes \high{corresponding} to $u_1^\pm$ and $u_2^\pm$ are evaluated as follows:
\[
\begin{aligned}
 &\mathcal{E}(u_1^+,u_1^+)\approx 3.009\times 10^{-10},\quad \mathcal{E}(u_1^-,u_1^-)\approx -3.009\times 10^{-10},\\
 & \mathcal{E}(u_2^+,u_2^+)\approx 0.0399,\quad \mathcal{E}(u_2^-,u_2^-)\approx -0.0399.
 \end{aligned}
\]
From the values, $u_1^+$ and $u_2^+$ are propagating to the right, $u_1^-$ and $u_2^-$ are propagating to the left. This coincides with the results shown in Section 6.3.

\subsection{Conclusion}

Now we compare the two different methods -- the CCI-method and the PM-method. The CCI-method depends on a simplified integral representation \eqref{eq:u_LAP} for the LAP solution with Assumption \ref{asp1}, where a suitable complex integral curve is to be designed specially depending on the behaviour of the Floquet multipliers with respect to the absorbing parameter $\epsilon$. Then the LAP solution is approximated by the sum of finite number of solutions of quasi-periodic problems, which are all well-posed. However, to know the behaviour of the poles, for example by producing a dispersion diagram, may take a relatively longer time. When Assumption \ref{asp1} no longer holds, an interpolation technique is introduced to make the CCI-method suitable for this situation. This makes the CCI-method more complicated. On the other hand, the PM-method is based on the curve integral with finite number of non-selfadjoint eigenvalue problem. Compared to the CCI-method, it does not depend on Assumption \ref{asp1}. We can decide if a Floquet mode is acceptable by the sign of the energy flux, so we do not need to know the dispersion curve in principle. However, as the non-selfadjoint eigenvalue problems have complex eigenvalues, sometimes it may not be easy to find out all real eigenvalues in $(-\pi,\pi]$. The safest way is to find out a rough guess of the eigenvalues from the dispersion diagram first, then find the eigenvalues nearest to the initial guess. Moreover, we still need to solve more eigenvalue problems to evaluate $\mu'(\alpha)$.  %\high{convergence -- eigs function in matlab}

We also compare the methods introduced in this paper with other methods. The computational complexity of both methods are equivalent to that of \cite{Zhang2017e}. The problems are different, for the Floquet-Bloch transformed field  has finite number of poles in this paper, but one or two branch cuts in \cite{Zhang2017e}. The methods introduced in \cite{Joly2006,Fliss2009a,Coatl2012} are based on the numerical evaluation of the DtN maps, which are described by the quadratic characteristic equation. The evaluations are carried out by the iteration method based on the cell problem, thus this may involve many times of the solutions of quasi-periodic problem. Another interesting method is introduced in \cite{Dohna2018}, by approximating the LAP solution by finite number of propagating modes and a truncated problem. Suppose the truncated problem exists in the cells from $-N$ to $N$, and the degree of freedom is $M$ in one cell, then the degree of freedom for the whole problem is greater than $NM$. Thus they have to solve a system of at least $NM\times NM$. However, for our problem we only need to solve several times of $M\times M $ linear system, which is much more efficient. Due to the super algebraic convergence, we do not need to solve the $M\times M $ linear system too many times.  Thus our method is faster than the one introduced in this paper.

\section*{Appendix}

\begin{proof}[Proof of Lemma \ref{th:accumulation}]
Assume that the set has a bounded accumulation point $k_0\in\R_+$, i.e., there is a sequence $k_n$ such that 
\begin{equation*}
P_+(k_n)\cap P_-(k_n)\neq\emptyset,\quad \lim_{n\rightarrow\infty}k_n=k_0.
\end{equation*}
Thus the sequence has a strictly monotones subsequence which also converges to $k_0$. Without loss of generality, we assume that the subsequence is monotonically decreasing and is still denoted by $k_n$, i.e.,
\begin{equation*}
k_0^2<\cdots<k_n^2<k_{n-1}^2<\cdots<k_1^2.
\end{equation*}
This implies that  for any $n\in\N$, there is a pair $(i_n\,,j_n)\in\N\times\N$ such that
\begin{equation*}
\exists\, \alpha_n\in(-\pi,\pi],\,s.t.,\,\mu_{i_n}(\alpha_n)=\mu_{j_n}(\alpha_n)=k^2_n
\end{equation*} 
that satisfies
\begin{equation*}
\mu_{i_n}'(\alpha_n)>0,\quad
\mu_{j_n}'(\alpha_n)<0.
\end{equation*}
 As $\lim_{n\rightarrow\infty}\mu_n(\alpha)=\infty$ for any $\alpha\in(-\pi,\pi]$, there should be a  subsequence of pairs $(i_n,\,j_n)$ such that $i_n=i_0$ and $j_n=j_0$ where $i_0$ and $j_0$ are two constant positive integers. Still denote the subsequence of $k_n^2$ by $k_n^2$, then for any $n\in\N$, there is an $\alpha_n\in(-\pi,\pi]$ such that
\begin{equation*}
\mu_{i_0}(\alpha_n)=\mu_{j_0}(\alpha_n)=k^2_n,\quad\mu'_{i_0}(\alpha_n)>0,\,\mu'_{j_0}(\alpha_n)<0.
\end{equation*}

Define the function
\begin{equation*}
\mu(\alpha):=\mu_{i_0}(\alpha)-\mu_{j_0}(\alpha),
\end{equation*}
then there is a sequence $\alpha_n\in(-\pi,\pi]$ such that
\begin{equation*}
\mu(\alpha_n)=0,\quad\forall n\in\N.
\end{equation*}
As $\mu_{i_0}$ and $\mu_{j_0}$ are both analytic functions, $\mu$ is analytic as well. Thus either $\mu$ is a constant function equals to $0$, or  $\alpha_n=\alpha_0$ except for a finite number of $n$'s.

For the first case, $\mu'(\alpha)=0$ for any $\alpha_n$, which contradicts with 
\begin{equation*}
\mu'(\alpha_n)=\mu'_{i_0}(\alpha_n)-\mu'_{j_0}(\alpha_n)>0.
\end{equation*}
 For the second case, suppose there is an $N>>1$ such that $\alpha_n=\alpha_0$ for any $n\geq N$, then $\mu_{i_0}(\alpha_n)=\mu_{j_0}(\alpha_n)=k^2_n$ implies that $k^2_n=k^2_0$ for any $n\geq N$. This contradicts with the monotone decreasing property. Thus $k_0^2$ can not be an accumulation point, the proof is finished.

\end{proof}

\section*{Acknowledgments}
The work  is funded by the Deutsche Forschungsgemeinschaft (DFG, German Research Foundation) – Project-ID 258734477 – SFB 1173
% For tables use

%\begin{acknowledgements}
%If you'd like to thank anyone, place your comments here
%and remove the percent signs.
%\end{acknowledgements}

% Authors must disclose all relationships or interests that 
% could have direct or potential influence or impart bias on 
% the work: 
%
% \section*{Conflict of interest}
%
% The authors declare that they have no conflict of interest.

% BibTeX users please use one of
%\bibliographystyle{spbasic}      % basic style, author-year citations
%\bibliographystyle{spmpsci}      % mathematics and physical sciences
%\bibliographystyle{spphys}       % APS-like style for physics
%\bibliography{}   % name your BibTeX data base

% Non-BibTeX users please use

\end{document}